\definecolor{dkgreen}{rgb}{0,0.6,0}
\definecolor{gray}{rgb}{0.5,0.5,0.5}
\definecolor{mauve}{rgb}{0.58,0,0.82}
\tiny\color{gray},
\newtheorem{theorem}{Theorem}[section]
\newtheorem{lemma}[theorem]{Lemma}
\newtheorem{proposition}[theorem]{Proposition}
\newtheorem{corollary}[theorem]{Corollary}
\theoremstyle{definition}
\newtheorem{definition}[theorem]{Definition}
\newtheorem{example}[theorem]{Example}
\theoremstyle{remark}
\newtheorem{remark}[theorem]{Remark}
\numberwithin{equation}{section}
\numberwithin{figure}{section}
\newcommand{\TT} {\mathbb{T}}
\newcommand{\ZZ} {\mathbb{Z}}
\newcommand{\RR} {\mathbb{R}}
\newcommand{\CC} {\mathbb{C}}
\newcommand{\PP} {\mathbb{P}}
\newcommand {\shB}  {\mathcal{B}}
\newcommand {\shC}  {\mathcal{C}}
\newcommand {\shF}  {\mathcal{F}}
\newcommand {\shG}  {\mathcal{G}}
\newcommand {\shI}  {\mathcal{I}}
\newcommand {\shJ}  {\mathcal{J}}
\newcommand {\shT}  {\mathcal{T}}
\newcommand {\shX}  {\mathcal{X}}
\newcommand {\foU}  {\mathfrak{U}}
\newcommand {\Aff}  {\operatorname{Aff}}
\newcommand{\cycleD} {\widetilde{D}}
\newcommand {\F} {\mathcal{F}}
\newcommand {\GL}  {\operatorname{GL}}
\newcommand {\Hom}  {\operatorname{Hom}}
\newcommand {\lra}  {\longrightarrow}
\newcommand {\M} {\mathcal{M}}
\renewcommand{\P}  {\mathscr{P}}
\newcommand {\Spec} {\operatorname{Spec}}
\newcommand {\Square}  {\operatorname{Sq}}
\renewcommand{\tilde}{\widetilde}
\def\mydate{\ifcase\month \or January\or February\or March\or
April\or May\or June\or July\or August\or September\or October\or 
November\or December\fi \space\number\day,\space\number\year}
\newcommand\restr[2]{{% we make the whole thing an ordinary symbol
  \left.\kern-\nulldelimiterspace % automatically resize the bar with \right
  #1 % the function
  \vphantom{\big|} % pretend it's a little taller at normal size
  \right|_{#2} % this is the delimiter
  }}
\begin{document}

\title[Real Lagrangian]{Real Lagrangians in Calabi--Yau Threefolds}

\author{H\"ulya Arg\"uz}
\address{Laboratoire de Math\'ematiques de Versailles\\UVSQ--Paris Saclay\\55 Avenue de Paris, 78000 \\ France}
\email{nuromur-hulya.arguz@uvsq.fr}

\author{Thomas Prince}
\address{Mathematical Institute\\University of Oxford\\Woodstock Road\\Oxford\\OX$2$ $6$GG\\UK}
\email{thomas.prince@magd.ox.ac.uk}

\date{\today}

\begin{abstract}
	We compute the mod $2$ cohomology groups of real Lagrangians in torus fibrations on Calabi--Yau threefolds constructed by Gross. To do this we study a long exact sequence introduced by Casta\~{n}o-Bernard--Matessi, which relates the cohomology of the Lagrangians to the cohomology of the Calabi--Yau. We show that the connecting homomorphism in this sequence is given by the map squaring divisor classes in the mirror Calabi--Yau.
\end{abstract}
\maketitle

\tableofcontents
%\bigskip

% Input source files for each section
% !TEX root = paper.tex
%----------------------------------------------------------------------
\section{Introduction}
\label{sec:intro}
%----------------------------------------------------------------------

The celebrated Strominger--Yau--Zaslow (SYZ) conjecture \cite{SYZ} relates the complex geometry of a Calabi--Yau threefold with \emph{integral affine geometry} on a three-dimensional real manifold, the base of a conjectural special Lagrangian torus fibration. Although the construction of special Lagrangian submanifolds is a notoriously difficult problem of geometric analysis, this perspective has nonetheless given rise to substantial progress in understanding  mirror symmetry.

In one direction, one can weaken the conditions on the torus fibration: Ruan~\cite{Ruan,Ruan2,Ruan3}, Gross~\cite{GrossSLagI,GrossTopology,GrossBB}, and Haase--Zharkov~\cite{HZci} have constructed integral affine manifolds together with topological or Lagrangian torus fibrations on Calabi--Yau toric complete intersections, compatible with the mirror pairs constructed by Batyrev and Borisov \cite{B,BB}. In another direction, one can consider \emph{toric degenerations} of Calabi--Yau varieties, introduced by Gross--Siebert\cite{GS1}, and obtain torus fibrations on the general fibre, parametrised by choices of \emph{gluing data}, see \cite[\S$4$]{AS}. Such torus fibrations carry a canonical anti-holomorphic involution, which is anti-symplectic in the projective case, as discussed in Appendix~\ref{sec:Kato-Nakayama}, where we also review the study of fixed loci of such involutions carried out in the work of Siebert and the first named author~\cite{AS}.

In this article, we restrict our attention to torus fibrations on topological Calabi--Yau compactifications introduced by Gross~\cite{GrossTopology,GrossSLagI}. Here the base $B$ carries the structure of an integral affine manifold with simple singularities, as discussed in \S\ref{sec:integral_affine_manifolds}, and is homeomorphic to a $\ZZ_2$-homology sphere. For us, the most important condition that these fibrations satisfy is \emph{$G$-simplicity} for an abelian group $G$; if $f\colon X \to B$ is the fibration, and $f_0$ is the restriction of $f$ to the complement of the discriminant locus in $B$, then $G$-simplicity is statement that
\[
\iota_\star R^p {f_0}_\star G = R^p f_\star G 
\]
for all $p$ \cite[Definition $2.1$]{GrossSLagI}. Gross \cite{GrossTopology} has shown that three-dimensional $G$-simple torus fibrations, for appropriate $G$ (for instance $G = \ZZ$,  or $G = \ZZ_n$), admit canonical dual fibrations which are also $G$-simple. This is compatible with expectations from SYZ mirror symmetry. Among the topological Calabi--Yau compactifications that admit $G$-simple torus fibrations is one of most extensively studied examples in the literature: the quintic threefold $X \subset \PP^4$ \cite[Theorem~$0.2$]{GrossTopology}.

Casta\~no-Bernard--Matessi \cite{CBM3} have constructed Lagrangian torus fibrations on Calabi--Yau compactifications. In their setting there is a canonical anti-symplectic involution on the total space $X$ of the compactification \cite{CBM, CBMS}; the fixed point locus of this involution is a real Lagrangian submanifold $L_\RR \subset X$. There is an extensive study of real Lagrangians in the literature, as they provide an algebro-geometric path to open Gromov--Witten invariants and the Fukaya category. For previous work in this direction, see \cite{FOOO, Solomon, PSW, Penka}. 

Given a Lagrangian torus fibration $f \colon X \to B$ as in \cite{CBM3}, let $\pi \colon L_\RR \to B$ denote the restriction of $f$ to the real Lagrangian $L_\RR$. As observed in \cite{CBM}, there is a short exact sequence of sheaves
\[
\label{eq:ses_intro}
0 \to R^1f_\star\ZZ_2\oplus\ZZ^2_2 \to \pi_\star\ZZ_2 \to R^2f_\star\ZZ_2 \to 0
\]
and the $\ZZ_2$-cohomology of $L_\RR$ is determined by the connecting homomorphism
\[
\beta \colon H^1(B, R^2f_\star\ZZ_2) \to H^2(B, R^1f_\star\ZZ_2)
\]
in the associated long exact sequence. The computation of $\beta$ was stated as an open problem in \cite[\S$1$]{CBM}. In this article we resolve this problem, and give an explicit description of $\beta$ in terms of the pairing on the cohomology of the mirror $\breve{X}$. Our main result is the following, which appears in the article as Theorem~\ref{thm:beta_is_square}.

\begin{theorem}
	The connecting homomorphism $\beta$ in the long exact sequence \eqref{eq:les_CBM} coincides with the map
\begin{eqnarray}
\nonumber
\Square \colon H^1(B,R^1\breve{f}_\star\ZZ_2) & \lra & H^2(B,R^2\breve{f}_\star\ZZ_2) \\
\nonumber
D & \longmapsto & D^2.
\end{eqnarray}	
\end{theorem}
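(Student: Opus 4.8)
The plan is to unwind the connecting homomorphism $\beta$ into explicit extension data and then to match it, over the base $B$, with a cup‑product description of $\Square$. First I would record that $\beta$ is the connecting homomorphism of the long exact sequence \eqref{eq:les_CBM}, hence it is determined by the class of the Casta\~no-Bernard--Matessi short exact sequence of sheaves $0\to R^1f_\star\ZZ_2\oplus\ZZ_2^2\to\pi_\star\ZZ_2\to R^2f_\star\ZZ_2\to0$ in $\Ext^1_B\bigl(R^2f_\star\ZZ_2,\,R^1f_\star\ZZ_2\oplus\ZZ_2^2\bigr)$, and is computed by the usual \v{C}ech recipe: lift a cocycle representing a class of $H^1(B,R^2f_\star\ZZ_2)$ through $\pi_\star\ZZ_2$ and take its coboundary. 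On the mirror side I would rewrite $\Square$ as the composite of the diagonal cup‑square $H^1(B,R^1\breve f_\star\ZZ_2)\to H^2\bigl(B,(R^1\breve f_\star\ZZ_2)^{\otimes2}\bigr)$ with the coefficient map $(R^1\breve f_\star\ZZ_2)^{\otimes2}\to R^2\breve f_\star\ZZ_2$ induced by the fibrewise cup product of $\breve f$; via the mirror identifications $R^2f_\star\ZZ_2\cong R^1\breve f_\star\ZZ_2$ and $R^1f_\star\ZZ_2\cong R^2\breve f_\star\ZZ_2$, coming from Gross's duality \cite{GrossTopology} together with Poincar\'e duality on the torus fibres, this is exactly the map $D\mapsto D^2$. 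The theorem then becomes the statement that the Casta\~no-Bernard--Matessi extension coincides with the (shifted) extension underlying the cup‑square.

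To establish this I would work locally over $B$ and glue. Away from the discriminant $\Delta\subset B$ the fibre of $\pi$ is the fixed set of the fibrewise involution, a copy of $\frac12\Lambda/\Lambda\cong\Lambda/2\Lambda$, so $\pi_\star\ZZ_2$ is the permutation sheaf of this $\FF_2$‑vector space; the filtration of the group ring $\ZZ_2[\frac12\Lambda/\Lambda]$ by powers of its augmentation ideal has associated graded $\bigoplus_p\bigwedge^p(\Lambda/2\Lambda)$, which already presents $\pi_\star\ZZ_2$ over $B\minus\Delta$ in terms of the mod‑$2$ monodromy alone --- precisely the data out of which $\bigoplus_p R^p\breve f_\star\ZZ_2$ and the fibrewise cup product are built --- and the resulting extension is the universal non‑split one. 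Near a generic edge of the trivalent graph $\Delta$ the monodromy is a single transvection, and near the positive and negative trivalent vertices it is the standard triple of transvections attached to Gross's elementary building blocks; for each local model I would write $\pi_\star\ZZ_2$ explicitly, compare it with $\bigoplus_p R^p\breve f_\star\ZZ_2$ using Gross's duality (which matches the local polyhedral data, hence the mod‑$2$ local systems and their monodromies, for $f$ and $\breve f$), and verify that the clutching data of the two extensions agree. A \v{C}ech/Mayer--Vietoris argument over a cover of $B$ subordinate to the polyhedral decomposition then assembles the local isomorphisms into the global equality $\beta=\Square$, the hypothesis that $B$ is a $\ZZ_2$‑homology sphere keeping the gluing and the \v{C}ech complexes involved under control.

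I expect the main obstacle to be the local analysis at the negative vertex, where the real fibre degenerates most severely and the topology of $L_\RR$ is least transparent: one must extract the clutching data of $\pi_\star\ZZ_2$ there and match it --- under Gross's duality, and against the manifestly asymmetric Alexander--Whitney form of the fibrewise cup product --- with the product structure on $\bigoplus_p R^p\breve f_\star\ZZ_2$, checking in particular that no correction term valued in the $\ZZ_2^2$ summand intervenes; equivalently, that the contribution of the real structure to $H^\ast(L_\RR,\ZZ_2)$ is captured by $\Square$ on the nose, not merely up to lower‑order terms. A secondary but unavoidable subtlety is the bookkeeping of the orientation and determinant twists relating $R^2f_\star\ZZ_2$, $\bigwedge^2R^1\breve f_\star\ZZ_2$ and $R^1\breve f_\star\ZZ_2$: these are invisible mod $2$ but must be pinned down for the fibrewise product to be the map into $R^2\breve f_\star\ZZ_2$ appearing in the statement, and a sign error here would shift the connecting homomorphism by a genuine cohomology class.
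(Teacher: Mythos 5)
Your high-level frame is the right one --- compute $\beta$ on \v{C}ech cochains via the snake lemma and compare it with the \v{C}ech (Alexander--Whitney) representative of the cup square under the dualities $R^2f_\star\ZZ_2\cong R^1\breve{f}_\star\ZZ_2$ and $R^1f_\star\ZZ_2\cong R^2\breve{f}_\star\ZZ_2$ --- but the step that actually proves the theorem is absent, and the work you propose to do instead is concentrated in the wrong place. The comparison happens on triple overlaps $U_{i,j,k}$, and one may (the paper does) choose the Leray cover so that no triple overlap meets $\Delta$; there all the sheaves are constant, so no analysis of the local models at positive or negative vertices, and no matching of ``clutching data'', is required --- by $\ZZ_2$-simplicity the only role of the monodromy is to cut out the invariant subspaces of sections over double overlaps. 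What carries the proof is one linear-algebra identity: a \v{C}ech $1$-cocycle satisfies $\alpha_{i,j}+\alpha_{j,k}+\alpha_{i,k}=0$, so these classes span a subspace $W\subset V=H^2(\TT^3,\ZZ_2)$ of dimension at most $2$; lifting each $\alpha_{i,j}$ to the indicator function $1_{\{\alpha_{i,j}\}}\in\ZZ_2^V$ and taking the coboundary yields $1_{W\setminus\{0\}}$ in the nondegenerate case (and $1_{\{0\}}$ otherwise), which modulo $\langle 1_{\{0\}},1_V\rangle$ equals $1_{V\setminus W}$, i.e.\ precisely the nonzero linear functional annihilating $W$. On the mirror side the Alexander--Whitney formula gives $(\cycleD^2)_{i,j,k}=\cycleD_{i,j}\cycleD_{j,k}+\cycleD_{j,k}\cycleD_{i,k}+\cycleD_{i,k}\cycleD_{i,j}=e_1e_2$ when the three entries are $e_1$, $e_2$, $e_1+e_2$, which under Poincar\'e duality on the fibre is the same annihilator. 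Without this computation the equality $\beta=\Square$ is asserted rather than proved.

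There is also a structural problem with the ``compare two extension classes'' strategy: $\Square$ is presented as a cup product, not as the connecting homomorphism of any exhibited short exact sequence, so ``verify that the clutching data of the two extensions agree'' is not yet a well-posed task --- you would first have to construct an extension whose connecting map is the cup square, and doing so is essentially the cochain computation above in disguise. Relatedly, the assertion that the augmentation-filtration extension is ``the universal non-split one'' does not identify its class in $\Ext^1_B(R^2f_\star\ZZ_2,R^1f_\star\ZZ_2)$, which is exactly what determines $\beta$. Two of your flagged worries do dissolve easily, though you should say why: the $\ZZ_2^2$ summand splits off globally via $g\mapsto\big(\sum_{v\in V}g(v),\sum_{v\neq 0}g(v)\big)$, so no correction term valued in it can contribute to $\beta$; and all orientation and determinant twists are indeed invisible with $\ZZ_2$ coefficients, so they cannot shift the answer.
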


If $H^1(X,\ZZ_2) = 0$, the map $\Square$ in the above theorem coincides with the usual cup product in cohomology. As a consequence, we obtain the following result as Corollary~\ref{cor:mod2_cohomology_again}. 
\begin{corollary}
	If $H^1(\breve{X},\ZZ_2) = 0$, then
	\[
	h^1(L_\RR,\ZZ_2) = h^1(B,R^1f_\star\ZZ_2) + \dim\ker(\Square ).
	\]
	Moreover if, in addition, $H^2(\breve{X},\ZZ) \cong \ZZ$, $H^3(\breve{X},\ZZ)$ contains no $2$-torsion, and $H^1(X,\ZZ_2) = 0$, then
	\begin{equation}
	\nonumber
h^1(L_\RR,\ZZ_2) = h^1(B,R^1f_\star\ZZ_2) + \delta.
\end{equation}
Here 
\[
\delta = 
\begin{cases}
1 & \text{if $\overline{D}^3$ is divisible by $2$} \\
0 & \text{otherwise,}
\end{cases}
\]
where $\overline{D}$ is a generator of $H^2(\breve{X},\ZZ)$.
\end{corollary}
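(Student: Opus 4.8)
The strategy is to feed Theorem~\ref{thm:beta_is_square} into the long exact cohomology sequence of the Casta\~no-Bernard--Matessi short exact sequence of sheaves $0\to R^1f_\star\ZZ_2\oplus\ZZ_2^2\to\pi_\star\ZZ_2\to R^2f_\star\ZZ_2\to 0$ from \S\ref{sec:intro}, and then to reinterpret the connecting map through the Leray spectral sequence of the mirror fibration $\breve f$. I would use freely that $B$ is a closed $3$-manifold with $H^1(B,\ZZ_2)=H^2(B,\ZZ_2)=0$, that $\pi$ is finite so that $H^k(L_\RR,\ZZ_2)\cong H^k(B,\pi_\star\ZZ_2)$, and Gross's duality for three-dimensional $G$-simple torus fibrations \cite{GrossTopology}, which identifies $R^qf_\star\ZZ_2$ with $R^{3-q}\breve f_\star\ZZ_2$ (with $R^0,R^3$ the constant sheaf $\ZZ_2$).

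For the first formula, the constant summand $\ZZ_2^2$ drops out of the long exact sequence in degrees $1$ and $2$, leaving the exact sequence
\[
\begin{gathered}
H^0(B,R^2f_\star\ZZ_2)\xrightarrow{\ \beta_0\ }H^1(B,R^1f_\star\ZZ_2)\to H^1(L_\RR,\ZZ_2)\to{}\\
H^1(B,R^2f_\star\ZZ_2)\xrightarrow{\ \beta_1\ }H^2(B,R^1f_\star\ZZ_2).
\end{gathered}
\]
The leftmost term is $H^0(B,R^2f_\star\ZZ_2)\cong H^0(B,R^1\breve f_\star\ZZ_2)$, which by the five-term exact sequence of the Leray spectral sequence of $\breve f$ (and $H^1(B,\ZZ_2)=H^2(B,\ZZ_2)=0$) is isomorphic to $H^1(\breve X,\ZZ_2)$, hence vanishes by hypothesis. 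So $\beta_0=0$, and the sequence collapses to $0\to H^1(B,R^1f_\star\ZZ_2)\to H^1(L_\RR,\ZZ_2)\to\ker\beta_1\to 0$, giving
\[
h^1(L_\RR,\ZZ_2)=h^1(B,R^1f_\star\ZZ_2)+\dim\ker\beta_1 .
\]
By Theorem~\ref{thm:beta_is_square}, together with the duality identifications $R^2f_\star\ZZ_2\cong R^1\breve f_\star\ZZ_2$ and $R^1f_\star\ZZ_2\cong R^2\breve f_\star\ZZ_2$, the map $\beta_1$ is $\Square\colon H^1(B,R^1\breve f_\star\ZZ_2)\to H^2(B,R^2\breve f_\star\ZZ_2)$, which proves the first identity.

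For the second formula I would identify $\Square$ with the cup-square map on $\breve X$ by analysing the Leray spectral sequence $E_2^{p,q}=H^p(B,R^q\breve f_\star\ZZ_2)\Rightarrow H^{p+q}(\breve X,\ZZ_2)$. The bottom and top rows are $H^\bullet(B,\ZZ_2)$, equal to $\ZZ_2,0,0,\ZZ_2$ in degrees $0,1,2,3$. Moreover $E_2^{0,1}\cong H^1(\breve X,\ZZ_2)=0$, while $E_2^{0,2}=H^0(B,R^2\breve f_\star\ZZ_2)\cong H^0(B,R^1f_\star\ZZ_2)\cong H^1(X,\ZZ_2)=0$ by the extra hypothesis $H^1(X,\ZZ_2)=0$; and Poincar\'e duality on $B$ with constructible coefficients, combined with fibrewise Poincar\'e duality on the $3$-torus fibres (which identifies $(R^q\breve f_\star\ZZ_2)^\vee$ with $R^{3-q}\breve f_\star\ZZ_2$), gives $E_2^{3,1}\cong H^0(B,R^2\breve f_\star\ZZ_2)^\vee=0$, while $E_2^{1,3}=H^1(B,\ZZ_2)=0$ directly. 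Hence the only terms that can contribute to $H^2(\breve X,\ZZ_2)$ and $H^4(\breve X,\ZZ_2)$ are $E_2^{1,1}=H^1(B,R^1\breve f_\star\ZZ_2)$ and $E_2^{2,2}=H^2(B,R^2\breve f_\star\ZZ_2)$ respectively. Assuming that $d_2\colon E_2^{1,1}\to E_2^{3,0}$ vanishes — equivalently, by the duality above, that its transpose $d_2\colon E_2^{0,3}\to E_2^{2,2}$ vanishes — one obtains canonical isomorphisms $H^1(B,R^1\breve f_\star\ZZ_2)\cong H^2(\breve X,\ZZ_2)$ and $H^2(B,R^2\breve f_\star\ZZ_2)\cong H^4(\breve X,\ZZ_2)$, under which, by multiplicativity of the Leray spectral sequence, $\Square$ becomes the cup-square map $c\mapsto c\cup c\colon H^2(\breve X,\ZZ_2)\to H^4(\breve X,\ZZ_2)$.

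Finally, by the universal coefficient theorem, $H^2(\breve X,\ZZ)\cong\ZZ$ and the absence of $2$-torsion in $H^3(\breve X,\ZZ)$ give $H^2(\breve X,\ZZ_2)\cong\ZZ_2$, generated by the reduction of $\overline D$, so $\dim\ker\Square$ equals $1$ if $\overline D^2=0$ in $H^4(\breve X,\ZZ_2)$ and $0$ otherwise. Since $\breve X$ is a closed $6$-manifold, the cup pairing $H^4(\breve X,\ZZ_2)\times H^2(\breve X,\ZZ_2)\to H^6(\breve X,\ZZ_2)\cong\ZZ_2$ is perfect, and because $H^2(\breve X,\ZZ_2)$ is spanned by the reduction of $\overline D$ it follows that $\overline D^2=0$ in $H^4(\breve X,\ZZ_2)$ if and only if $\overline D^3=0$ in $H^6(\breve X,\ZZ_2)$, i.e.\ if and only if the integer $\overline D^3$ is even. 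Thus $\dim\ker\Square=\delta$, which with the first identity yields the second formula. I expect the main obstacle to be the spectral-sequence bookkeeping of the previous paragraph, and especially the vanishing of $d_2\colon E_2^{1,1}\to E_2^{3,0}$: this is precisely what makes $H^1(B,R^1\breve f_\star\ZZ_2)$ and $H^2(B,R^2\breve f_\star\ZZ_2)$ genuinely the groups $H^2(\breve X,\ZZ_2)$ and $H^4(\breve X,\ZZ_2)$ — compatibly with cup products — rather than proper subquotients of them.
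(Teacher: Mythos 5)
Your argument is correct and is essentially the paper's own: the first identity follows from the vanishing of $H^0(B,R^2f_\star\ZZ_2)\cong H^0(B,R^1\breve f_\star\ZZ_2)$, deduced from the Leray spectral sequence of $\breve f$ together with $H^1(\breve X,\ZZ_2)=0$, and the second from identifying $H^1(B,R^1\breve f_\star\ZZ_2)$ with $H^2(\breve X,\ZZ_2)\cong\ZZ_2$ (using $H^1(X,\ZZ_2)=0$ and the absence of $2$-torsion in $H^3(\breve X,\ZZ)$) and testing $D^2$ against $D$ under the top-degree pairing. The single step you leave as an assumption, the vanishing of $d_2\colon E_2^{1,1}\to E_2^{3,0}=H^3(B,\ZZ_2)$, is indeed required and is precisely the point the paper settles by invoking the argument of \cite[Lemma~$2.4$]{GrossSLagI}.
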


Note that the rank of the map $\Square\colon D \mapsto D^2$ can be easily computed whenever the intersection form on $H^2(X,\ZZ)$ is known. We illustrate this computation for the quintic threefold in Example~\ref{eg:quintic}, following \cite[\S$4$]{GrossTopology}. Moreover, focusing on the mirror $\breve{X}$ to the quintic threefold, from Corollary~\ref{cor:mod2_cohomology_again} and Remark~\ref{Rem: number of connected components} we deduce that real Lagrangian $\breve{L}_\RR \subset \breve{X}$ has the following Betti numbers
\begin{eqnarray}
\nonumber
h^0(\breve{L}_\RR,\ZZ_2) & = & h^3(\breve{L}_\RR,\ZZ_2) = 2 \\
\nonumber
h^1(\breve{L}_\RR,\ZZ_2) & = & h^2(\breve{L}_\RR,\ZZ_2) = 101.
\nonumber
\end{eqnarray}

Beyond the case of the quintic threefold and its mirror, $\ZZ$-simple Calabi--Yau compactifications exist on a vast number of threefolds, for example, on all Batyrev--Borisov mirror pairs \cite{CBM2, GrossBB}. Given a Calabi--Yau complete intersection $X$ in a toric variety, the dual $\breve{f} \colon \breve{X} \to B$ is expected to be diffeomorphic to a specific non-singular minimal model of the mirror. This is proven for the quintic threefold in \cite[Theorem~$0.2$]{GrossTopology}, and is expected to follow analogously for complete intersections in toric varieties, see~\cite{GrossBB}.

In general, it is expected that different mirrors to a given Calabi--Yau compactification can be realised geometrically by passing between different \emph{large complex structure limit points}. This combinatorially corresponds to applying \emph{flips}, or with the slightly different terminology used in \cite[\S$4$]{GrossTopology} \emph{toric flops}, on the base $B$. In Appendix~\ref{sec:flips}, we describe how the topology of real Lagrangians in mirrors to Calabi--Yau compactifications changes under flips, and prove that performing a flip on $B$ induces a Dehn surgery on $\breve{L}_\RR$. We obtain the following as Theorem~\ref{Invariance under flips}.

\begin{theorem}
	The dimension of $H^1(\breve{L}_\RR,\ZZ_2)$ remains invariant under flipping the base $B$.
\end{theorem}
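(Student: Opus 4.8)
The plan is to compute both sides of the formula for $h^1(\breve L_\RR,\ZZ_2)$ given in Corollary~\ref{cor:mod2_cohomology_again} before and after a flip, and verify that the flip-dependent quantities combine to give the same answer. By that corollary (in the range where its hypotheses apply, which is preserved by flips since the cohomology ring of $\breve X$ with $\ZZ_2$-coefficients and the $2$-torsion in $H^3$ are diffeomorphism invariants of the mirror, hence unchanged), we have
\[
h^1(\breve L_\RR,\ZZ_2) = h^1(B, R^1 f_\star\ZZ_2) + \dim\ker(\Square),
\]
so it suffices to show that the two summands change in a compensating way under a flip. First I would recall from \cite[\S4]{GrossTopology} the precise combinatorial description of a flip (toric flop) on $B$: it is a local modification of the integral affine structure supported near a single vertex of the discriminant locus, replacing one local model by another related by an elementary transformation. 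I would then track how this modification affects $R^1 f_\star\ZZ_2$ and $R^2 f_\star\ZZ_2$, using $\ZZ_2$-simplicity of both $f$ and $\breve f$ to reduce the computation of $H^\bullet(B, R^p f_\star\ZZ_2)$ to a Leray/Mayer--Vietoris computation over the pieces of $B$ that are altered.

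The key intermediary is the long exact sequence \eqref{eq:les_CBM} together with Theorem~\ref{thm:beta_is_square}: because $\beta = \Square$, the dimension $h^1(\breve L_\RR,\ZZ_2)$ is expressible purely in terms of the dimensions of $H^i(B, R^p f_\star\ZZ_2)$ and the rank of the cup-square map on $H^1(B, R^1\breve f_\star\ZZ_2)$. A flip replaces $\breve X$ by a different birational minimal model $\breve X'$; the effect on $H^2$ and on the cubic intersection form is the standard one under a flop of a rational curve — the Picard rank is unchanged but the cubic form changes by the flop formula (the self-intersection numbers shift by a controlled amount determined by the flopped curve class). Meanwhile the term $h^1(B, R^1 f_\star\ZZ_2)$ also changes, since the flip alters the monodromy data of the local system away from the discriminant. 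The heart of the argument is the bookkeeping identity showing that the change in $\dim\ker(\Square)$ — equivalently, the change in the rank of the cup-square form coming from the flop formula — exactly cancels the change in $h^1(B, R^1 f_\star\ZZ_2)$. Concretely I expect to argue that a single flip modifies the cubic form in a way that changes $\dim\ker(\Square)$ by at most the number $\delta$-type correction, and an independent local computation on $B$ shows $h^1(B,R^1 f_\star\ZZ_2)$ changes by exactly the opposite amount; alternatively, in the favourable case $H^2(\breve X,\ZZ)\cong\ZZ$ one checks directly that the divisibility of $\overline D^3$ is flip-invariant, since the flop of a line in the quintic mirror changes $\overline D^3$ by a multiple of the flopped curve's contribution, and one verifies this preserves parity.

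The main obstacle will be the global-to-local reduction for the term $h^1(B, R^1 f_\star\ZZ_2)$: unlike the cohomology of $\breve X$, which is controlled by classical flop formulae, the quantity $h^1(B, R^1 f_\star\ZZ_2)$ is defined in terms of the affine structure on $B$ and the local system $R^1 f_\star\ZZ_2$, and computing how its dimension jumps under the local affine modification requires a careful Mayer--Vietoris analysis around the flipped vertex, keeping track of the change in the monodromy representation and the resulting change in invariants/coinvariants. I would isolate this as a separate lemma: a flip changes $\dim H^1(B, R^1 f_\star\ZZ_2)$ by an explicit amount $\varepsilon$ computed from the local model, and then the theorem reduces to the purely algebraic statement that this $\varepsilon$ equals minus the change in $\dim\ker(\Square)$ under the corresponding flop of $\breve X$. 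A secondary subtlety is confirming that the hypotheses of Corollary~\ref{cor:mod2_cohomology_again} (e.g. $H^1(X,\ZZ_2)=0$) are stable under flips, which should follow since these vanishing conditions are topological and the flip is an isomorphism away from a neighbourhood of one fibre, but this should be checked.
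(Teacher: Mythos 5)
Your proposed route is not the one the paper takes, and it has a genuine gap. The paper's proof is purely local and topological: Lemma~\ref{lem:conn_components} identifies a solid-torus component $L_\shT$ of $\breve{\pi}^{-1}(\shB)$ over a neighbourhood of the flipped edge, Proposition~\ref{pro:flip} shows that the flip acts on $\breve{L}_\RR$ by a Dehn surgery of coefficient $2$ on $L_\shT$ (the new meridian is $2l-m$), and the theorem then follows from a Mayer--Vietoris argument because $2l+m = m$ in $H_1(\partial L_\shT,\ZZ_2)$, so the rank of $H_1(\partial L_\shT,\ZZ_2)\to H_1(\breve{L}_\RR\setminus L_\shT,\ZZ_2)\oplus\ZZ_2$ is unchanged by the surgery. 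This pinpoints exactly why the statement is a mod~$2$ phenomenon (with $\ZZ$ coefficients a coefficient-$2$ surgery does change $H_1$), and it requires no hypotheses beyond those of the construction. Your proposal never isolates this mechanism.

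Concretely, the gaps in your route are these. First, Corollary~\ref{cor:mod2_cohomology_again} carries hypotheses ($H^1(\breve{X},\ZZ_2)=0$, etc.) that Theorem~\ref{Invariance under flips} does not assume, so at best you would prove a special case. Second, you invoke the classical flop formula for the cubic form on $H^2$ of a birational minimal model, but in this paper $\breve{X}$ is a topological compactification of a torus fibration; its identification with an algebraic minimal model (and hence the applicability of flop formulae) is only established for the quintic and is otherwise conjectural, so you would be importing results that are not available at the level of generality of the theorem. Third, the anticipated cancellation is not the right picture: applying the corollary to $\breve{L}_\RR$ (with the roles of $f$ and $\breve{f}$ swapped, which your formula does not do), the $\Square$ term lives on $H^2(X,\ZZ_2)$ and the other term is $h^1(B,R^1\breve{f}_\star\ZZ_2)\cong h^2(\breve{X},\ZZ_2)$; one expects each to be \emph{separately} invariant, but proving either invariance in the topological setting (that a flip of $B$ leaves $X$ unchanged up to homeomorphism, or that it preserves $h^2(\breve{X},\ZZ_2)$) is itself at least as hard as the theorem. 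The ``careful Mayer--Vietoris analysis around the flipped vertex'' that you defer to a lemma is precisely where all the content would have to live, and no argument is supplied for why the local contributions balance.
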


In certain examples we find that the ranks of the mod $2$ cohomology groups of the real Lagrangians we study are related to the Hodge numbers of the corresponding Calabi--Yau varieties, as remarked in \cite[p.$37$]{CBMS}. This relationship has also been studied for toric varieties in \cite{How08}. Investigating further in this direction is focus of future work \cite{AP2}.

\subsection*{Related works} The mod $2$ cohomology groups of fixed point sets of anti-symplectic involutions on symplectic manifolds with Hamiltonian torus actions can be computed using Equivariant Localisation Theorems \cite{hsiang2012cohomology,borel2016seminar,atiyahequivariant,quillen1971spectrum}. For previous work in this direction we refer to \cite{biss2004mod2}. 

The construction problem of algebraic varieties with real structures is studied in \cite{AS}, using Kato--Nakayama spaces, as summarised in Appendix~\ref{sec:Kato-Nakayama}. Another very influential method of constructing algebraic varieties with real structures combinatorially is Viro's patchworking method \cite{Viro}. This method produces real algebraic hypersurfaces inside toric varieties. Bounds on the mod $2$ Betti numbers of the real locus for such varieties have been studied by Bihan, Itenberg, and Viro \cite{itenbergOne,itenbergTwo,Bihan} and, very recently, by Renaudineau--Shaw~\cite{ArthurKirstin} via the introduction of a real analog of tropical homology groups \cite{TropicalHomology}. Our context is simultaneously more and less general than this. The varieties we consider in this paper typically are not toric hypersurfaces, but are restricted to the three-dimensional Calabi--Yau case. We expect our results to agree with \cite{ArthurKirstin} for the class of three-dimensional toric Calabi--Yau hypersurfaces.

------------------------------------------------------------------
\begin{flushleft}
\textbf{Acknowledgements.}
\end{flushleft}
%----------------------------------------------------------------------
We thank Mark Gross, Bernd Siebert, and Tom Coates for many useful conversations. We also thank Paul Seidel for suggesting the question and for his encouragement throughout. Finally, we thank the anonymous referee for their many insightful comments and valuable suggestions which have resulted in major improvements to this article. This project has received funding from the European Research Council (ERC) under the European Union's Horizon $2020$ research and innovation programme (grant agreement No. 682603). TP was partially supported by a Fellowship by Examination at Magdalen College, Oxford.

% !TEX root = paper.tex
%----------------------------------------------------------------------

\section{From real affine to symplectic geometry}
\label{sec:integral_affine_manifolds}

Throughout the paper $M$ denotes a free abelian group of rank $n\geq 2$, and $M_{\RR} = M\otimes_{\ZZ}\RR$ is the associated real vector space.
\begin{definition}
\label{Def: topological torus}
Let $X$ and $B$ be topological manifolds with $\dim X = 2n$ and $\dim B = n$. A \emph{topological $\mathbb{T}^n$ fibration} on $X$ is a continuous, proper map $f \colon X \to B$ with connected fibres, such that 
the fibre $X_b = f^{-1}(b)$
is homeomorphic to an $n$-torus $\mathbb{T}^n$ for all $b \in B_0$, for an open dense set $B_0 \subseteq B$. The set 
\[
\Delta :=  B \setminus B_0
\]
is called \emph{the discriminant locus} of $f$. 
\end{definition}
When $X$ is a symplectic manifold, with symplectic form $\omega$, a topological $\mathbb{T}^n$-fibration on $X$ is called \emph{Lagrangian} if the restriction of $\omega$ to the smooth part of every fibre vanishes. When studying Lagrangian fibrations on spaces with interesting topology, such as Calabi--Yau manifolds other than complex tori, the discriminant locus is almost invariably non-empty. In this case the base $B$ carries the structure of an affine manifold with singularities, which we define following the terminology in \cite[Definition~$2.1$]{GS4}, as follows.
\begin{definition}
\label{Def:singular affine structure}
An \emph{affine manifold} is a topological manifold together with an open cover, for which the change of coordinate transformations are elements of the group of affine transformations of $M_\RR$,  
\[
\Aff(M_\RR) := M_\RR \rtimes GL_n(\RR).
\] 
Such a manifold is called an \emph{integral} affine manifold if the change of coordinate transformations lie in 
\[
\Aff(M) := M \rtimes GL_n(\ZZ).
\] 
An (integral) \emph{affine manifold with singularities} is a topological manifold $B$ which admits an (integral) affine structure on a subset $B_0 := B\setminus \Delta$, where $\Delta \subset B$ is a union of submanifolds of $B$ of codimension at least $2$. The union of these submanifolds is called the {\it discriminant locus} of $B$. 
\end{definition}
Given a Lagrangian fibration $f\colon X \to B$, there is a natural affine structure on $B$, for which the discriminant locus agrees with the discriminant locus of $f$ \cite{D80, cushman, zung}. Moreover, this affine structure is integral if and only if the symplectic form on $X$ represents an integral cohomology class by \cite[Remark~$5.10$]{sepe2013universal} -- see also \cite[Remark~$1.2$]{GS1}. Conversely, fixing an affine manifold with singularities, describing the obstructions to constructing a torus fibration over it is a technically challenging problem, and is discussed further in Appendix~\ref{sec:Kato-Nakayama}.

In this paper, following work of Gross \cite{GrossTopology}, we study torus fibrations on three-dimensional Calabi--Yau compactifications, constructed from integral affine manifolds with \emph{simple} singularities, as defined in \cite[Definition~$3.14$]{CBM3} and \cite[Definition~$6.95$]{DBranes09}. An analogous notion of simplicity was introduced in \cite[Definition~$1.60$]{GS1}, as an indecomposability condition on the local affine monodromy around the discriminant locus of the affine manifold. In dimension three this locus is a trivalent graph with \emph{positive} and \emph{negative} vertices, as discussed further in \S3; see Appendix~\ref{sec:flips} for explicit affine monodromy computations. The construction of topological Calabi--Yau compactifications over integral affine manifolds with simple singularities is explained in \cite{GrossTopology}, and using slightly different notation, in \cite[Chapter~$6$]{DBranes09}. In particular, it is shown how to form a (singular) fibration $f \colon X \to B$ fitting into the commutative diagram
\begin{equation}
\nonumber
	\xymatrix{
		X_0 \ar[d]^{f_0} \ar@{^{(}->}[r] & X \ar[d]^f \\
		B_0 \ar@{^{(}->}[r]^\iota & B, \\
	}
	\label{compactification}
	\end{equation}
such that $f$ is a `good' compactification of $f_0 \colon X_0 := T^\star {B_0}/\breve{\Lambda} \to B_0$, where $\breve{\Lambda} \subset T^\star B_0$ is the canonical covariant lattice of integral cotangent vectors. In general it is not clear which compactifications $f \colon X \to B$ should be considered `good', or how to dualize such a compactification. However, in the case when $B$ has simple singularities, such compactifications and their duals are obtained in a reasonably unique way \cite{GrossTopology}. It was later shown in \cite{CBM2} that this procedure can be carried out in the symplectic category. In particular, after a suitable deformation, the topological fibrations of \cite{GrossTopology} can be made into piecewise smooth Lagrangian fibrations. The local monodromy groups of the singular fibres of these fibrations are unipotent, and a classification of the possible monodromy groups is given in \cite[\S2]{GrossTopology}. 

A related notion to the simplicity of the base $B$, is the notion of \emph{$G$-simplicity} of a $\mathbb{T}^n$-fibration over $B$, defined as follows \cite[Definition~$2.1$]{GrossSLagI}.  
\begin{definition}
\label{Gsimple}
A $\mathbb{T}^n$-fibration $f\colon X\rightarrow B$ is \emph{$G$-simple}, for an abelian group $G$, if  
\[
i_\star R^p{f_0}_\star G =R^pf_\star G
\]
for all $p$, where $R^p{f_0}_\star G$, respectively $R^pf_\star G$, is the sheaf associated to the presheaf on $B_0$, respectively on $B$, given by $U \mapsto H^k(f_0^{-1}(U),G)$, respectively $U \mapsto H^k (f^{-1}(U),G)$.
\end{definition}

The condition of $G$-simplicity implies that the cohomology of the singular fibres is determined by the monodromy of the local system $R^q{f_0}_\star G$ on $B_0$, and is isomorphic to the monodromy invariant part of the cohomology of a nearby non-singular fibre. This condition is crucial in relating the cohomology of $X$ with that of the dual fibration. It is shown in \cite{GrossTopology} that the Calabi--Yau compactifications obtained from integral affine manifolds with simple singularities are $G$-simple for $G=\ZZ$ and $G=\ZZ_n$.

% !TEX root = paper.tex
%----------------------------------------------------------------------
\section{The real Lagrangian as a multi-section}
\label{sec:real_locus}
%----------------------------------------------------------------------

We are now in a position to introduce our main actors. This section has considerable overlap with \cite[\S$2$]{CBM}. Throughout this article we let $B$ be an integral affine manifold with simple singularities, as defined in \cite[Definition~$6.95$]{DBranes09} and \cite[Definition~$3.14$]{CBM3}. In particular, we recall that $\Delta$ is a trivalent graph, with a partition on its set of vertices into positive and negative vertices. Recall that $B_0 = B \setminus \Delta$, and there is a $\ZZ$-simple topological Calabi--Yau compactification $f \colon X \to B$ of the fibration $f_0\colon X_0 \to B_0$, using the local models described in \cite{CBM3,GrossTopology,DBranes09}. While we do not recall the full definition of the map $f$, we briefly recall a description of the fibres used in the compactification.

\begin{enumerate}
\label{positive-negative}
	\item If $p \in \Delta$ is a point which is not a vertex of $\Delta$, then $f^{-1}(b)$ is homeomorphic to the product of a pinched torus with $S^1$.
	\item If $p \in \Delta$ is a \emph{positive vertex}, then $f^{-1}(b)$ is homeomorphic to $S^1 \times S^1 \times S^1/ \sim$ where $(a, b, c) \sim (a', b', c')$ if $(a,b, c) = (a', b', c')$, or $a = a' = 1$. This is a three dimensional analogue of a pinched torus and $\chi(f^{-1}(b)) = 1$.
	\item If $p \in \Delta$ is a \emph{negative vertex}, then $f^{-1}(b)$ is homeomorphic to $S^1 \times S^1 \times S^1/ \sim$, where  $(a, b, c) \sim (a', b', c')$ if  $(a, b, c) = (a', b', c')$ or  $a = a' = 1, b = b'$, or $a = a', b = b' = 1$. The singular locus of this fibre is a figure eight, and $\chi(f^{-1}(b)) = -1$.	
\end{enumerate}

Note that the dual fibration of a $\ZZ$-simple fibration $f$ is given by a similar compactification $\breve{f} \colon \breve{X} \to B$ of the torus bundle $\breve{f}_0 \colon \breve{X}_0  \to B_0$, which exchanges the local models used around positive and negative vertices. We refer to \cite{GrossTopology} for further details.

We view each fibre of $f_0$, and of $\breve{f}_0$, as the quotient space $T^3 =\RR^3/\ZZ^3$. Throughout this paper we consider the canonical involution on each such fibre, given by taking $x \mapsto -x$. The fixed points of this involution can be represented as the eight half-integral points in the unit cube, as illustrated in Figure~\ref{fig:monodromy_cube}. This involution on $B_0$ extends over fibres $f^{-1}(p)$ for $p \in \Delta$, and hence defines an anti-symplectic involution $\iota$ on $X$ whose fixed locus is a real Lagrangian submanifold \cite{CBM}. For technical details on extending the fixed locus over the discriminant locus in a more general set up we refer to \cite[Remark~$4.16$, \S$4.3$]{AS}. We let
\[
\pi\colon L_\RR \to B
\] 
denote the restriction of $f$ to the fixed point locus of $\iota\colon X \to X$, and let $\pi_0$ denote the restriction of $\pi$ to $\pi^{-1}(B_0)$. Similarly, we let $\breve{L}_\RR$ denote the fixed point locus of the corresponding involution on the mirror fibration $\breve{f}\colon \breve{X} \to B$. We define the affine monodromy around a point on the discriminant locus on $B$ as follows.
\begin{definition}
\label{Def: affine monodromy}
Let $B$ be an affine manifold with singularities. Fix a point $p\in B\setminus B_0$. Let $\gamma\colon S^1\rightarrow B$ be a loop based at $p$ and let $U_1,\ldots,U_n$ be a collection of open sets covering the image of $\gamma$. Denote by $A_{i,i+1}^{-t}$ the inverse transpose of the linear part of the change of coordinate function defined on $U_{i}\cap U_{i+1}$. The \emph{affine monodromy representation} $\psi\colon \pi_1(B,p) \to \GL_n(\ZZ)$ is defined by setting
\[
\psi =
\left\{
	\begin{array}{ll}
		A_{1,n}^{-t} \cdots A_{2,1}^{-t}  & \mbox{if } n \geq 2 \\
		\mathrm{Id} & \mbox{otherwise.} 
	\end{array}
\right.
\]
\end{definition}

\begin{figure}
		\includegraphics*[scale=1.0]{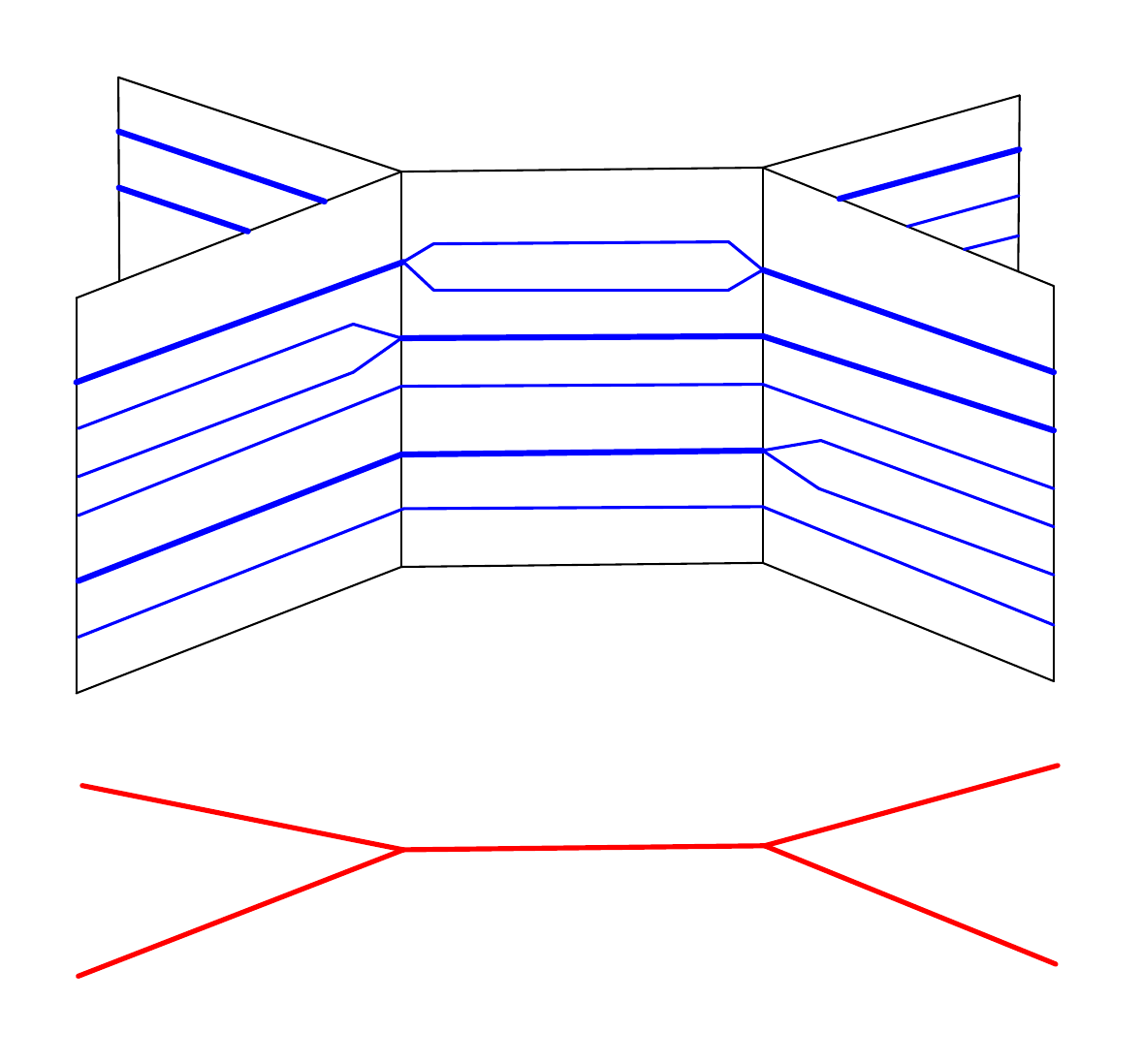}
		\caption{Cross sections of the real Lagrangian $L_\RR$ in the quintic threefold over a part of $\Delta$.}
		\label{fig:Sections}
\end{figure}

Note that the definition of affine monodromy is independent of the chosen representative $\gamma$ in the class $[\gamma] \in \pi_1(B,p)$ \cite{A,AM}. Moreover, the affine monodromy is, by definition, the inverse transpose of the linear part $T_\gamma$ of the standard monodromy representation around a loop $\gamma$ in $B_0$, see \cite[Definition~$1.4$]{GS1}. The points in $\pi^{-1}(p)$ exchanged under the monodromy action are illustrated in Figure~\ref{fig:monodromy_cube}, in which each point in $\pi^{-1}(p)$ corresponds to the vertex with label $i$. Figure~\ref{fig:monodromy_cube} displays, from left to right:
\begin{enumerate}
	\item The orbits of the $\ZZ_2$ action induced by monodromy around a single edge of $\Delta$.
	\item The orbits of the $\ZZ^3_2$ action induced by monodromy around three different edges of $\Delta$ adjacent to a positive vertex.
	\item The orbits of the $\ZZ^3_2$ action induced by monodromy around three different edges of $\Delta$ both adjacent to a negative vertex.
\end{enumerate} 
We describe the affine monodromy around loops in $B$ explicitly in Example~\ref{QuinticBase}, focusing on a torus fibration on the quintic threefold $X \subset \PP^4$, described in \cite[Theorem~$0.2$, \S$2$]{GrossTopology}. Given that the origin in Figure~\ref{fig:monodromy_cube} is fixed under every monodromy action, the following Lemma follows immediately. For an analogous result in greater generality, see also Theorem \ref{Thm: AS}.
\begin{lemma}
\label{lem:connected_cmpts}
Let $X\to B$ be a $T^3$ fibration as constructed as in \cite{GrossTopology, CBM2}.	The fixed point locus $L_\RR \subset X$ of the anti-symplectic involution $\iota$ on $X$, is a $2^3$-to-$1$ covering of $B$ branched along the discriminant locus $\Delta \subset B$. Moreover, at least one connected component of $L_\RR$ maps homeomorphically onto $B$.
\end{lemma}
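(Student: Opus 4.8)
The plan is to analyse $\pi$ over the torus-bundle locus $B_0$, then over the discriminant $\Delta$ using the explicit singular fibres listed above, and finally to exhibit the distinguished component as the zero section of $f$. Over $B_0$ each fibre of $f_0$ is the torus $T^3=\RR^3/\ZZ^3$, on which $\iota$ acts by $x\mapsto -x$; its fixed locus is the subgroup $\tfrac12\ZZ^3/\ZZ^3$ of $2^3$ half-integral points. These vary locally trivially over $B_0$, so $\pi_0\colon \pi^{-1}(B_0)\to B_0$ is a covering map of degree $2^3$, namely the covering classified by the action on $\tfrac12\ZZ^3/\ZZ^3\cong(\ZZ_2)^3$ of the affine monodromy representation of Definition~\ref{Def: affine monodromy}, reduced modulo $2$.

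To pass over $\Delta$ I would treat separately the three types of point $p\in\Delta$ from $(1)$–$(3)$ above, using the explicit local models for $f$ and for the extension of $\iota$ from \cite{GrossTopology, CBM2} (see also \cite[Remark~$4.16$, \S$4.3$]{AS}). In each case $\iota$ descends to $f^{-1}(p)$, and I would: (i) identify $\pi^{-1}(p)$ as the $\iota$-fixed locus of the singular fibre — the half-integral points lying in the collapsed subtorus (for an interior edge point or a positive vertex) or in the collapsed union of circles (for a negative vertex) are identified to the singular point(s) of $f^{-1}(p)$, while the remaining half-integral points stay distinct, so $\pi^{-1}(p)$ is finite with fewer than $2^3$ points; and (ii) track the $2^3$ sheets of $\pi_0$ near $p$ by means of the local monodromy, so as to see that $\pi$ is, on a neighbourhood of $p$, a branched covering whose branch locus lies in $\Delta$. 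Together with the previous paragraph, this proves that $\pi\colon L_\RR\to B$ is a $2^3$-to-$1$ covering of $B$ branched along $\Delta$.

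For the last assertion, consider the zero section $s_0$, defined over $B_0$ by sending $b$ to the identity element $0\in T^3_b$. As $0$ is fixed by every element of $\GL_3(\ZZ)$, it is fixed by all affine monodromies, so $s_0$ is a genuine section of $\pi_0$ and $s_0(B_0)$ is a connected component of $\pi^{-1}(B_0)$, the monodromy orbit $\{0\}$ being a single point. The crucial observation, which one reads off from the local models and which is visible in Figure~\ref{fig:monodromy_cube}, is that the origin never lies in the collapsed subtorus, or union of circles, of a singular fibre; hence as $b\to p\in\Delta$ the point $0\in T^3_b$ converges to a single smooth point of $f^{-1}(p)$, distinct from the limit of every other sheet. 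Therefore $s_0$ extends to a global section $s_0\colon B\to L_\RR$ of $\pi$ whose image is open (by this local analysis) and closed (being the continuous image of the compact space $B$), hence a connected component of $L_\RR$; and since $\pi\circ s_0=\id_B$, the restriction of $\pi$ to this component is a homeomorphism onto $B$.

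I expect the main obstacle to be step (ii) at the two kinds of vertices, where $f^{-1}(p)$ is itself singular and several sheets of $\pi_0$ come together: there one must use the precise local models, and the explicit form of $\iota$, both to confirm that $\pi$ stays a branched covering and to verify that the zero-section sheet remains separated from the colliding ones. A more robust treatment of the fixed locus over $\Delta$, in much greater generality, is Theorem~\ref{Thm: AS}.
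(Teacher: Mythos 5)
Your proposal is correct and takes essentially the same approach as the paper, which deduces the lemma immediately from the observation that the $\iota$-fixed locus of each smooth fibre is the set of $2^3$ half-integral points and that the origin is fixed by every monodromy transformation (cf.\ Figure~\ref{fig:monodromy_cube} and the computation in Example~\ref{QuinticBase}), so that the zero section gives the distinguished component. Your write-up merely makes explicit the local analysis over $\Delta$ and the open-and-closed argument that the paper leaves implicit.
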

The sections of the real Lagrangian $L_\RR \subset X$, where $X \subset \PP^4$ is the quintic threefold studied in Example~\ref{QuinticBase}, over a part of the discriminant locus lying in the interior of a two-dimensional face of $B$ is illustrated in Figure~\ref{fig:Sections}. 

\begin{remark}
We refer to \cite[\S$4.3$]{AS} for a result analogous to Lemma~\ref{lem:connected_cmpts} on Kato--Nakayama spaces. Recall that throughout this paper we restrict our attention to the compactified torus fibrations constructed in \cite{GrossTopology}, the case referred to as \emph{trivial gluing data} in \cite[Remark~$4.16$]{AS}. For a more detailed discussion on this see Appendix~\ref{sec:Kato-Nakayama}. Twisting the gluing data results in topologically distinct real Lagrangians, and the number of connected components can vary, see \cite[Example~$2.10$]{AS}. 
\end{remark}

\begin{figure}
	\makebox[\textwidth][c]{\includegraphics[scale=2]{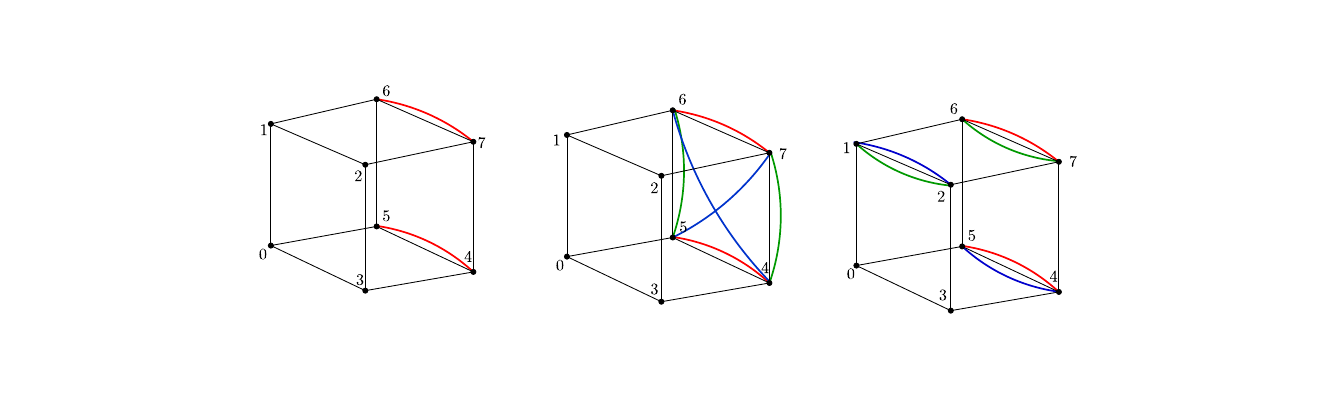}}
	\caption{The action of monodromy around a single branch, and the branches adjacent to a positive and negative vertex of $\Delta$, respectively.}
	\label{fig:monodromy_cube}
\end{figure}

% !TEX root = paper.tex
%----------------------------------------------------------------------
\section{Proof of the Main Theorem}
\label{sec:main_result}
%----------------------------------------------------------------------

\subsection{$\Square$ is equal to $\beta$}
\label{sec:beta_to_square}

Let $B$ be a three-dimensional integral affine manifold with simple singularities, and let
\[
f\colon X \to B
\]
denote the compactification of the $\TT^3$-fibration $f_0 \colon T^\star B/\breve{\Lambda} \to B_0$, as obtained in \cite{GrossTopology, CBM3}. Recall from Definition~\ref{Def:singular affine structure} that the fibration $f$ has singular fibres over the discriminant locus $\Delta \subset B$. Let 
\begin{equation}
\label{real involution}
\iota\colon X \to X
\end{equation}
be the involution whose restriction to smooth fibres is given by $x \mapsto -x$. Recall that we let $\pi$ denote the restriction of the map $f$ to the fixed point locus $L_\RR$ of $\iota$. The map $\pi$ is a $2^3$-to-$1$ branched covering of $B$, ramified over $\Delta$, as described in \S\ref{sec:real_locus}.

In \cite{CBM} the authors prove that there is a short exact sequence relating $\pi_\star L_\RR$ to the sheaves $R^1f_\star\ZZ_2$ and $R^2f_\star\ZZ_2$. We recall its construction in Proposition~\ref{pro:ses} below. The associated long exact sequence relates the mod $2$ cohomology groups of $L_\RR$ with the affine Hodge groups $H^j(B,R^if_\star\ZZ_2)$, for $i,j \in \{0,1,2,3\}$. An open question -- stated in the introduction of \cite{CBM} -- is to compute the connecting homomorphism
\[
\beta \colon H^1(B,R^2f_\star\ZZ_2) \to H^2(B,R^1f_\star\ZZ_2)
\]
induced by \eqref{eq:ses_1} in explicit examples, such as the quintic threefold, or complete intersections in toric manifolds. We give an answer to this question, applicable to any example, in Theorem~\ref{thm:beta_is_square}; allowing us to deduce the ranks of the mod $2$ cohomology groups of $L_\RR$.

Following \cite[\S$3$]{CBM}, we introduce the sheaves
\[  \shG   :=  R^2{f_0}_\star\ZZ_2, \,\ \,\ \,\ \,\  \shG'  :=  {\pi_0}_\star\ZZ_2, \,\ \,\ \,\ \,\  \shG^\vee  :=  R^1{f_0}_\star\ZZ_2. \]

\begin{remark}
	\label{rem:identifications}
	Note that, following \cite[p.$246$]{CBM}, we identify the set underlying the group $\shG_p \cong H^2(\TT^3,\ZZ_2) \cong \ZZ_2^3$ with the fibre $\pi^{-1}(p)$ for any $p \in B_0$. The group $\shG^\vee_p$ is the dual group to $\shG_p$, following the notation used in \cite{CBM}. 
\end{remark}

Before proceeding we fix the following notational convention. Given a set $V$ and a subset $U \subset V$, we let $1_U \colon V \to \ZZ_2$ denote the indicator function
\[
1_U(v) =
\begin{cases}
1 &  \textrm{ if } v \in U \\
0 & \textrm{ otherwise.}
\end{cases}
\]

\begin{proposition}[{\!\!\cite[\S$3$]{CBM}}]
\label{pro:ses}
There is a short exact sequence of sheaves
\begin{equation}
\label{eq:ses_1}
	0 \lra R^1f_\star\ZZ_2\oplus\ZZ^2_2 \lra \pi_\star\ZZ_2 \lra R^2f_\star\ZZ_2 \lra 0
	\end{equation}
\end{proposition}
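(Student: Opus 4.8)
The plan is to construct the short exact sequence \eqref{eq:ses_1} over the dense open set $B_0$ first, where all three sheaves are local systems, and then argue that the sequence extends (uniquely) across the discriminant locus $\Delta$. Over $B_0$ the fibres of $f$ and of $\pi$ are honest tori $T^3$ and their $2^3$-point fixed loci, so the stalk of $R^2f_\star\ZZ_2$ is $H^2(T^3,\ZZ_2)\cong\ZZ_2^3$, which (Remark~\ref{rem:identifications}) I identify set-theoretically with the fibre $\pi^{-1}(p)$; the stalk of $\pi_\star\ZZ_2$ is the space of $\ZZ_2$-valued functions on that $8$-element fibre, i.e. $\ZZ_2^8$; and the stalk of $R^1f_\star\ZZ_2$ is $H^1(T^3,\ZZ_2)\cong\ZZ_2^3$, the dual of $\shG_p$. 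First I would write down, purely linear-algebraically on a single fibre, the maps: the surjection $\pi_\star\ZZ_2\to R^2f_\star\ZZ_2$ sending a function $\varphi$ on the $8$ points to the "restriction along the natural affine-linear structure" — concretely, identifying $\pi^{-1}(p)$ with $(\tfrac12\ZZ/\ZZ)^3\cong\ZZ_2^3$, one averages/evaluates $\varphi$ over the affine planes to recover a class in $H^2=\Hom(H_2,\ZZ_2)$; dually, the inclusion of $R^1f_\star\ZZ_2\cong\ZZ_2^3$ as the "linear functions" on $\pi^{-1}(p)$; and the remaining $\ZZ_2^2$ as the span of the constant function $1$ together with… — here I need to check the exact count, since $8 = 3 + 3 + 2$ forces the complementary summand to be two-dimensional, and I expect it is spanned by $1_{\pi^{-1}(p)}$ and one further canonical class (this is exactly the bookkeeping carried out in \cite[\S3]{CBM}).

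Next I would verify exactness stalkwise over $B_0$: injectivity of the left map is clear from the direct-sum description, surjectivity of the right map follows because evaluation of affine-linear data on $\ZZ_2^3$ is surjective onto $H^2$, and exactness in the middle is the dimension count $3+2+3=8$ together with the observation that a function killed by the map to $R^2f_\star\ZZ_2$ is precisely one that is "affine of degree $\le 1$", hence lies in the image of $R^1f_\star\ZZ_2\oplus\ZZ_2^2$. The key point is that all these identifications are canonical — made using only the group structure $x\mapsto -x$ on the fibre — so they are compatible with the affine monodromy of $B_0$ (which acts through $\GL_3(\ZZ)$, reducing mod $2$, on all three local systems simultaneously); therefore the fibrewise sequence globalises to a short exact sequence of local systems on $B_0$.

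The main obstacle — and the part I would spend the most care on — is descending from $B_0$ to all of $B$, i.e. showing the sequence extends across $\Delta$ to give \eqref{eq:ses_1}. Since $f$ is $\ZZ$-simple (hence $\ZZ_2$-simple, as recorded after Definition~\ref{Gsimple}), one has $\iota_\star R^pf_0{}_\star\ZZ_2 = R^pf_\star\ZZ_2$, so the two outer sheaves are already the pushforwards $\iota_\star$ of their restrictions to $B_0$. The genuinely new input is the analogous statement for $\pi_\star\ZZ_2$: one must check, using the explicit local models of the singular fibres of $\pi$ over edges, positive vertices, and negative vertices described in \S\ref{sec:real_locus} (Figure~\ref{fig:monodromy_cube}), that $\pi_\star\ZZ_2 = \iota_\star\pi_0{}_\star\ZZ_2$ — equivalently, that a local section of $\pi_\star\ZZ_2$ over a punctured neighbourhood of $\Delta$ that is invariant under the branch monodromy extends over $\Delta$, which holds because the branched-covering structure of $\pi$ (Lemma~\ref{lem:connected_cmpts}) means the stalk of $\pi_\star\ZZ_2$ at $p\in\Delta$ is exactly the monodromy-invariants on the nearby fibre. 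Granting these three "$\iota_\star$" identifications, I apply the left-exact functor $\iota_\star$ to the short exact sequence of local systems on $B_0$; left-exactness gives exactness on the left and in the middle for free, and the one remaining thing to verify is right-exactness, i.e. that $\iota_\star(\pi_0{}_\star\ZZ_2)\to\iota_\star(R^2f_0{}_\star\ZZ_2)$ is still surjective as a map of sheaves — this I would check stalkwise at each point of $\Delta$ using the three local models, where the monodromy-invariant functions on the singular real fibre still surject onto the monodromy-invariant part of $H^2$ of the singular torus fibre (each case is a small explicit computation with the pinched-torus models listed in \S\ref{sec:real_locus}). This surjectivity-at-$\Delta$ is where I expect the real work to lie; everything else is linear algebra and the formal machinery of $G$-simplicity.
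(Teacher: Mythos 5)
Your overall strategy is the same as the paper's: identify the stalk of $\pi_{0\star}\ZZ_2$ at $p\in B_0$ with the $\ZZ_2$-valued functions on the eight-point fibre $\pi^{-1}(p)\cong\shG_p\cong\ZZ_2^3$, embed $R^1{f_0}_\star\ZZ_2$ as the linear functions, realise $R^2{f_0}_\star\ZZ_2$ as the quotient, and push forward along $j\colon B_0\hookrightarrow B$ using $\ZZ_2$-simplicity. Your description of the surjection (pairing a function $\varphi$ with an affine plane by summing its values over that plane) unwinds to the same map the paper uses, namely $\varphi\mapsto\sum_{g}\varphi(g)\cdot g$, which sends $1_{\{g\}}\mapsto g$. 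Your remarks on the descent across $\Delta$ --- that $j_\star$ is only left exact, that one must check $\pi_\star\ZZ_2=j_\star{\pi_0}_\star\ZZ_2$, and that surjectivity on the right must be verified stalkwise over $\Delta$ using the local models --- are in fact more explicit than what the paper records, and are the right things to check.

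There is, however, a concrete gap exactly where you flagged uncertainty: the identification of the $\ZZ_2^2$ summand, and with it your middle-exactness argument. The kernel of the surjection $\ZZ_2^V\to V$ is five-dimensional, whereas the space of ``affine functions of degree $\le 1$'' (linear functions plus constants) is only four-dimensional; so the claim that a function killed by the map to $R^2f_\star\ZZ_2$ is precisely an affine function of degree $\le 1$ is false, and as written your sequence would fail to be exact in the middle. The missing generator is the indicator function $1_{\{0\}}$ of the origin: the complementary summand is $\shC_p=\langle 1_{\{0\}},1_V\rangle\cong\ZZ_2^2$, and one checks directly that $1_{\{0\}}$ is not in the span of the linear functions and $1_V$ (it violates $\varphi(g)+\varphi(h)+\varphi(g+h)+\varphi(0)=0$) but is killed by $\varphi\mapsto\sum_g\varphi(g)\cdot g$. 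This choice is also forced by canonicity: $0\in\pi^{-1}(p)$ is the canonical section fixed by all monodromy (Lemma~\ref{lem:connected_cmpts}), so $1_{\{0\}}$ is a monodromy-invariant function and $\shC$ really is a constant subsheaf. With $\shC$ so identified, the kernel of the surjection is exactly $\shG^\vee_p\oplus\shC_p$, and middle exactness follows from your dimension count $3+2+3=8$.
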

\begin{proof}
We first define the short exact sequence \eqref{eq:ses_1} over the locus $B_0 = B\setminus \Delta$, recalling that $\Delta \subset B$ denotes the discriminant locus. In particular, we first describe a short exact sequence of sheaves
\begin{equation}
	\label{eq:ses_G}
	0 \lra \shG^\vee \oplus\shC \lra \shG' \lra \shG \lra 0,
\end{equation}
where $\shC$ is the constant sheaf $\ZZ^2_2$ on $B_0$. Recalling the identifications described in Remark~\ref{rem:identifications}, $\pi^{-1}(p)$ has a group structure isomorphic to $\shG_p \cong \ZZ_2^3$. The stalk $\shG'_p$ is nothing but the set of (set theoretic) maps from $\shG_p$ to $\ZZ_2$, while the set of linear maps from $\shG_p$ to $\ZZ_2$ is equal to $\shG^\vee_p$. Thus $\shG^\vee$ is naturally a subsheaf of $\shG'$. Indeed, the map $\shG^\vee \oplus\shC \to \shG'$ appearing in \eqref{eq:ses_G} is the inclusion of the following functions into $\shG'_p$:
\begin{enumerate}
	\item The set $\shG^\vee$ of linear maps $\ZZ_2^3 \to \ZZ_2$.
	\item The indicator function $1_{\{0\}}$, evaluating to $1$ at the origin and to $0$ elsewhere.
	\item The constant function $1_V$.
\end{enumerate}
The set of maps generated by $1_{\{0\}}$ and the constant function $1_V$ is denoted by
\[
\shC_p = \langle 1_{\{0 \}}, 1_V  \rangle \cong \ZZ_2^2.
\]
The map $\shG' \to \shG$ in \eqref{eq:ses_G} is defined as follows. Let $g$ be an element of the group $\shG_p$; one can show that every class in the quotient of $\shG'_p$ by $\shG^\vee_p \oplus\shC_p$ is represented by an element $1_{\{g\}}$ for a unique $g \in \shG_p$. Hence the map from $\shG'_p$ to $G_p$ sends every element in the class of $1_{\{g\}}$ to $g$. This map is linear, and extends to a morphism of sheaves. Using $\ZZ_2$-simplicity, we push forward the sheaves in the short exact sequence \eqref{eq:ses_G} by $j_\star$, where 
\[
j\colon B_0 \hookrightarrow B
\] 
denotes the canonical inclusion, to obtain the exact sequence \eqref{eq:ses_1}.
\end{proof}

We assume for the remainder of this section that $B$ is a $\ZZ_2$-homology sphere. In particular, we have that
\[
H^j(B,R^1f_\star\ZZ_2 \oplus \ZZ_2^2) = H^j(B,R^1f_\star\ZZ_2)
\]
for each $j \in \{1,2\}$. Moreover, in this setting, the long exact sequence associated to the short exact sequence described in Proposition~\ref{pro:ses} simplifies to
\begin{eqnarray}
\label{eq:les_CBM}
\nonumber
0 \lra H^0(B, R^1f_\star \ZZ_2) \oplus \ZZ^2_2  & \lra & H^0(B,\pi_\star \ZZ_2)  \to  H^0(B,R^2f_\star \ZZ_2)  \lra \\ 
H^1(B, R^1f_\star \ZZ_2) & \lra & H^1(B,\pi_\star \ZZ_2)  \lra  H^1(B,R^2f_\star \ZZ_2)  \stackrel{\beta} \lra \\
H^2(B, R^1f_\star \ZZ_2) & \lra & H^2(B,\pi_\star \ZZ_2)  \lra  H^2(B,R^2f_\star \ZZ_2)  \lra 0. 
\nonumber
\end{eqnarray}
The Leray spectral sequence associated to the function $\pi \colon L_\RR \to B$ has the form
\[
H^j(B, R^i\pi_\star\ZZ_2) \implies H^{i+j}(L_\RR, \ZZ_2).
\]
Noting that the sheaf $R^i\pi_\star\ZZ_2$ is trivial for all $i \in \ZZ_{> 0}$, we conclude that
\begin{equation}
\label{leray-on-pi}
H^i(B, \pi_\star\ZZ_2) \cong H^i(L_\RR, \ZZ_2)
\end{equation} 
for all $i\in \{ 0, 1, 2, 3\}$. To deduce the ranks of the cohomology groups of the real locus $L_\RR$ using \eqref{leray-on-pi}, it suffices to compute the connecting homomorphism $\beta$ in the sequence \eqref{eq:les_CBM}. 

In order to describe $\beta$ we make use of the following dualities between sheaves associated to the fibration $f$ and $\breve{f}$, following \cite[\S$2$]{GrossSLagI}. 

\begin{lemma}
	\label{lem:dual_sheaves}
	There are canonical identifications
	\begin{align*}
	\mu_1 \colon  C^1(B,R^1\breve{f}_\star \ZZ_2) &\to C^1(B,R^2f_\star \ZZ_2),\\
	\mu_2 \colon  C^2(B,R^2\breve{f}_\star \ZZ_2) &\to C^2(B,R^1f_\star \ZZ_2).
	\end{align*} 
Fixing a point $p \in B_0$, these identifications specialise to the usual isomorphisms 
\[
H^a((\TT^3)^\star,\ZZ_2) \cong H^{3-a}(\TT^3,\ZZ_2)
\]
of stalks at $p$, for $a \in \{1,2\}$.
\end{lemma}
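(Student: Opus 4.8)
The plan is to reduce the statement to the construction of two canonical isomorphisms of sheaves on $B$,
\[
R^1\breve{f}_\star\ZZ_2 \;\xrightarrow{\ \sim\ }\; R^2 f_\star\ZZ_2
\qquad\text{and}\qquad
R^2\breve{f}_\star\ZZ_2 \;\xrightarrow{\ \sim\ }\; R^1 f_\star\ZZ_2 ,
\]
after which $\mu_1$ and $\mu_2$ are obtained by applying the \v{C}ech cochain functor $C^k(B,-)$ for a fixed good open cover; since this functor sends isomorphisms of sheaves to isomorphisms of cochain groups, all of the content lies in producing these two sheaf isomorphisms and checking that on the stalk over a point of $B_0$ they recover the Poincar\'e duality isomorphisms $H^a((\TT^3)^\star,\ZZ_2)\cong H^{3-a}(\TT^3,\ZZ_2)$.

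First I would pass to $B_0$. By $\ZZ_2$-simplicity (Definition~\ref{Gsimple}) we have $R^p f_\star\ZZ_2 = j_\star R^p {f_0}_\star\ZZ_2$, and likewise for $\breve{f}$, so by functoriality of $j_\star$ it suffices to construct isomorphisms of the corresponding local systems on $B_0$. Write $\Lambda_p$ for the homology lattice $H_1(f^{-1}(b),\ZZ)$ of a nearby smooth fibre of $f$; then the corresponding smooth fibre of $\breve{f}$ has homology lattice the dual lattice $\Lambda_p^\vee$, and -- this is the defining property of the dual fibration as constructed in \cite[\S2]{GrossSLagI} and \cite{GrossTopology} -- the monodromy representation of $\breve{f}$ is the contragredient (inverse transpose) of that of $f$. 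Hence $R^1{f_0}_\star\ZZ_2$ is the local system with stalk $\Hom(\Lambda_p,\ZZ_2)$ and $R^2{f_0}_\star\ZZ_2 \cong \bigwedge\nolimits^2 R^1{f_0}_\star\ZZ_2$, while the stalk of $R^a\breve{f}_\star\ZZ_2$ over $B_0$ is $\bigwedge\nolimits^a\Hom(\Lambda_p^\vee,\ZZ_2)\cong\bigwedge\nolimits^a(\Lambda_p\otimes\ZZ_2)$.

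The key input is then a piece of multilinear algebra over $\ZZ_2$. For a free $\ZZ$-module $\Lambda$ of rank $3$ there is a natural isomorphism
\[
\bigwedge\nolimits^{k}(\Lambda^\vee)\;\cong\;\bigwedge\nolimits^{3-k}(\Lambda)\otimes(\det\Lambda)^\vee ,
\]
given by contraction against a generator of $\det\Lambda$; it is $\GL(\Lambda)$-equivariant, intertwining the action of $g$ by $\bigwedge\nolimits^{k}(g^{-t})$ on the left with the action by $(\det g)^{-1}\bigwedge\nolimits^{3-k}(g)$ on the right. Reducing modulo $2$, the rank-one factor $(\det\Lambda)^\vee\otimes\ZZ_2$ is canonically $\ZZ_2$ -- this is exactly where working with $\ZZ_2$-coefficients is essential, since it removes the orientation choice that an integral Poincar\'e duality would require -- and the twisting character $(\det g)^{-1}$ becomes trivial. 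Taking $k=1$ and $k=2$ and applying this fibrewise over $B_0$ produces isomorphisms of local systems $R^1\breve{f}_\star\ZZ_2\cong R^2 f_\star\ZZ_2$ and $R^2\breve{f}_\star\ZZ_2\cong R^1 f_\star\ZZ_2$ over $B_0$. On a stalk at $p\in B_0$ this is precisely Poincar\'e duality on the torus: contraction with the canonical mod $2$ fundamental class identifies $H^a((\TT^3)^\star,\ZZ_2)\cong H_a(\TT^3,\ZZ_2)\cong H^{3-a}(\TT^3,\ZZ_2)$. Pushing forward by $j_\star$ yields the two sheaf isomorphisms on $B$, and applying $C^k(B,-)$ gives $\mu_1$ and $\mu_2$.

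The step I expect to be the main obstacle -- in fact the only non-formal point -- is the monodromy bookkeeping: one must argue carefully, from the construction of the dual fibration in \cite{GrossTopology}, that the monodromy of $\breve{f}$ on fibrewise homology is indeed the contragredient of that of $f$, and then verify that the multilinear isomorphism above genuinely intertwines the two resulting representations. The potential obstruction is the determinant twist $(\det g)^{-1}$, and the argument goes through precisely because with $\ZZ_2$-coefficients this twist is trivial and $\det\Lambda$ is canonically trivialised. Everything else -- naturality of the multilinear identity (so that it sheafifies), functoriality of $j_\star$, and compatibility of a sheaf isomorphism with the \v{C}ech complex of a fixed cover -- is formal.
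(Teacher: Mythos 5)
Your argument is correct and is essentially the paper's own: the paper's proof consists of the single observation that the identifications follow from $\ZZ_2$-simplicity of $f$ and $\breve{f}$ together with the duality $R^{p}\breve{f}_\star G \cong R^{3-p}f_\star G$ of \cite[$(2.1)$]{GrossSLagI}, which is exactly the contragredient-monodromy plus mod-$2$ Poincar\'e duality argument you spell out. You have simply unpacked the cited reference, including the (correct) point that the determinant twist is invisible over $\ZZ_2$.
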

\begin{proof}
	These identifications follow immediately from the $\ZZ_2$-simplicity of $f$ and $\breve{f}$, see \cite[$(2.1)$]{GrossSLagI}.
\end{proof}

The isomorphisms $\mu_1$ and $\mu_2$ in Lemma \ref{lem:dual_sheaves}, define isomorphisms on \u{C}ech cohomology groups, which we denote by
$\bar{\mu}_i \colon H^i(B,R^i\breve{f}_\star \ZZ_2) \to H^i(B,R^{3-i}f_\star \ZZ_2)$ for each $i \in \{1,2\}$.

We also observe that we can remove the constant sheaf factors appearing the sequence \eqref{eq:ses_1}, and throughout this section we work with the following sequence.
\begin{lemma}
\label{lem:ses_2}
There is a short exact sequence of sheaves 
\begin{equation}
\label{eq:ses_2}
0 \lra R^1f_\star\ZZ_2 \lra \shF  \lra R^2f_\star\ZZ_2  \lra 0,
\end{equation}
where $\shF := \pi_\star\ZZ_2/\langle 1_{\{0\}},1_V \rangle$.
\end{lemma}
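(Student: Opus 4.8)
The plan is to construct the short exact sequence \eqref{eq:ses_2} by exhibiting $\shF$ as a quotient of $\pi_\star\ZZ_2$ in which the two "redundant" generators $1_{\{0\}}$ and $1_V$ of the constant-sheaf factor $\shC$ have been killed, and then checking that the quotient of the kernel term $R^1f_\star\ZZ_2 \oplus \ZZ_2^2$ by $\shC$ is exactly $R^1f_\star\ZZ_2$. Concretely, start over the open locus $B_0$ with the sequence \eqref{eq:ses_G}, namely $0 \to \shG^\vee \oplus \shC \to \shG' \to \shG \to 0$. The subsheaf $\shC = \langle 1_{\{0\}}, 1_V\rangle$ sits inside $\shG'$; I would form the quotient sheaf $\shF_0 := \shG'/\shC$ and observe that, since $\shC$ is a direct summand of the kernel $\shG^\vee \oplus \shC$ of the surjection $\shG' \to \shG$, passing to the quotient by $\shC$ leaves the cokernel $\shG$ unchanged while replacing the kernel by $\shG^\vee$. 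This yields $0 \to \shG^\vee \to \shF_0 \to \shG \to 0$ over $B_0$.

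The second step is to push this forward along $j\colon B_0 \hookrightarrow B$, exactly as in the proof of Proposition~\ref{pro:ses}. Here I would invoke $\ZZ_2$-simplicity of $f$ to identify $j_\star \shG = R^2f_\star\ZZ_2$ and $j_\star \shG^\vee = R^1f_\star\ZZ_2$, and define $\shF := j_\star\shF_0$. The only subtlety is exactness on the right after applying $j_\star$: one needs $R^1 j_\star \shG^\vee = 0$, or at least that the relevant connecting map vanishes. But this is precisely the content already used to produce \eqref{eq:ses_1} from \eqref{eq:ses_G} in Proposition~\ref{pro:ses} — the pushforward of \eqref{eq:ses_G} is exact — so the same argument applies verbatim to the sub-sequence obtained by quotienting out the constant summand. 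One then checks that $j_\star\shF_0 = j_\star\shG' / j_\star\shC = \pi_\star\ZZ_2/\langle 1_{\{0\}},1_V\rangle$, using that $\shC$ is the constant sheaf $\ZZ_2^2$ on $B$ (being constant on $B_0$ and $B$ a $\ZZ_2$-homology sphere, or more simply because $1_{\{0\}}$ and $1_V$ extend as global sections of $\pi_\star\ZZ_2$). This gives the claimed identification $\shF = \pi_\star\ZZ_2/\langle 1_{\{0\}}, 1_V\rangle$ and the sequence \eqref{eq:ses_2}.

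Alternatively — and perhaps more cleanly — one can deduce \eqref{eq:ses_2} directly from \eqref{eq:ses_1} by a diagram chase: the constant subsheaf $\ZZ_2^2 = \langle 1_{\{0\}}, 1_V\rangle$ is a subsheaf of $\pi_\star\ZZ_2$ mapping isomorphically onto the $\ZZ_2^2$ summand of $R^1f_\star\ZZ_2 \oplus \ZZ_2^2$; quotienting the middle and left terms of \eqref{eq:ses_1} by this common subsheaf, and applying the snake lemma (or the nine lemma), leaves the right-hand term $R^2f_\star\ZZ_2$ unaffected and replaces the left-hand term by $R^1f_\star\ZZ_2$, yielding \eqref{eq:ses_2}. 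I would likely present this version, as it avoids re-running the pushforward argument.

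The main obstacle is a bookkeeping one rather than a conceptual one: one must be careful that $\langle 1_{\{0\}}, 1_V\rangle$ really is a subsheaf of $\pi_\star\ZZ_2$ that extends the constant sheaf $\shC$ on $B_0$ — i.e. that these two functions on a fibre $\pi^{-1}(p)$ are monodromy-invariant, so that they glue to honest sections over $B$. This is immediate from Lemma~\ref{lem:connected_cmpts}: the origin $0 \in \pi^{-1}(p)$ is fixed by every monodromy transformation (it is the point lying on the component mapping homeomorphically onto $B$), so $1_{\{0\}}$ is monodromy-invariant, and $1_V$ trivially is. Hence the quotient makes sense globally, and the rest is routine.
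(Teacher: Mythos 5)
Your preferred route (the second one) is correct, but it is genuinely different from the paper's argument. The paper does not run a snake/nine-lemma chase on \eqref{eq:ses_1}; instead it writes down an explicit retraction of the inclusion $\shC \hookrightarrow \shG'$, namely $g \mapsto \bigl(\sum_{i \in \ZZ_2^3}g(i), \sum_{i \in \ZZ_2^3\setminus \{0\}}g(i)\bigr)$, which sends $1_{\{0\}}\mapsto(1,0)$ and $1_V\mapsto(0,1)$ and is manifestly monodromy-equivariant. This exhibits $\shC$ as a direct summand of $\shG'$ as a local system, and hence after pushforward gives the stronger statement $\pi_\star\ZZ_2 \cong \shF \oplus \ZZ_2^2$, from which \eqref{eq:ses_2} splits off of \eqref{eq:ses_1}. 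Your diagram-chase derivation buys the same exact sequence with less computation, using only that the $\ZZ_2^2$ summand of the kernel of \eqref{eq:ses_1} is precisely the subsheaf of $\pi_\star\ZZ_2$ generated by the global sections $1_{\{0\}}$ and $1_V$ (your monodromy-invariance check via Lemma~\ref{lem:connected_cmpts} is the right justification for this); what it does not give you is the direct-sum decomposition itself, which is a mildly stronger and occasionally convenient fact.

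One caution about your first route: the identification $j_\star(\shG'/\shC) = j_\star\shG'/j_\star\shC$ is not automatic, since $j_\star$ is only left exact; at a point of $\Delta$ one must lift monodromy invariants of $\shG'_q/\shC_q$ to monodromy invariants of $\shG'_q$, and the obstruction lives in $H^1(\pi_1(U\cap B_0),\shC_q)$, which need not vanish for formal reasons. The clean way to kill it is exactly the splitting of $\shC$ off of $\shG'$ --- i.e.\ the paper's retraction --- so the parenthetical justification you offer (that $1_{\{0\}}$ and $1_V$ extend to global sections) only gives the injection $j_\star\shC\hookrightarrow j_\star\shG'$, not the surjectivity you need. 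Since you chose to present the second route, this does not affect your proof, but the first route as written has a gap at that step.
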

\begin{proof}
Fixing a point $p \in B_0$, the inclusion $\shC=\ZZ_2^2 \to \shG'$, defined in the proof of Proposition~\ref{pro:ses}, splits via the projection $\shG' \to \ZZ_2^2$ defined by
\[
g \mapsto \big(\sum_{i \in \ZZ_2^3}g(i), \sum_{i \in \ZZ_2^3\setminus \{0\}}g(i)\big),
\]
for each $g \in \shG'_p$. Pushing forward along the inclusion $j \colon B_0 \to B$, and noting that $\pi_\star \ZZ_2 = j_\star \shG'$, we obtain a splitting
\[
\pi_\star\ZZ_2 \cong \left(\pi_\star\ZZ_2/\langle 1_{\{0\}},1_V \rangle\right) \oplus \ZZ_2^2.
\]
\end{proof}

We now describe the connecting homomorphism $\beta$ via the snake lemma applied to the sequence of \u{C}ech complexes of the sheaves appearing in the short exact sequence \eqref{eq:ses_2}. We remark that the map $\beta$ coincides with the connecting homomorphism from $H^1(B,R^2f_\star\ZZ_2)$ to $H^2(B,R^1f_\star\ZZ_2)$ which appears in the long exact sequence associated to \eqref{eq:ses_2}.

We fix an open cover $\foU$ of $B$ which is a Leray cover for all the sheaves appearing in \eqref{eq:ses_2}. Assume moreover that every intersection of open sets in $\foU$ is either empty or contractible, and that no intersection of three open sets in $\foU$ intersects $\Delta$. Letting $I$ be an index set for $\foU$, and given distinct elements $i_1,\ldots i_k \in I$, we define 
\[
U_{i_1,\ldots,i_k} := U_{i_1} \cap \cdots \cap U_{i_k},
\]
where $U_i$ denotes the element of $\foU$ indexed by $i \in I$. Recall that, in general,
\[
\Gamma(U_{i_1,\ldots i_k},R^af_\star\ZZ_2) \cong H^a(f^{-1}(U_{i_1,\ldots,i_k}),\ZZ_2)
\]
for any $k \in \ZZ_{> 0}$, any subset $\{i_1,\ldots, i_k\} \subset I$ of size $k$, and any $a \in \{1,2\}$. Moreover, by $\ZZ_2$-simplicity, $H^a(f^{-1}(U_{i_1,\ldots,i_k}),\ZZ_2)$ is isomorphic to
\begin{equation}
\label{eq:identification-alphaij}
\Gamma(U_{i_1,\ldots,i_k},j_\star R^af_\star\ZZ_2) = \Gamma(U_{i_1,\ldots,i_k} \cap B_0, R^af_\star\ZZ_2)
\end{equation}
for each $a \in \{1,2\}$. Fixing a base point $p \in U_{i_1,\ldots,i_k} \cap B_0$, this group is canonically isomorphic to the subspace of $H^a(f^{-1}(p),\ZZ_2) \cong H^a(\TT^3,\ZZ_2)$, which is monodromy invariant around every loop in $U_{i_1,\ldots,i_k} \cap B_0$ based at $p$. Hence we may identify elements of $C^1(B,R^2f_\star\ZZ_2)$ with a subspace of the vector space of tuples 
\[
\alpha = \big(\alpha_{i,j} \in H^2(\TT^3,\ZZ_2)  : i,j \in I,~i\neq j\big),
\]
where each $\alpha_{i,j}$ is monodromy invariant around loops in $U_{i,j} \cap B_0$, and hence uniquely determines an element of $H^2(f^{-1}(U_{i,j}),\ZZ_2)$. Note that this identification relies on the identification of various fibres of $f$ with a fixed $\TT^3$; however, these identifications play no role in what follows. We can identify $C^2(B,R^2f_\star\ZZ_2)$ with a vector space of tuples of cohomology elements $\alpha_{i,j,k}$ in a (different) reference torus $\TT^3$ in a similar way. Note that, fixing indices $\{i,j,k\}$ and a pair $\{i,j\}$, a choice of path in $U_{i,j} \setminus \Delta$ from a point $p \in U_{i,j}$ to a point $q \in U_{i,j,k}$ determines an isomorphism between these two reference tori. While this identification depends on the choice of path if $U_{i,j} \cap \Delta \neq \varnothing$, the choice of path does not affect the identification of the monodromy invariant part of $H^a(f^{-1}(p),\ZZ_2)$ with a subspace of $H^a(f^{-1}(q),\ZZ_2)$.

\begin{lemma}
\label{lem:beta_explicit}
Let $\alpha = \big(\alpha_{i,j} \in H^2(\TT^3,\ZZ_2)  : i,j \in I,~i\neq j\big)$ be a cocycle, and let $[\alpha]$ be the class of $\alpha$ in $H^1(B,R^2f_\star\ZZ_2)$. The class 
\[
\beta([\alpha]) \in H^2(B,R^1f_\star\ZZ_2)
\]
is represented by the cocycle 
\[
\xi := (\xi_{i,j,k} : i,j,k \in I) \in C^2(B,R^1f_\star\ZZ_2)
\]
defined as follows. If $\alpha_{i,j}$, $\alpha_{i,k}$, and $\alpha_{j,k}$ are pairwise distinct, $\xi_{i,j,k} \in H^1(\TT^3,\ZZ_2) \cong V^\star$ is the unique generator of the annihilator of the span
\[
W := \langle \alpha_{i,j},\alpha_{j,k},\alpha_{i,k} \rangle \subset V = H^2(\TT^3,\ZZ_2).
\]
Otherwise, if $\alpha_{i,j}$, $\alpha_{i,k}$, and $\alpha_{j,k}$ are not all pairwise distinct, $\xi_{i,j,k} := 0$.
\end{lemma}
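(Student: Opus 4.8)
The plan is to read off $\beta$ from the snake lemma applied to the short exact sequence of \u{C}ech complexes attached to \eqref{eq:ses_2} and the Leray cover $\foU$. Concretely: given a cocycle $\alpha=(\alpha_{i,j})$ representing a class in $H^1(B,R^2f_\star\ZZ_2)$, one chooses a lift $\tilde\alpha\in C^1(\foU,\shF)$ of $\alpha$, forms $\delta\tilde\alpha\in C^2(\foU,\shF)$, notes that $\delta\tilde\alpha$ maps to $\delta\alpha=0$ and hence lies in the image of the injection $C^2(\foU,R^1f_\star\ZZ_2)\hookrightarrow C^2(\foU,\shF)$, and takes $\xi$ to be its unique preimage; then $[\xi]=\beta([\alpha])$. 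The lemma amounts to making a good choice of $\tilde\alpha$ and identifying the resulting $\xi$.

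First I would build the lift. For $p\in U_{i,j}\cap B_0$, the element $\alpha_{i,j}$ is a monodromy-invariant point of $\shG_p\cong\pi^{-1}(p)\cong\ZZ_2^3$; since monodromy acts linearly on $\shG_p$ and fixes $\alpha_{i,j}$, the indicator function $1_{\{\alpha_{i,j}\}}\colon\shG_p\to\ZZ_2$ is monodromy-invariant as well, so it defines a section of $\shG'={\pi_0}_\star\ZZ_2$ over $U_{i,j}\cap B_0$ and, by $\ZZ_2$-simplicity, a section of $\pi_\star\ZZ_2$ over all of $U_{i,j}$ (this is precisely where simplicity and the monodromy hypothesis are used to cross $\Delta$). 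I set $\tilde\alpha_{i,j}$ to be the image of this section in $\shF=\pi_\star\ZZ_2/\langle1_{\{0\}},1_V\rangle$. By the construction of the surjection $\shG'\to\shG$ in Proposition~\ref{pro:ses}, the class of $1_{\{g\}}$ maps to $g$, so $\tilde\alpha_{i,j}\mapsto\alpha_{i,j}$ and $\tilde\alpha$ is a genuine lift of $\alpha$.

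Next I would compute $\delta\tilde\alpha$ over a triple intersection $U_{i,j,k}$. By hypothesis $U_{i,j,k}$ is contractible and disjoint from $\Delta$, so the fibre of $\pi$ over it is $2^3$ disjoint copies of $U_{i,j,k}$ and $\shF(U_{i,j,k})=\shG'_p/\shC_p$, with
\[
(\delta\tilde\alpha)_{i,j,k}=\big[\,1_{\{\alpha_{i,j}\}}+1_{\{\alpha_{i,k}\}}+1_{\{\alpha_{j,k}\}}\,\big].
\]
The cocycle condition $\delta\alpha=0$ says $\alpha_{j,k}=\alpha_{i,j}+\alpha_{i,k}$ in $V=\ZZ_2^3$, so $W:=\langle\alpha_{i,j},\alpha_{i,k},\alpha_{j,k}\rangle$ has dimension at most $2$. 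If $\dim W=2$ --- equivalently, $\alpha_{i,j},\alpha_{i,k},\alpha_{j,k}$ are pairwise distinct --- these are exactly the three nonzero vectors of $W$, so the function above is $1_{W\setminus\{0\}}$; writing $W=\ker\ell$ for the unique nonzero $\ell$ in the annihilator $\Ann(W)\subset V^\star$ and using $1_{\ker\ell}=1_V+\ell$, one gets $1_{W\setminus\{0\}}=1_V+\ell+1_{\{0\}}$, whence $(\delta\tilde\alpha)_{i,j,k}=[\ell]$ is the image of $\ell\in R^1f_\star\ZZ_2$ under the inclusion of \eqref{eq:ses_2}, and $\xi_{i,j,k}=\ell$. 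If $\dim W\le1$, the cocycle condition forces two of the three vectors to coincide and the third to be $0$ (or all three to be $0$), so the function reduces mod $2$ to $1_{\{0\}}\in\shC$, giving $(\delta\tilde\alpha)_{i,j,k}=0$ and $\xi_{i,j,k}=0$. By the snake lemma the resulting $\xi$ is a cocycle representing $\beta([\alpha])$, which is the asserted formula.

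I expect the only delicate points to be bookkeeping rather than conceptual: (i) checking carefully that $1_{\{\alpha_{i,j}\}}$ glues to a section over the whole of $U_{i,j}$, which is where $\ZZ_2$-simplicity and monodromy-invariance are essential; (ii) pinning down the inclusion $R^1f_\star\ZZ_2\hookrightarrow\shF$ so that the functional $\ell$ goes to the class of $v\mapsto\ell(v)$, exactly as in Proposition~\ref{pro:ses}; and (iii) the mod $2$ arithmetic in the degenerate cases together with the identity $1_{\ker\ell}=1_V+\ell$. The path-dependence of the reference-torus identifications discussed before the statement causes no trouble, since $W$ and its annihilator are assembled from monodromy-invariant data and hence are independent of those choices.
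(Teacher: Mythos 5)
Your proposal is correct and follows essentially the same route as the paper's proof: the same snake-lemma computation with the lift by indicator functions $1_{\{\alpha_{i,j}\}}$, the same coboundary calculation over triple overlaps, and the same two-case analysis resting on the identity $1_{W\setminus\{0\}} = 1_V + 1_{\{0\}} + \ell$ modulo $\langle 1_{\{0\}},1_V\rangle$. The only addition is your explicit justification, via monodromy invariance and $\ZZ_2$-simplicity, that the indicator sections extend across $\Delta$ to give a genuine lift in $C^1(B,\shF)$, a point the paper leaves implicit.
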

\begin{proof}
The long exact sequence of \u{C}ech cohomology groups associated to a short exact sequence of sheaves agrees with that defined using derived functor cohomology. To describe the image of $\beta$ in \u{C}ech cohomology we identify $\beta$ with the connecting homomorphism obtained from the snake lemma applied to the short exact sequence of \u{C}ech complexes
\begin{equation}
\label{eq:cech_complexes}
0 \lra C^\bullet(B,R^1f_\star\ZZ_2) \lra C^\bullet(B,\shF) \lra C^\bullet(B,R^2f_\star\ZZ_2) \lra 0.
\end{equation}
Recall that we chose the cover $\foU$ of $B$ such that no triple intersection $U_{i,j,k}$ of open sets intersects $\Delta$. Hence $R^2f_\star\ZZ_2$ is locally constant over $U_{i,j,k}$. Thus we have that, for all (pairwise distinct) indices $i,j,k \in I$,
\begin{eqnarray}
	\Gamma(U_{i,j,k},R^2f_\star\ZZ_2) & \cong & V \\	
	\nonumber
	\Gamma(U_{i,j,k},\pi_\star \ZZ_2) & \cong & \ZZ_2^V,
\end{eqnarray}
where $\ZZ_2^V$ denotes the set of functions from $V = H^2(\TT^3,\ZZ_2)$ to $\ZZ_2$. Note that
\[
\Gamma(U_{i,j,k}, \shF ) \cong \ZZ_2^V / \langle 1_{\{0\}} , 1_V \rangle,
\]
where $\shF$ is as described in Lemma~\ref{lem:ses_2} and $i,j,k \in I$ are pairwise distinct indices. We describe $\beta$ using the following part of the exact sequence \eqref{eq:cech_complexes}, where $\delta_i$ denote the \u{C}ech coboundary maps for $i\in\{1,2,3\}$.
\begin{equation}
\label{eq:snake_redux}
\nonumber
\xymatrix@R+1.5pc@C+2pc{
	0 \ar[r] & C^1(B,R^1f_\star\ZZ_2) \ar[r]^{\iota_1}\ar[d]_{\delta_1} & C^1(B,\F) \ar[r]^{\phi_1}\ar[d]_{\delta_2} & C^1(B,R^2f_\star\ZZ_2) \ar[r]\ar[d]_{\delta_3} & 0 \\
	0 \ar[r] & C^2(B,R^1f_\star\ZZ_2) \ar[r]^{\iota_2} & C^2(B,\F) \ar[r]^{\phi_2} & C^2(B,R^2f_\star\ZZ_2) \ar[r] & 0.
}
\nonumber
\end{equation}

Note that, given pairwise distinct indices $i,j,k \in I$, since $\alpha$ is a cocycle, we have
\[
\delta_3(\alpha)=\alpha_{i,j} + \alpha_{j,k} + \alpha_{i,k} = 0.
\]
Lift $\alpha \in C^1(B,R^2f_\star\ZZ_2)$ to the element %$\bar{\alpha} \in C^1(B,\shF)$ such that 
\[
\bar{\alpha} := \big(\bar{\alpha}_{i,j} \in \Gamma(U_{i,j},\shF  : i,j \in I,~i\neq j\big) \in C^1(B,\shF),
\]
where $\bar{\alpha}_{i,j}$ denotes the class of the indicator function $1_{\alpha_{i,j}}$ in $\Gamma(U_{i,j},\shF)$. Note that, in this notation, $\phi_1 \colon \bar{\alpha}_{i,j} \mapsto \alpha_{i,j}$.

Since
\[
\delta_2(\bar{\alpha})_{i,j,k} = \bar{\alpha}_{i,j} + \bar{\alpha}_{j,k} + \bar{\alpha}_{i,k}
\] 
for any pairwise distinct indices $i$, $j$, and $k$ in $I$, we have that $\delta_2(\bar{\alpha})_{i,j,k}$ is represented by the function 
\[
1_{\alpha_{i,j}} + 1_{\alpha_{j,k}} + 1_{\alpha_{i,k}} \in \ZZ_2^V.
\]
Following the proof of the snake lemma,
\[
\phi_2\circ\delta_2(\bar{\alpha})_{i,j,k} = (\delta_3\circ \phi_1(\bar{\alpha}))_{i,j,k} = 0,
\]
and so $\delta_2(\bar{\alpha}) = \iota_2(\zeta)$ for some cocycle $\zeta \in C^2(B,R^1f_\star\ZZ_2)$ such that
\[
\beta([\alpha])=[\zeta].
\]
It thus remains to check that we can take $\zeta$ to the be cocycle $\xi$ as described in the statement of the Lemma. We verify this in each of the following situations.
\begin{enumerate}
\item The cohomology classes $\alpha_{i,j}$, $\alpha_{j,k}$, and $\alpha_{i,k}$ are pairwise distinct. In this case the three elements $\alpha_{i,j}$, $\alpha_{j,k}$, and $\alpha_{i,k}$ lie in a unique two dimensional subspace $W$ of $\Gamma(U_{i,j,k},R^2f_\star\ZZ_2)$. Recall that -- by \eqref{eq:identification-alphaij} -- $\Gamma(U_{i,j,k},R^2f_\star\ZZ_2)$ is isomorphic to $V$. In this case $\xi_{i,j,k}$ is the unique non-zero linear map on $V$ which generates the annihilator of $W$. Regarding $\xi_{i,j,k}$ as an element of $\ZZ_2^V$, we have that
\[
(1_{\{0\}} + 1_V + \xi_{i,j,k}) = 1_{\alpha_{i,j}} + 1_{\alpha_{j,k}} + 1_{\alpha_{i,k}} = 1_{W\setminus \{0\}}.
\]
Note that, since $1_{\{0\}}$ and $1_V$ vanish in $\ZZ_2^V / \langle 1_{\{0\}}, 1_V  \rangle$,
\[
\iota_2(\xi)_{i,j,k} = \delta_2(\bar{\alpha})_{i,j,k} \in \ZZ_2^V / \langle 1_{\{0\}}, 1_V  \rangle.
\]
\item The cohomology classes $\alpha_{i,j}$, $\alpha_{j,k}$, and $\alpha_{i,k}$ are not all distinct. In this case at least one of $\alpha_{i,j}$, $\alpha_{j,k}$, and $\alpha_{i,k}$ is zero. It follows that $\delta_2(\bar{\alpha})_{i,j,k}$ is represented by the indicator function of the origin, which vanishes in the group $\ZZ_2^V / \langle 1_{\{0\}}, 1_V  \rangle$. In this case we also recall that $\xi_{i,j,k} := 0 \in V$.
\end{enumerate}
Therefore, $\iota_2(\xi)_{i,j,k}$ coincides with $\delta_2(\bar{\alpha})_{i,j,k}$ for all values of $i,j,k \in I$, as required.
\end{proof}

%The first of our main results, in the next Theorem \ref{thm:beta_is_square}, follows from the description of $\beta$ given in Lemma~\ref{lem:beta_explicit} and the identification of `mirror dual' pairs of sheaves.

We now make use of the description of $\beta\colon H^1(B, R^2f_\star\ZZ_2)  \to  H^2(B, R^1f_\star\ZZ_2)$ given in Lemma~\ref{lem:beta_explicit} to give a mirror interpretation. Our main result, Theorem~\ref{thm:beta_is_square} shows that this map is given by squaring divisor classes in the mirror Calabi--Yau, as in \cite[Theorem~$4.1$]{GrossSLagI}. To show this, we use the description of the cup product in \u{C}ech cohomology \cite[IV $6.8$]{Bredon}, which we recall in the following remark. 

\begin{remark}
	\label{rem:product}
	Given cochains $\alpha \in C^a(B,\shI)$ and $\beta \in C^b(B,\shJ)$ for sheaves $\shI$ and $\shJ$, the product $\alpha \otimes \beta \in C^{a+b}(B, \shI\otimes \shJ)$, is defined by setting
	\[
	(\alpha \otimes \beta)_{i_0,\ldots,i_{a+b}} := \sum_{c=0}^{a+b}\alpha_{i_0,\ldots,i_c}\otimes\beta_{i_c,\ldots,i_{a+b}},
	\]
	where the subscript indices are interpreted cyclically. This product is compatible with the Leray spectral sequence and the usual cup product.
\end{remark}
 
\begin{theorem}
	\label{thm:beta_is_square}
	Using the identifications given in Lemma~\ref{lem:dual_sheaves}, the connecting homomorphism $\beta$ in the long exact sequence \eqref{eq:les_CBM} is identical to the map
\begin{eqnarray}
\label{Eq: square}
\Square \colon H^1( B,R^1\breve{f}_\star\ZZ_2) & \lra & H^2( B,R^2\breve{f}_\star\ZZ_2) \\
\nonumber
D & \longmapsto & D^2
\end{eqnarray}
\end{theorem}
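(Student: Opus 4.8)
The plan is to establish the equivalent statement $\beta\circ\bar{\mu}_1 = \bar{\mu}_2\circ\Square$ by writing down explicit \u{C}ech cocycle representatives for both composites on the cover $\foU$ fixed before Lemma~\ref{lem:beta_explicit} and comparing them termwise: for the left-hand side I would use Lemma~\ref{lem:beta_explicit}, and for the cup product the formula of Remark~\ref{rem:product}. Concretely, start from a cocycle $D = (D_{i,j})$ with $D_{i,j}\in H^1((\TT^3)^\star,\ZZ_2)$ representing a class in $H^1(B,R^1\breve{f}_\star\ZZ_2)$. Since $\mu_1$ is induced by an isomorphism of sheaves, it commutes with the \u{C}ech differentials and carries $D$ to a cocycle $\alpha$ with $\alpha_{i,j} = \mu_1(D_{i,j})\in H^2(\TT^3,\ZZ_2)$ representing $\bar{\mu}_1[D]$; Lemma~\ref{lem:beta_explicit} then represents $\beta(\bar{\mu}_1[D])$ by the cocycle $\xi$ whose $(i,j,k)$ entry is the generator of the annihilator of $\langle\alpha_{i,j},\alpha_{j,k},\alpha_{i,k}\rangle$ when those three classes are pairwise distinct, and $0$ otherwise. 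On the other side, the only meaningful term of the formula of Remark~\ref{rem:product} represents $D^2\in H^2(B,R^2\breve{f}_\star\ZZ_2)$ by the cocycle $i,j,k\mapsto D_{i,j}\cup D_{j,k}$ (cup product in $H^\bullet((\TT^3)^\star,\ZZ_2)$), so $\bar{\mu}_2(D^2)$ is represented by $i,j,k\mapsto \mu_2(D_{i,j}\cup D_{j,k})$. Since $\foU$ was chosen so that triple intersections avoid $\Delta$, this reduces the theorem to the pointwise identity
\[
\xi_{i,j,k}\;=\;\mu_2\bigl(D_{i,j}\cup D_{j,k}\bigr),
\]
an equality in $H^1(\TT^3,\ZZ_2)$ for the fibre over a generic point of $U_{i,j,k}$; the path-dependent identifications of reference tori discussed before Lemma~\ref{lem:beta_explicit} play no role in it, so no further globalisation is needed.

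Next I would reduce this identity to linear algebra over $\ZZ_2$. Writing $a = D_{i,j}$ and $b = D_{j,k}$, the cocycle relation gives $\alpha_{i,k} = \mu_1 a + \mu_1 b$, so $\langle\alpha_{i,j},\alpha_{j,k},\alpha_{i,k}\rangle = \langle\mu_1 a,\mu_1 b\rangle$. Over $\ZZ_2$ the three classes $v,w,v+w$ are pairwise distinct precisely when $v,w$ are linearly independent, and since $\mu_1$ is injective this happens precisely when $a,b$ are linearly independent; when they are dependent one of $a,b$ vanishes or $a=b$, whence $a\cup b = 0$ in $\wedge^2 H^1((\TT^3)^\star,\ZZ_2)$ while the relevant triple in Lemma~\ref{lem:beta_explicit} fails to be pairwise distinct, so both sides of the identity are $0$. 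It therefore remains to show, for $a,b\in H^1((\TT^3)^\star,\ZZ_2)$ linearly independent, that the generator of the annihilator of $\langle\mu_1 a,\mu_1 b\rangle\subset H^2(\TT^3,\ZZ_2)$ equals $\mu_2(a\cup b)\in H^1(\TT^3,\ZZ_2)$.

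For this remaining step I would use that the identifications of Lemma~\ref{lem:dual_sheaves} are, stalkwise, the Poincar\'e dualities of $\TT^3$ and of $(\TT^3)^\star$ composed with the canonical identifications of the cohomology of dual tori with exterior algebras. Put $V := H_1(\TT^3,\ZZ_2)$, so that $H^1(\TT^3,\ZZ_2) \cong V^\star$ and $H^1((\TT^3)^\star,\ZZ_2) \cong V$ canonically. Then under $\mu_1$ the pairing $H^1(\TT^3,\ZZ_2)\times H^2(\TT^3,\ZZ_2)\to H^3(\TT^3,\ZZ_2)=\ZZ_2$ becomes the canonical pairing of $V^\star$ with $V$, so the generator of the annihilator of $\langle\mu_1 a,\mu_1 b\rangle$ is the unique nonzero element of $\langle a,b\rangle^{\perp}\subset V^\star$. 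On the other side, cap-cup adjunction on $(\TT^3)^\star$ gives, for $c\in H^1((\TT^3)^\star,\ZZ_2)\cong V$, that $\langle\mu_2(a\cup b),c\rangle = \langle c\cup a\cup b,[(\TT^3)^\star]\rangle$ equals the coefficient of $c\wedge a\wedge b$ in $\wedge^3 V\cong\ZZ_2$, which is nonzero exactly when $c\notin\langle a,b\rangle$; hence $\mu_2(a\cup b)$ is also the unique nonzero element of $\langle a,b\rangle^{\perp}$. As $\langle a,b\rangle$ is two-dimensional in the three-dimensional space $V$, its annihilator is a line, so it contains exactly one nonzero vector and the two descriptions agree.

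I expect the main obstacle to be this last step: making the identifications $\mu_1$ and $\mu_2$ of Lemma~\ref{lem:dual_sheaves} sufficiently explicit, with their orientation conventions pinned down, that the two cap-cup computations can be carried out and compared. The orientation ambiguities are harmless modulo $2$, but the bookkeeping must be done with care, and one has to check that the conventions used for $\mu_1$ and for $\mu_2$ are mutually compatible. By contrast, the passage from $\beta$ to the \u{C}ech cup product and the reduction to the pointwise identity, as well as the treatment of the degenerate triples, should be routine once Lemmas~\ref{lem:beta_explicit} and~\ref{lem:dual_sheaves} and Remark~\ref{rem:product} are available.
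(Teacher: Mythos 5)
Your proposal is correct and follows essentially the same route as the paper: represent $\beta(\bar{\mu}_1[D])$ by the cocycle of Lemma~\ref{lem:beta_explicit}, represent $D^2$ by the \u{C}ech cup product of Remark~\ref{rem:product}, and reduce to the pointwise $\ZZ_2$-linear-algebra identity on each triple overlap, treating the degenerate (linearly dependent) triples separately. The only differences are presentational — the paper computes the final identification with an explicit basis $e_1,e_2,e_3$ and the full cyclic-sum representative of the cup product, whereas you argue basis-free via annihilators and the single term $D_{i,j}\cup D_{j,k}$ — and these agree mod $2$ since $e_i^2=0$.
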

\begin{proof}
As described in Remark \ref{rem:product}, the cup product admits a description via \u{C}ech cohomology. Fix a cocycle
\[
\cycleD = (\cycleD_{i,j} : i,j \in I, i \neq j ) \in  C^1(B, R^1\breve{f}_\star\ZZ_2)
\]
and let $D$ denote the class it represents in $H^1( B,R^1\breve{f}_\star\ZZ_2)$. 

Analogously to our treatment of $\alpha \in C^1(B, R^2f_\star\ZZ_2)$ in \eqref{eq:identification-alphaij}, we identify each element $\cycleD_{i,j} \in \Gamma(U_{i,j}, R^1\breve{f}_\star\ZZ_2)$ with an element in $H^1((\TT^3)^\star,\ZZ_2)$. In particular, fixing a basepoint $p \in U_{i,j} \cap B_0$, we identify $\cycleD_{i,j}$ with an element of $H^1((\TT^3)^\star,\ZZ_2)$ which is monodromy invariant around loops in $U_{i,j}\cap B_0$.

Since $\cycleD$ is a cocycle, 
\[
\cycleD_{i,j}+\cycleD_{j,k}+\cycleD_{k,i}=0
\] 
for any pairwise distinct indices $i$, $j$, and $k \in I$. Hence, as in Lemma \ref{lem:beta_explicit}, we are in one of the following two situations.
\begin{enumerate}
	\item $\cycleD_{i,j} = e_1$, $\cycleD_{j,k} = e_2$, and $\cycleD_{i,k} = e_1+e_2$, for a basis $\{e_1,e_2,e_3\}$ of 
\[
H^1((\TT^3)^\star,\ZZ_2) \cong H^2(\TT^3,\ZZ_2).
\] 
By Remark~\ref{rem:product}, we have that
\begin{align*}
	(\cycleD^2)_{i,j,k} &= \cycleD_{i,j}\cycleD_{j,k} + \cycleD_{j,k}\cycleD_{i,k} + \cycleD_{i,k}\cycleD_{i,j}\\
	 &= e_1e_2 + e_2(e_1+e_2) + e_1(e_1+e_2) \\
	 &= 3e_1e_2 + e_1^2 + e_2^2.
\end{align*}
Recall that the cohomology algebra of the torus is the exterior algebra generated by $e_1,e_2,e_3$ and hence 
\begin{eqnarray}
\nonumber
e_1^2=e_2^2= 0 \textrm{, and } \\
\nonumber
3e_1e_2 = e_1e_2 = e_3^\star,
\nonumber
\end{eqnarray}
where the final equality uses the standard identification
\begin{equation}
\label{eq:identify_with_Hodge_star}
H^2((\TT^3)^\star,\ZZ_2) \cong H^1((\TT^3)^\star,\ZZ_2)^\star.
\end{equation}
In other words, $(\cycleD^2)_{i,j,k}$ is the unique non-zero covector which annihilates $\cycleD_{i,j}$, $\cycleD_{j,k}$, and $\cycleD_{i,k}$. Letting $\alpha := \mu_1(\cycleD)$, we have that (since $\mu_2$ is compatible with the identification \eqref{eq:identify_with_Hodge_star} on smooth fibres), $\mu_2$ identifies $(\cycleD^2)_{i,j,k}$ with $\xi_{i,j,k}$, where $\xi$ is the cocycle defined in the proof of Lemma~\ref{lem:beta_explicit}.
\item One of $\{\cycleD_{i,j},\cycleD_{j,k},\cycleD_{i,k}\}$ is zero, and the other two are equal. Without loss of generality, assume $\cycleD_{i,j} = 0$ and let $e := \cycleD_{j,k} = \cycleD_{i,k}$. We observe that
\begin{align*}
(\cycleD^2)_{i,j,k} &= \cycleD_{i,j}\cycleD_{j,k} + \cycleD_{j,k}\cycleD_{i,k} + \cycleD_{i,k}\cycleD_{i,j}\\
&= 0 + e^2 + 0 \\
&= 0.
\end{align*}
Note that $(\cycleD^2)_{i,j,k}$ is also evidently zero if $\cycleD_{i,k} = \cycleD_{j,k} = \cycleD_{i,k} = 0$. 
\end{enumerate}
Thus we have that $\beta(\alpha)_{i,j}$ and $\Square(\cycleD)_{i,j}$ are identified (by $\mu_2$) for any distinct $i$ and $j$ in $I$, from which the result follows.
\end{proof}

\begin{remark}
\label{Rem: number of connected components}
If $X$ and $\breve{X}$ are simply connected, $H^1(B,R^1\breve{f}_\star\ZZ_2) \cong H^2(X,\ZZ_2)$, and the map $\Square$ in Theorem \ref{thm:beta_is_square} coincides with the usual cup product $D \mapsto D \smile D$ in cohomology. Moreover, in this case the real Lagrangians $L_\RR \subset X$ and $\breve{L}_\RR \subset \breve{X}$ have two connected components \cite[Corollary $1$]{CBM}.
\end{remark}

\begin{corollary}
\label{cor:mod2_cohomology_again}
If $H^1(\breve{X},\ZZ_2) = 0$, then
\begin{equation}
\label{eq:corollary_1}
h^1(L_\RR,\ZZ_2) = h^1(B,R^1f_\star\ZZ_2) + \dim\ker(\Square),
\end{equation}
where $\Square$ is the map defined in \eqref{Eq: square}. Moreover if, in addition, $H^2(\breve{X},\ZZ) \cong \ZZ$, $H^3(\breve{X},\ZZ)$ contains no $2$-torsion, and $H^1(X,\ZZ_2) = 0$, then 
\begin{equation}
\label{eq:corollary_2}
h^1(L_\RR,\ZZ_2) = h^1(B,R^1f_\star\ZZ_2) + \delta.
\end{equation}
Here
\[
\delta = 
\begin{cases}
1 & \text{if $\overline{D}^3$ is divisible by $2$.} \\
0 & \text{otherwise,}
\end{cases}
\]
where $\overline{D}$ is a generator of $H^2(\breve{X},\ZZ)$.
\end{corollary}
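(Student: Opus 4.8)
The plan is to read both identities off the long exact sequence \eqref{eq:les_CBM}, the identification $\beta=\Square$ of Theorem~\ref{thm:beta_is_square}, and the Leray isomorphism \eqref{leray-on-pi}; the only genuine work is a pair of vanishing statements coming from the Leray spectral sequences of $f$ and $\breve f$, together with one use of Poincar\'e duality on $\breve X$. For \eqref{eq:corollary_1}: by \eqref{leray-on-pi} we have $h^1(L_\RR,\ZZ_2)=\dim H^1(B,\pi_\star\ZZ_2)$, and by Theorem~\ref{thm:beta_is_square} together with the isomorphisms $\bar\mu_1,\bar\mu_2$ of Lemma~\ref{lem:dual_sheaves}, $\dim\ker\beta=\dim\ker(\Square)$; so it suffices to show the map $H^0(B,R^2f_\star\ZZ_2)\to H^1(B,R^1f_\star\ZZ_2)$ in \eqref{eq:les_CBM} vanishes, since then \eqref{eq:les_CBM} gives a short exact sequence $0\to H^1(B,R^1f_\star\ZZ_2)\to H^1(B,\pi_\star\ZZ_2)\to\ker\beta\to 0$. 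I would in fact prove $H^0(B,R^2f_\star\ZZ_2)=0$: because $B$ is a $\ZZ_2$-homology sphere, the Leray spectral sequence of $\breve f$ in total degree $1$ gives $H^0(B,R^1\breve f_\star\ZZ_2)\cong H^1(\breve X,\ZZ_2)=0$, and the degree-$0$ instance of the duality behind Lemma~\ref{lem:dual_sheaves} (see \cite[\S2]{GrossSLagI}: the monodromy invariants of $H^2(\TT^3,\ZZ_2)$ agree with those of $H^1((\TT^3)^\star,\ZZ_2)$) identifies $H^0(B,R^2f_\star\ZZ_2)$ with $H^0(B,R^1\breve f_\star\ZZ_2)$.

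For \eqref{eq:corollary_2} I would first make $\Square$ fully explicit. The hypotheses $H^1(X,\ZZ_2)=H^1(\breve X,\ZZ_2)=0$ give, exactly as above, $H^0(B,R^af_\star\ZZ_2)=H^0(B,R^a\breve f_\star\ZZ_2)=0$ for $a\in\{1,2\}$. Feeding these vanishings into the Leray spectral sequence of $\breve f$ leaves, in total degree $2$, the identification $H^2(\breve X,\ZZ_2)\cong E_\infty^{1,1}$ with $E_\infty^{1,1}=\ker\big(d_2\colon H^1(B,R^1\breve f_\star\ZZ_2)\to H^3(B,\ZZ_2)\big)$. Now $\breve f$ admits a topological section --- the component of $\breve L_\RR$ mapping homeomorphically onto $B$, by Lemma~\ref{lem:connected_cmpts} applied to $\breve f$ --- which forces the pullback $\breve f^\star\colon H^3(B,\ZZ_2)\to H^3(\breve X,\ZZ_2)$ to be injective and the fibrewise top class in $E_2^{0,3}$ to survive; since $H^0(B,R^2\breve f_\star\ZZ_2)=0$ kills the only other relevant differential, this yields $d_2=0$ on $E_2^{1,1}$ and on $E_2^{0,3}$. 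Hence $H^1(B,R^1\breve f_\star\ZZ_2)\cong H^2(\breve X,\ZZ_2)$, which by $H^2(\breve X,\ZZ)\cong\ZZ$ and the absence of $2$-torsion in $H^3(\breve X,\ZZ)$ is $\ZZ_2$, generated by the reduction $\overline D\bmod 2$ of the integral generator. Thus $\Square$ is the map $\ZZ_2\to H^2(B,R^2\breve f_\star\ZZ_2)$ sending $\overline D\bmod 2$ to its square, and $\dim\ker(\Square)$ equals $1$ if $(\overline D\bmod 2)^2=0$ in $H^2(B,R^2\breve f_\star\ZZ_2)$ and $0$ otherwise.

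It then remains to match this vanishing with the parity of $\overline D^3$. By Poincar\'e duality on the closed oriented $6$-manifold $\breve X$ over $\ZZ_2$, $\dim H^4(\breve X,\ZZ_2)=\dim H^2(\breve X,\ZZ_2)=1$, and the cup-product pairing $H^4(\breve X,\ZZ_2)\times H^2(\breve X,\ZZ_2)\to H^6(\breve X,\ZZ_2)\cong\ZZ_2$ is perfect; since $H^2(\breve X,\ZZ_2)=\langle\overline D\bmod 2\rangle$, cupping with $\overline D\bmod 2$ is injective on $H^4$, so $(\overline D\bmod 2)^2=0$ in $H^4(\breve X,\ZZ_2)$ precisely when $(\overline D\bmod 2)^3=0$ in $H^6(\breve X,\ZZ_2)$, i.e. precisely when $\overline D^3$ is even. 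To transport this to $H^2(B,R^2\breve f_\star\ZZ_2)$ I use multiplicativity of the Leray spectral sequence (Remark~\ref{rem:product}): the vanishing of $d_2$ on $E_2^{0,3}$ gives $E_\infty^{2,2}=E_2^{2,2}=H^2(B,R^2\breve f_\star\ZZ_2)$, and combined with $\dim H^4(\breve X,\ZZ_2)=1$ (which forces the other degree-$4$ graded piece $E_\infty^{3,1}$ to vanish) this identifies $H^4(\breve X,\ZZ_2)$ with $H^2(B,R^2\breve f_\star\ZZ_2)$ compatibly with products, carrying $(\overline D\bmod 2)^2$ to $\Square(\overline D\bmod 2)$; likewise $E_\infty^{3,3}=E_2^{3,3}=H^3(B,\ZZ_2)\cong H^6(\breve X,\ZZ_2)$ carries $(\overline D\bmod 2)^3$ to $\Square(\overline D\bmod 2)\smile\overline D$. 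Hence $\Square(\overline D\bmod 2)=0$ iff $\overline D^3$ is even, so $\dim\ker(\Square)=\delta$, and with \eqref{eq:corollary_1} this is \eqref{eq:corollary_2}. (The degenerate case $H^2(B,R^2\breve f_\star\ZZ_2)=0$ is consistent directly: there $\ker(\Square)=\ZZ_2$, while $\overline D^3\equiv\overline D\smile(\overline D)^2\equiv 0$, so $\delta=1$.)

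The point I expect to be the main obstacle is the spectral-sequence bookkeeping for $\breve f$: establishing that the off-diagonal graded pieces $E_\infty^{0,2}$ and $E_\infty^{3,1}$ vanish and that $d_2$ is zero on both $H^1(B,R^1\breve f_\star\ZZ_2)$ and $E_2^{0,3}$, so that each cup power $(\overline D\bmod 2)^k\in H^{2k}(\breve X,\ZZ_2)$ may legitimately be identified with its leading term in $E_2^{k,k}$ and $\Square$ thereby be computed on the nose. These vanishings draw, in a slightly intertwined way, on the hypotheses $H^1(X,\ZZ_2)=H^1(\breve X,\ZZ_2)=0$, on $H^2(\breve X,\ZZ)\cong\ZZ$ (through Poincar\'e duality), and on the existence of a section of $\breve f$; once they are in hand, the reduction of $\dim\ker(\Square)$ to the parity of $\overline D^3$ is an immediate consequence of Poincar\'e duality.
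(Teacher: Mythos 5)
Your proposal is correct and follows essentially the same route as the paper: both identities are read off the long exact sequence \eqref{eq:les_CBM} after establishing $H^0(B,R^2f_\star\ZZ_2)=0$ via the Leray spectral sequence for $\breve f$ and the duality behind Lemma~\ref{lem:dual_sheaves}, and the second identity is reduced to the parity of $\overline D^3$ after identifying $H^1(B,R^1\breve f_\star\ZZ_2)\cong H^2(\breve X,\ZZ_2)\cong\ZZ_2$. The only difference is one of detail: where the paper asserts tersely that $\Square$ vanishes exactly when $D^3=0$ and cites Gross for the vanishing of $d_2$, you justify these points via Poincar\'e duality on $\breve X$, the existence of a section of $\breve f$, and multiplicativity of the Leray spectral sequence --- a legitimate, slightly more self-contained fleshing-out of the same argument.
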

\begin{proof}
Consider the following terms of the exact sequence \eqref{eq:les_CBM},
\begin{align*}
H^0(B,R^2f_\star\ZZ_2)  \to H^1(B,R^1f_\star\ZZ_2) \to H^1(L_\RR,\ZZ_2)
  \to H^1(B,R^2f_\star\ZZ_2) \stackrel{\beta}\to H^2(B,R^1f_\star\ZZ_2),
\end{align*}
where $\beta = \Square$ by Theorem~\ref{thm:beta_is_square}. The identity \eqref{eq:corollary_1} follows from the fact $H^0(B,R^2f_\star\ZZ_2)$ vanishes. Indeed, using the Leray spectral sequence for $\breve{f}$ and the vanishing of $H^2(B,\breve{f}_\star \ZZ_2)$, we obtain a surjection
\[
H^1(\breve{X},\ZZ_2) \lra H^0(B,R^1\breve{f}_\star\ZZ_2).
\]
Since, by assumption, $H^1(\breve{X},\ZZ_2) = 0$ it follows that $H^0(B,R^1\breve{f}_\star\ZZ_2) = 0$, and so \eqref{eq:corollary_1} follows from Lemma~\ref{lem:dual_sheaves}.

To prove identity \eqref{eq:corollary_2}, observe that the Bockstein homomorphism 
\[
H^2(\breve{X},\ZZ_2) \to H^3(\breve{X},\ZZ)
\]
induced by the short exact sequence
\begin{equation}
\nonumber
0 \longrightarrow \ZZ \longrightarrow \ZZ \longrightarrow \ZZ_2 \lra 0
\end{equation}
vanishes, since $H^3(\breve{X},\ZZ)$ contains no $2$-torsion. It follows that the map
\begin{equation}
H^2(\breve{X},\ZZ) \longrightarrow H^2(\breve{X},\ZZ_2)
\end{equation}
is surjective, and $H^2(\breve{X},\ZZ_2) \cong \ZZ_2$. Let $\overline{D} \in H^2(\breve{X},\ZZ)$ denote a lift of the generator $D \in H^2(\breve{X},\ZZ_2)$. Observe that the map $\Square$ defined in \eqref{Eq: square} is the zero map if and only if $D\cdot D^2 = D^3$ is zero; that is, exactly when $\overline{D}^3 \in H^6(\breve{X},\ZZ)$ is divisible by $2$. In this case
\[
\ker(\Square) = H^1(B,R^1\breve{f}_\star\ZZ_2) = H^2(\breve{X},\ZZ_2),
\]
where the latter equality follows from the Leray spectral sequence for $\breve{f}$. Specifically, this equality follows from the vanishing of the cohomology groups $H^0(B,R^2\breve{f}_\star\ZZ_2)$ and $H^2(B,\breve{f}_\star\ZZ_2)$, and the vanishing of the differential
\[
d_2 \colon H^1(B,R^1\breve{f}_\star\ZZ_2) \to H^3(B,\ZZ_2)
\]
on the $E_2$ page of the Leray spectral sequence for $\breve{f}$. The cohomology group $H^0(B,R^2\breve{f}_\star\ZZ_2)$ vanishes since
\[
H^0(B,R^2\breve{f}_\star\ZZ_2) \cong H^0(B,R^1f_\star\ZZ_2) = 0,
\]
while $H^2(B,\breve{f}_\star\ZZ_2)$ vanishes since $B$ is a $\ZZ_2$-homology sphere. The differential $d_2$ vanishes by the argument used in the proof of \cite[Lemma~$2.4$]{GrossSLagI}.
\end{proof}

\begin{example}
Let $\breve{f}\colon \breve{X} \to B$ be the mirror to the quintic threefold $X$, together with the fibration described in \S\ref{sec:real_locus}. Note that $\breve{X}$ satisfies the conditions of Corollary~\ref{cor:mod2_cohomology_again}, where $\delta = 0$, since $H^2(\breve{X},\ZZ)$ is generated by a hyperplane class $H$ in $\PP^4$ and $H^3$ is represented by $5$ points in $\breve{X}$. Thus we obtain
 \begin{equation}
 \label{mirror quintic}
 h^1(\breve{L}_{\RR},\ZZ_2) = 101,
  \end{equation}
where $\breve{L}_{\RR} \subset \breve{X}$ denotes the real Lagrangian in $\breve{X}$ described in \S\ref{sec:real_locus}.
\end{example}

\begin{example}
\label{eg:quintic}
The real locus $L_{\RR} \subset X$, in the quintic threefold, with the fibration described in \ref{QuinticBase}, satisfies
\begin{eqnarray}
h^1(L_\RR,\ZZ_2) = h^2(L_\RR,\ZZ_2) = 29.
\nonumber
\end{eqnarray}
This follows by an explicit calculation of the map $\Square$ in Theorem~\ref{thm:beta_is_square}, made via consideration of the triple intersection form on $H^2(\breve{X},\ZZ)$, as described in \cite[Proposition~$4.2$]{GrossTopology}. To compute this we construct a $101\times 101$ matrix whose rank is equal to the rank of the map $\Square$ using the computer algebra system MAGMA \cite{MAGMA}. Documented source code for this calculation, which can be adapted to further examples where the intersection form is known, is available at \cite{Prince}.
\end{example}

\appendix
% !TEX root = paper.tex
%----------------------------------------------------------------------
\section{Triangulations, flips and Dehn surgery}
\label{sec:flips}

\subsection{Maximal triangulations}
\label{Maximal triangulations}

We recall the notion of \emph{maximal triangulation} and its connection to the construction of $\ZZ$-simple torus fibrations. We concentrate particularly on the example of the quintic, studied in detail by Gross in \cite{GrossTopology}. In this construction, the discriminant locus is contained inside the two-dimensional faces of a (four-dimensional) polytope, and its restriction to each two-dimensional face $F$ is the dual complex to the one-skeleton of a fixed triangulation of $F$.

A maximal triangulation $\P$ of a polytope $\Xi\subset M_\RR$ is a simplicial decomposition of $\Xi$, such that the set of $0$-simplices is equal to the set integral points $\Xi \cap M$. A triangulation is \emph{projective} if there exists a strictly convex height function $h_\Xi \colon M_\RR \to \ZZ$ which is linear on each simplex $\tau \in T\Xi$, where $T\Xi$ denotes the affine tangent space of $\Xi$. By \cite[Proposition~$3$]{GKZ}, there exists a maximal projective triangulation on any lattice polytope $\Xi \subset M_\RR$. Moreover, a maximal projective triangulation of $\Xi$ determines an induced triangulation on the boundary $\partial \Xi$. Applying \cite[Theorem~$3.16$]{GrossBB} (see also \cite{HZci}), we can obtain \emph{simple} integral affine structure from any maximal projective triangulation. In the following example, we describe the base $B$ of a fibration of the quintic threefold as an integral affine manifold with simple singularities -- see also \cite[\S$19.3$]{GHJ} and \cite[Example~$6$]{CBM}. 

\begin{example}
\label{QuinticBase}
 Let $\Delta_{\PP^4}$ be the moment polytope of the toric variety $\PP^4$, with the usual anti-canonical polarization. $\Delta_{\PP^4}$ is the $4$-simplex given by the convex hull of the points 
\[P_1  =  (-1,-1,-1,-1) \,\ \,\ 
P_2  =  (4,-1,-1,-1) \,\ \,\ 
P_3  =  (-1,4,-1,-1) \]
\[
P_4  =  (-1,-1,4,-1) \,\ \,\ 
P_5  =  (-1,-1,-1,4). \]
We let $B := \partial \Delta_{\PP^4}$, denote the boundary of $\Delta_{\PP^4}$. That is, $B$ is the union of $5$ tetrahedra, glued along their common triangular faces. We illustrate $B$, together with a number of positive and negative vertices of the discriminant locus (in red) in Figure~\ref{fig:Base}.
\begin{figure}
	\resizebox{0.8\textwidth}{!}{
		\includegraphics{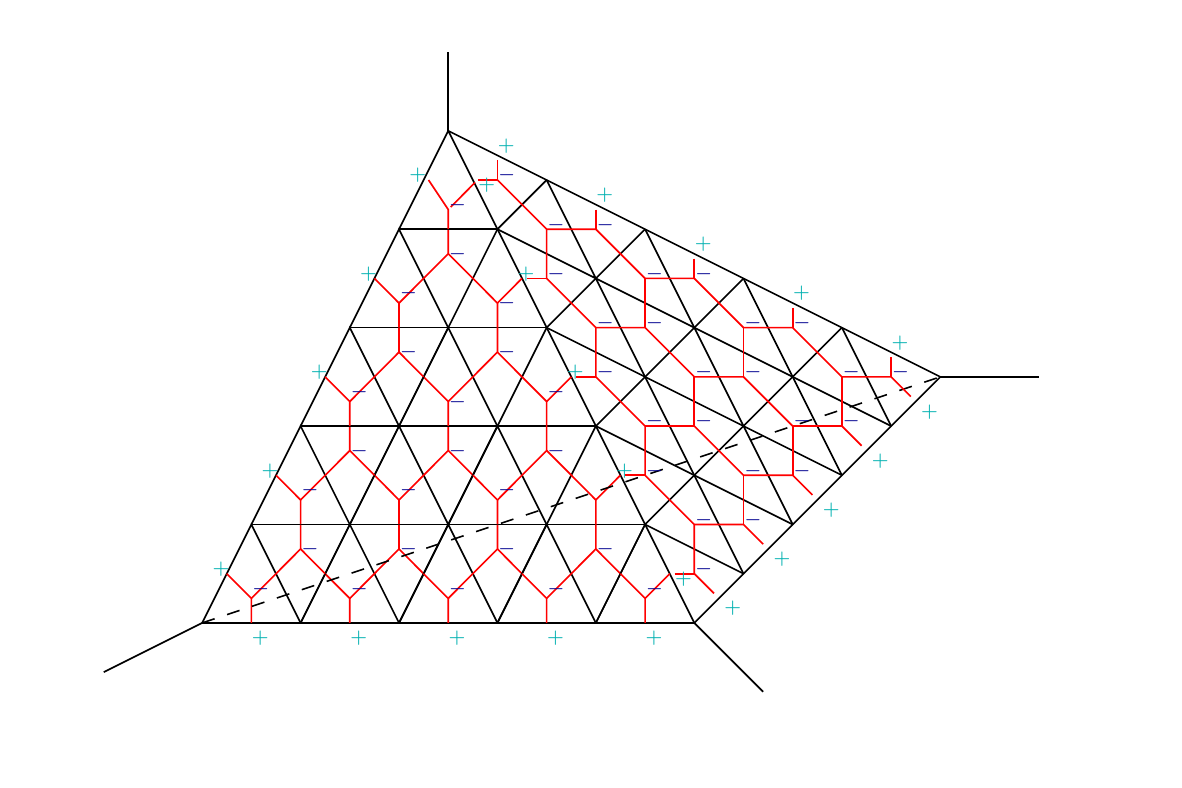}
	}
	\caption{Positive and negative vertices of $\Delta$.}
	\label{fig:Base}
\end{figure}
Note that the triangulation of $B$ induced by its description as the boundary of a polytope contains ten triangular faces, ten edges, and five vertices. We also note that $B$ is homeomorphic to a $3$-sphere.
Let $\sigma_{ijk}$ denote the triangular face of $\Delta_{\PP^4}$ spanned by the vertices $P_i$, $P_j$, and $P_k$. Fix a maximal projective triangulation of $\Delta_{\PP^4}$, and hence an (integral) triangulation of each face $\sigma_{ijk}$. Let $\Delta_{ijk}$ be the union of the one dimensional cells in the first barycentric subdivision of this triangulation of $\sigma_{ijk}$ which do not contain an integral point of $\sigma_{ijk}$. We note that the integral affine structure is determined by the triangulation on each triangular face $\sigma_{ijk}$. Finally, we fix the discriminant locus
\begin{equation}
\label{TheDiscriminantLocus}
\Delta:= \bigcup_{i,j,k} \Delta_{ijk}.
\nonumber
\end{equation}
The affine structure on $B_0 := B\setminus \Delta$ is described, for example, in \cite[p.~157]{GHJ}. Over the interior of each tetrahedron in $\partial \Delta_{\PP^4}$, the integral affine structure is obtained by restriction from the embedding $\Delta_{\PP^4} \hookrightarrow \RR^4$. 

We describe the affine monodromy $T_\gamma \in Gl_n(\ZZ)$, see Definition \ref{Def: affine monodromy}, around certain loops $\gamma\colon S^1 \to B \cong \partial \Delta_{\PP^4}$ based at a vertex $v$ of $\Delta_{\PP^4}$. First observe that, in an affine neighbourhood of $v$, the strata of $\partial \Delta_{\PP^4}$ containing $v$ form a fan isomorphic to the fan of $\PP^3$. We identify this neighbourhood of $v$ with a domain in $\RR^3$ with its standard integral affine structure, and let $\{e_i : i \in \{1,2,3\}\}$ denote the standard basis of $\RR^3$. We identify the ray generators of the edges of $\partial \Delta_{\PP^4}$, emanating from $v$, with the vectors
\[
\{ e_1,e_2,e_3,-e_1-e_2-e_3\}.
\]
We let $d_i := e_i$ for $i \in \{1,2,3\}$, and $d_4 := -e_1-e_2-e_3$. We also let $\tau_i$, $i \in \{1,\ldots,4\}$ denote the edge of $\Delta_{\PP^4}$ containing $v$ with tangent direction $d_i$ at $v$, and let $\sigma_{ij}$ denote the face of $\Delta_{\PP^4}$ containing the edges $\tau_i$ and $\tau_j$, for $i$,~$j \in \{1,\ldots,4\}$. Consider segments of the singular locus $\Delta$ contained in $\sigma_{ij}$ for $i$,~$j \in \{1,\ldots,4\}$, as shown in Figure~\ref{fig:loops}. We choose loops $\gamma_{ij,k}$, where $k \in \{i,j\}$, based at $v$ passing singly around the segment of the singular locus contained in $\sigma_{ij}$ which meets the ray indexed by $k$. Examples of such loops $\gamma_{ij,k}$ are also illustrated in Figure~\ref{fig:loops}. We orient the loops $\gamma_{ij,k}$ by insisting that the tangent vector of $\gamma_{ij,k}$ at the unique point (other than $v$) at which the image of $\gamma_{ij,k}$ intersects $\sigma_{ij}$ pairs positively with the normal vector $n_{ij}$ to $\sigma_{ij}$. Note that $n_{ij}$ is uniquely determined since $\sigma_{ij}$ is oriented, as the ray generators lying on $\tau_i$ and $\tau_j$ respectively form an ordered basis of the linear span of $\sigma_{ij}$. The monodromy in $TB_0$ around $\gamma_{ij,k}$ is given by the map
\[ 
v  \longmapsto v + \langle n_{ij},v \rangle \cdot d_k,
\]
as in \cite[\S$6.4$]{DBranes09}, and we denote the corresponding matrix by $T_{ij,k}$. For example, $T_{12,1}$ is given by the linear map $v \mapsto v + \langle (0,0,1),v \rangle \cdot (1 , 0,  0)$ and is represented by the matrix
\begin{align*}
\label{Eqn: monodromy matrices}
T_{12,1}  =  \left( \begin{matrix} 1&0&1\\ 0&1&0 \\0&0&1\\ \end{matrix}\right).
\end{align*}
Note that the face $\sigma_{ij}$ is invariant under the linear transformation $T_{ij,k}$. In Figure~\ref{fig:loops} we illustrate examples of various $\gamma_{ij,k}$ passing around an edge of the discriminant locus $\Delta$ adjacent to both a negative and a positive vertex. The monodromy matrices around edges adjacent to two negative vertices are described analogously.

We compute the action of each $(T^t_{ij,k})^{-1}$ on the $2$-torsion points $(\frac{1}{2}\ZZ^3)/\ZZ^3$
of the real two torus $\Hom(\Lambda_b,U(1))\cong \RR^3/\ZZ^3$, where $\Lambda$ is the sheaf of integral tangent vectors. This set is in one-to-one correspondence with the set of the following eight vectors.
\begin{align*}
u_0 & = (0, 0, 0),\quad 
u_1  =(0, 0, 1),\quad
u_2  =(1, 0, 1 ),\quad
u_3  =(1, 0, 0),\\
u_4 & =(1, 1, 0),\quad
u_5  =(0, 1, 0),\quad
u_6  =(0, 1, 1),\quad
u_7  =(1, 1, 1).
\end{align*}
Note that each $u_i \in \pi^{-1}(p)$ corresponds in Figure~\ref{fig:monodromy_cube} to a vertex labelled with $i$. 
\begin{figure}
	\includegraphics{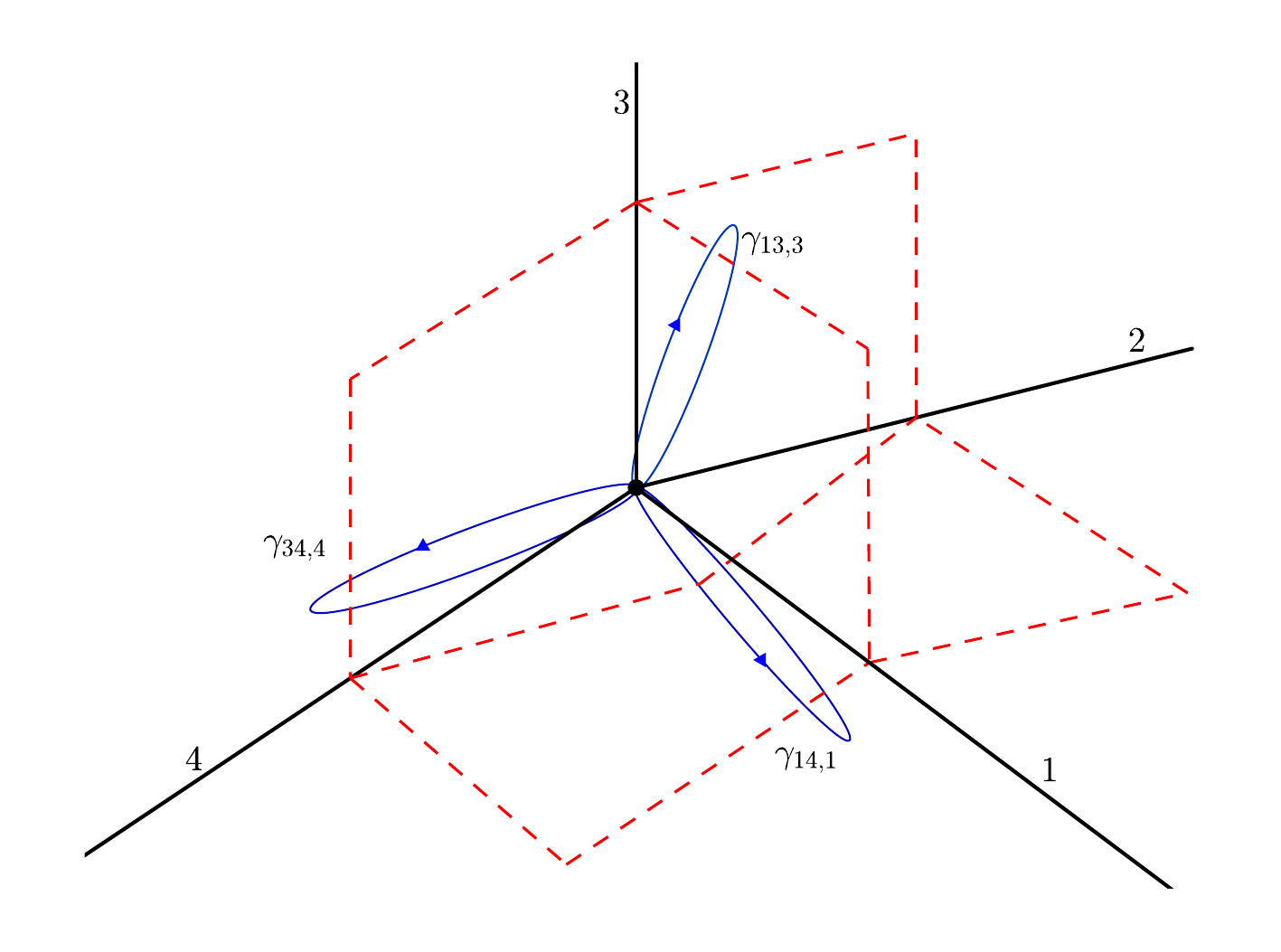}
	\caption{Examples of loops $\gamma_{ij,k}$.}
	\label{fig:loops}
\end{figure}
Observe that $u_0$ remains invariant under all the monodromy transformations $T'_{ij,k} := (T^t_{ij,k})^{-1}$. That is, $u_0$ defines a section of $f \colon X\to B$. Thus there is a connected component of the real Lagrangian $L_\RR \subset X$ homeomorphic to $S^3$. Considering the action of $T'_{ij,k}$ on the torsion points $u_i$, for $i=\{1,\ldots,7\}$, the permutations induced on the indices of these seven vectors form the following set of double transpositions: 
\begin{align*}
T'_{12,1}:(23)(47),  \quad & T'_{12,2}:(47)(56) \quad T'_{13,1}:(27)(34), \quad T'_{13,3}:(16)(27) \\ 
T'_{14,1}:(24)(37),  \quad & T'_{14,4}:(15)(37) \quad T'_{23,2}:(45)(67), \quad T'_{23,3}:(12)(67) \\ 
T'_{24,2}:(46)(57),  \quad & T'_{24,4}:(13)(57) \quad T'_{34,3}:(17)(26), \quad T'_{34,4}:(17)(35).
\end{align*}

Note that the triangulation on $B = \partial \Delta_{\PP^4}$ is maximal projective. There are, of course, many choices of such triangulations on $B=\partial \Delta_{\PP^4}$. Moreover, any choice of maximal projective triangulation corresponds to a choice of a crepant resolution of a hypersurface in the toric variety associated to the fan over the faces of ${\Delta}_{\PP^4}$, the mirror proposed in \cite{BBci,GP90}. All such crepant resolutions of the mirror hypersurface are related to each other by applying flips to the triangulation as described in Appendix~\ref{sec:Dehn_surgery}. In the remaining part of this section we show the invariance of the ranks of the mod $2$ cohomology groups of the real Lagrangians in mirror hypersurfaces under flips.
\end{example}

\subsection{Flipping the triangulation}
\label{sec:Dehn_surgery}
Let $B$ be an integral affine manifold with simple singularities, endowed with a maximal triangulation $\P$. We let $f$ and $\breve{f}$ denote the compactified torus fibrations, as described in \S\ref{sec:real_locus}. We describe the effect of a \emph{flip} of a triangulation on the real loci $L_\RR$ and $\breve{L}_\RR$. Since these descriptions are similar, and Theorem~\ref{Invariance under flips} applies to $\breve{L}_\RR$, we focus on this case.

\begin{figure}
	\includegraphics[scale=1.2]{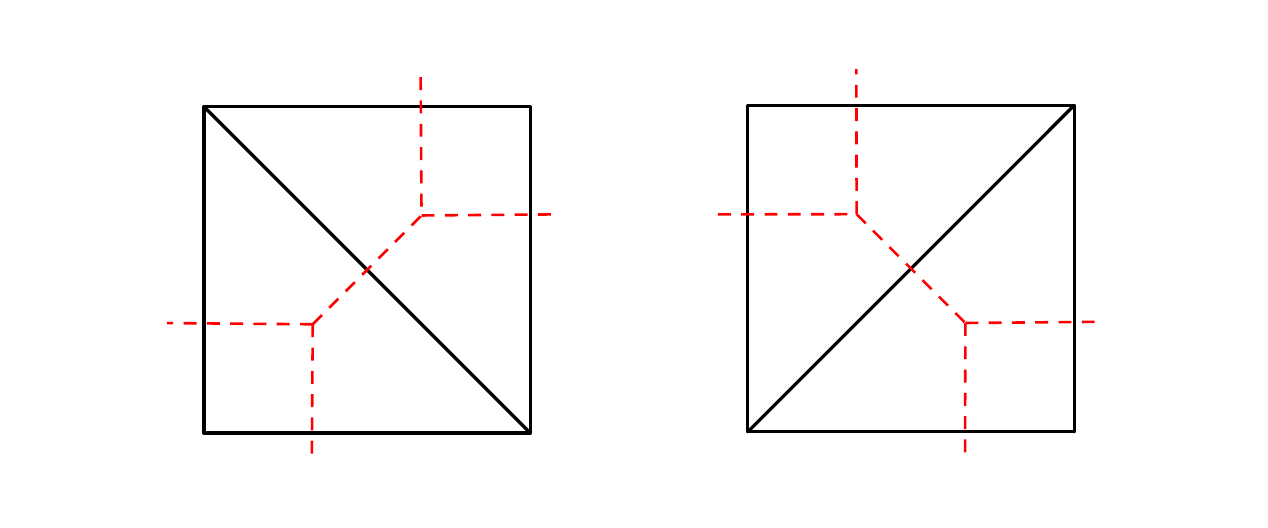}
	\caption{A flip of the triangulation $\P$}
	\label{fig:flip}
\end{figure}

\begin{definition} 
\label{def:flips}
Let $E$ be an edge of the triangulation $\P$ of $B$ such that $\Delta$ intersects $E$ in a single point, contained in a segment of $\Delta$ between a pair of negative vertices. Assume moreover that these two negative vertices are contained in two-dimensional faces $\tau_1$ and $\tau_2$ of $\P$ respectively. Moreover, we assume that the union $\tau = \tau_1 \cup \tau_2$ forms a quadrilateral which is strictly convex at its four vertices. A \emph{flip} of $\P$ is obtained by replacing $E$ with the diagonal between the two vertices of $\tau$ not contained in $E$. This induces a transformation of $\Delta$, illustrated in Figure~\ref{fig:flip}.
\end{definition}

We recall that, in considering the mirror fibration $\breve{f}$ to $f$, we do not change the integral affine structure on $B$, but the map $\breve{f}$ compactifies the torus fibration $TB_0/\Lambda \to B_0$, rather than $T^\star B_0/\breve{\Lambda}$, recalling that $\Lambda$ denotes the lattice of integral tangent vectors.

\begin{lemma}
\label{lem:conn_components}
Let $U \subset B$ be a sufficiently small neighbourhood of a segment of $\Delta$ between two negative vertices such that the closure $\shB$ of $U$ is homeomorphic to a $3$-ball. The preimage $\breve{\pi}^{-1}(\mathcal{B})$ has a connected component homeomorphic to a solid torus, and every other connected component of it is homeomorphic to a $3$-ball.
\end{lemma}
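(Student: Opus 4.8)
The plan is to analyze the local model of the fibration $\breve{f}$ over a neighbourhood of a segment of $\Delta$ connecting two negative vertices, and to trace through the action of the involution $x \mapsto -x$ on the $2$-torsion points fibrewise. First I would recall that over $B_0$, the real Lagrangian $\breve{L}_\RR$ is the $2^3$-to-$1$ covering given by the eight half-integral points in each torus fibre, and that the connected components of $\breve{\pi}^{-1}(\shB)$ are in bijection with the orbits of these eight points under the monodromy action of $\pi_1(\shB \cap B_0)$. Since $\shB$ deformation retracts onto the segment of $\Delta$ together with its two endpoint vertices, $\pi_1(\shB \cap B_0)$ is generated by loops around the three local strands of $\Delta$ emanating from each negative vertex, subject to the relations coming from the two vertices. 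The key point is that the mirror fibration $\breve{f}$ exchanges the roles of positive and negative vertices, so the monodromy relevant here is the \emph{positive}-vertex monodromy, whose action on the cube of $2$-torsion points is depicted in Figure~\ref{fig:monodromy_cube} (the middle picture).

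Next I would compute these orbits explicitly. The origin $u_0$ is fixed by every monodromy transformation (as already noted in Lemma~\ref{lem:connected_cmpts} and in Example~\ref{QuinticBase}), so it contributes a component mapping homeomorphically to $\shB$, hence a $3$-ball. The remaining seven points split into orbits under the group generated by the relevant transvections $T'_{ij,k}$; I would read off from the double-transposition data in Example~\ref{QuinticBase} (restricted to the two negative vertices bounding the segment, with the appropriate mirror reinterpretation so that we use the positive-vertex local model) that the seven nonzero points break into one orbit of size, say, $4$ — which is the one that will give the solid torus — together with several smaller orbits that give $3$-balls. The component corresponding to the size-$4$ orbit is a $4$-fold connected cover of $\shB$ branched along $\Delta \cap \shB$, and I would identify it with a solid torus by observing that it is obtained by gluing the local branched-cover pieces over each half of the segment: each piece is a branched cover of a $3$-ball along an arc, the branch locus over the segment being an honest circle in $\Delta$ only after the two vertex-contributions are assembled, and the monodromy around the full loop encircling the $\Delta$-segment cyclically permutes the four sheets, yielding $S^1 \times D^2$.

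The main obstacle, I expect, will be making the identification of the distinguished component with a solid torus fully rigorous rather than merely combinatorial: one must check that the local fibre models over the two negative vertices (as recalled in the enumerated list in \S\ref{sec:real_locus}, items (2)–(3), read through the positive/negative exchange for $\breve{f}$) glue along the intermediate generic-stratum model in a way compatible with the involution, and that the resulting $4$-sheeted branched cover is genuinely $S^1\times D^2$ and not some other Seifert-fibred piece. Here I would invoke the explicit descriptions of the branched cover over each local piece — over a non-vertex point of $\Delta$ the real locus is controlled by the single transvection, giving a double cover of a disk branched at a point, i.e.\ again a disk, crossed with the unbranched directions — and argue that the only nontrivial topology is introduced by going once around the $\Delta$-segment, contributing exactly the $S^1$ factor. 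I would then conclude by noting that every orbit other than the distinguished size-$4$ one has the property that the corresponding component is a branched cover of $\shB \cong D^3$ with branch locus an unknotted arc and trivial monodromy on that component's sheets, hence a $3$-ball, completing the proof.
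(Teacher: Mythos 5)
Your overall framework --- identifying the connected components of $\breve{\pi}^{-1}(\shB)$ with the monodromy orbits of the eight $2$-torsion points, noting that fixed points give sections (hence $3$-balls), and that the remaining points form a single orbit of size four whose component must be shown to be a solid torus --- matches the first half of the paper's proof. The paper makes the orbit computation explicit: the meridian monodromies around the four points of $\partial\shB\cap\Delta$ are the double transpositions $(56)(47)$, $(45)(67)$, $(56)(47)$, $(45)(67)$, so $p_0,\dots,p_3$ are fixed (giving four $3$-ball components) and $\{p_4,\dots,p_7\}$ is a single orbit. You assert this orbit structure without verifying it, which is a minor omission; the substantive issue lies elsewhere.

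The genuine gap is in your identification of the distinguished component $L_\shT$ with a solid torus. Your mechanism is that ``the monodromy around the full loop encircling the $\Delta$-segment cyclically permutes the four sheets, yielding $S^1\times D^2$.'' This is false: the image of the monodromy representation on $\{p_4,\dots,p_7\}$ is the Klein four-group generated by $(56)(47)$ and $(45)(67)$, which contains no $4$-cycle; the meridian of the central edge $e$ acts by the double transposition $(46)(57)$. So $L_\shT$ is not a cyclic branched cover, and the ``$4$-fold cyclic cover of a disk, crossed with $S^1$'' picture does not apply. You correctly flag this step as the main obstacle, but the sketch you give would not close it. The paper's argument is different and avoids the issue entirely: it first shows $\partial L_\shT\cong\TT^2$ by a Riemann--Hurwitz count for the $4$-fold cover of $\partial\shB\cong S^2$ branched at the four points $y_i$, with eight ramification points of index $2$; it then sweeps out $\shB$ by a family of $2$-spheres $h^{-1}(t)$, $t\in[0,1)$, collapsing onto the core edge $e$, exhibiting $L_\shT\setminus\breve{\pi}^{-1}(e)$ as $\TT^2\times[0,1)$ with $\breve{\pi}^{-1}(e)$ a circle, whence $L_\shT$ is a solid torus. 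To salvage your local-gluing approach you would need to replace the cyclic-cover claim by an argument of this kind, or by an explicit analysis of the branched cover associated to the Klein four-group action.
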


\begin{proof}
Consider two negative vertices of $\Delta$, connected by a line segment $e \subset \Delta$. Choose $U$ sufficiently small, such that it contains no trivalent points of $\Delta$ other than these two vertices and such that $\partial \mathcal{B} \cap \Delta$ consists of $4$ points, $y_1,y_2,y_3,y_4$.

Fix a base point $z \in B_0 \cap \partial \shB$, recalling that $B_0 = B \setminus \Delta$ denotes the smooth locus of $B$. We let $\{p_0, \ldots, p_7\}$ denote the points in $\breve{\pi}^{-1}(z)$, identifying $p_i$ with the point with label $i$ in the middle image of Figure~\ref{fig:monodromy_cube}. Let $\gamma_{y_i}$ denote a loop in $\partial \shB$, based at $z$ and tracing singly around the point $y_i$. By the monodromy computation in \S\ref{sec:real_locus}, the monodromy actions on $\breve{\pi}^{-1}(z) = \{p_0,\ldots, p_7\}$ corresponding to these loops are given by the following double transpositions.
\begin{align*}
	\gamma_{y_1}: (56)(47) && 	\gamma_{y_2}: (45)(67) && \gamma_{y_3}: (56)(47) && \gamma_{y_4} : (45)(67).
\end{align*}
Analysing the monodromy action around the loops $\gamma_{y_1}$ and $\gamma_{y_2}$, we see that the points $\{p_4, p_5, p_6, p_7\}$ lie in the same connected component $L_\shT$ of $\breve{\pi}^{-1}(\mathcal{B})$. Moreover, the four points $\{p_0,p_1,p_2,p_3\}$ are invariant under the monodromy action, and hence lie in sections of $\pi$ homeomorphic to $\shB$. Hence $L_\shT$ is a $4$-to-$1$ branched cover of $\shB$, branched over the part of the discriminant locus illustrated by red in Figure \ref{fig:meridians_in_B}. Restricting $\breve{\pi}$ to the boundary of $\shB$, the branch points $\{y_1,y_2,y_3,y_4\}$ are labelled in Figure~\ref{fig:complex_base}, which shows a triangulation of $\partial\shB$.

We note there are $8$ ramification points of index $2$ over $\partial \mathcal{B} \cong S^2$, and it follows from a standard Riemann--Hurwitz calculation that $\partial L_{\shT}$ is homeomorphic to the $2$-torus $\shT^2$. We identify $\shB$ with the standard $3$-ball, and fix a function $h \colon \shB \to [0,1]$ such that $h^{-1}(t) \cong S^2 \subset \shB$ for all $t \in [0,1)$, and $h^{-1}(1)=e$, where $e$ is the edge of the discriminant locus illustrated in Figure~\ref{fig:meridians_in_B}. Moreover, we require $h^{-1}(t)$ to intersect each line segment of $(\Delta \cap \shB) \setminus \{e\}$ transversely at a single point, for all $t \in [0,1)$. Thus the composition $h \circ \breve{\pi}|_{L_\shT}$ is a trivial $\mathbb{T}^2$ fibration over $[0,1)$. In other words, there is a homeomorphism $L_\shT \setminus C \to \mathbb{T}^2 \times [0,1)$, where $C$ is the circle $\breve{\pi}^{-1}|_{L_\shT}(e)$; it follows that $L_\shT$ is homeomorphic to a solid torus.
\end{proof}

\begin{proposition}
	\label{pro:flip}
	Performing a flip on the triangulation on $B$ induces a Dehn surgery on $\breve{L}_\RR$ with coefficient $2$.
\end{proposition}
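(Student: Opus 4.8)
The plan is to analyze how the flip changes the base $B$ locally, and to show that the corresponding change in $\breve{L}_\RR$ is exactly the operation of cutting out a solid torus and regluing it along an essential curve that is twice a meridian. The key is Lemma~\ref{lem:conn_components}, which already identifies the relevant connected component $L_\shT$ of $\breve{\pi}^{-1}(\shB)$ as a solid torus $D^2 \times S^1$, where $S^1 = \breve{\pi}^{-1}|_{L_\shT}(e)$ is the core circle lying over the edge $e$ of $\Delta$ between the two negative vertices, and the boundary $\partial L_\shT$ is a $2$-torus branch-covering $\partial\shB \cong S^2$ four-to-one, ramified over $y_1,y_2,y_3,y_4$. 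First I would observe that the flip affects the affine structure (hence the torus fibration, hence $\breve{L}_\RR$) only inside the ball $\shB$: outside $\shB$ the triangulation, the integral affine structure, and the fibration $\breve{f}$ are all unchanged, so $\breve{L}_\RR \setminus L_\shT$ is canonically identified before and after the flip. Thus the flip replaces $L_\shT$ by the corresponding component $L'_\shT$ of the post-flip picture, reglued along the same boundary torus $\partial L_\shT = \partial L'_\shT \to \partial \shB$.

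Next I would identify the gluing. Since both $L_\shT$ and $L'_\shT$ are solid tori (Lemma~\ref{lem:conn_components} applies verbatim to the flipped picture, as the flip again produces a segment of $\Delta$ between two negative vertices, now running in the transverse direction inside $\tau = \tau_1 \cup \tau_2$), the effect on $\breve{L}_\RR$ is by definition a Dehn surgery: remove a solid-torus neighbourhood and glue back a solid torus. What remains is to compute the surgery coefficient, i.e. which curve on $\partial L_\shT$ bounds a disc in the new solid torus $L'_\shT$. I would do this by comparing meridians. The meridian $m$ of the old solid torus $L_\shT$ is the curve that bounds a disc; under the four-to-one branched cover $\partial L_\shT \to \partial \shB$ it maps to a curve on $S^2$ separating the two branch points coming from one diagonal (say $y_1, y_3$) from the other two. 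After the flip the roles of the diagonals of the quadrilateral $\tau$ are interchanged, so the new meridian $m'$ of $L'_\shT$ covers the curve separating $\{y_2,y_4\}$ from $\{y_1,y_3\}$ — a curve which on $S^2$ is isotopic to the image of $m$, but whose preimage in the torus $\partial L_\shT$ is a different homology class because the monodromy data (the double transpositions $\gamma_{y_i}$) reshuffle the four sheets. Tracking the lifts using the explicit transpositions $(56)(47)$ and $(45)(67)$ of Lemma~\ref{lem:conn_components}, a curve that is a simple loop downstairs lifts to a loop wrapping twice in the $S^1$-direction upstairs — this is the source of the coefficient $2$. I would make this precise by choosing an explicit basis $(m,\ell)$ of $H_1(\partial L_\shT)$ (meridian and longitude $\ell = $ the core circle pushed to the boundary) and computing $[m']$ in this basis via the covering.

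The main obstacle I expect is the last computation: pinning down $m'$ as $2\ell + m$ (or $2\ell$, depending on framing conventions) rather than just "some $(2,\cdot)$ curve". This requires carefully fixing orientations and a framing of the longitude that is consistent before and after the flip, and then honestly lifting an isotopy on $S^2$ through the branched cover while bookkeeping the sheet-permutations around each $y_i$. The cleanest route is probably to build, as in the proof of Lemma~\ref{lem:conn_components}, the explicit trivialization $L_\shT \setminus C \cong \mathbb{T}^2 \times [0,1)$ from the sweep-out $h\colon \shB \to [0,1]$, read off the identification $\mathbb{T}^2 \cong \partial L_\shT$ it induces, and then note that the flipped sweep-out $h'$ (sweeping $\shB$ by spheres collapsing onto the other diagonal $e'$) gives a second trivialization; the transition between the two framings on $\mathbb{T}^2$ is the matrix $\left(\begin{smallmatrix} 1 & 0 \\ 2 & 1\end{smallmatrix}\right)$, from which the surgery coefficient $2$ follows. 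Everything else — that the complement is unchanged, that both pieces are solid tori, that the operation is a genuine Dehn surgery — is formal given the results already established.
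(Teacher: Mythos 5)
Your proposal follows essentially the same route as the paper: localize the change to the solid-torus component $L_\shT$ over $\shB$ (Lemma~\ref{lem:conn_components}), observe that the complement is untouched so the operation is a Dehn surgery, and compute the class of the new meridian on $\partial L_\shT$ as $2l\pm m$, with the factor $2$ traced back to the sheet permutations of the branched cover. The computation you flag as the main obstacle is exactly what the paper carries out, by lifting explicit arcs between the branch points $y_i$ --- $\gamma_1$ (whose lift is the old meridian $m$), $\gamma'_1$ (whose lift is the new meridian), and $\gamma_2$ (whose lift has class $2l$) --- and using the homotopy $\gamma_2\simeq\gamma_1\ast\gamma'_1$ in $\partial\shB$ to conclude that the new meridian has class $2l-m$.
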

\begin{proof}

\begin{figure}
	\includegraphics{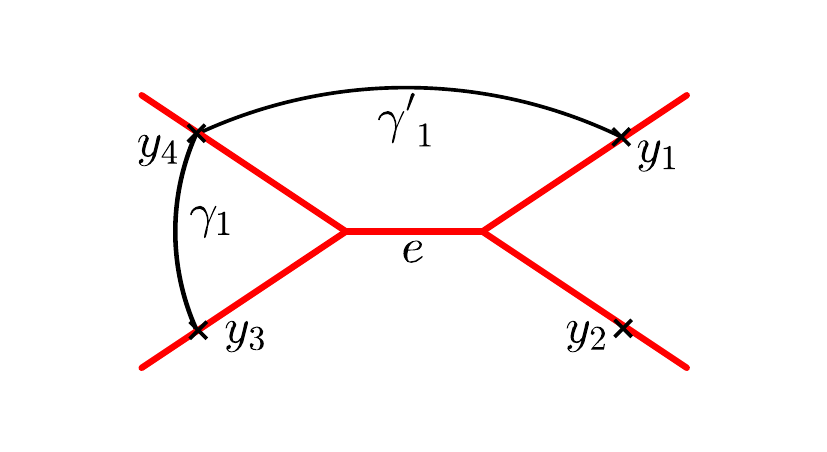}
	\caption{Curves $\gamma_1$ and $\gamma'_1$ in $\partial\shB$.}
	\label{fig:meridians_in_B}
\end{figure}

Recall that $L_\mathcal{T} \subset \breve{L}_\RR$ denotes the connected component in $\breve{\pi}^{-1}(\mathcal{B})$ homeomorphic to a solid torus. We let $\tilde{\pi}$ denote the restriction of $\breve{\pi} \colon \breve{L}_\RR \to \shB$ to $L_\shT$. Let $\gamma_1$ be the path contained $\partial \shB$ illustrated in Figure~\ref{fig:complex_base} and Figure~\ref{fig:meridians_in_B}, such that $\gamma_1(0) = y_3$, $\gamma_1(1) = y_4$. We will show that $\tilde{\pi}^{-1}(\gamma_1)$ is a meridian curve in $\partial L_\mathcal{T}$ of class $m$, while the inverse image of $\tilde{\pi}^{-1}(\gamma'_1)$ is of class $2l-m$, where $l$ denotes the class of a longitude.

To show this, we choose triangulations of $\partial L_\shT \cong \mathbb{T}^2$, and $\partial \mathcal{B} \cong S^2$, as illustrated in Figure~\ref{fig:complex} and Figure~\ref{fig:complex_base} respectively. Note that the points labelled $Y_i$ (respectively $A_i$) in Figure~\ref{fig:complex} map to points $y_i$ (respectively $a_i$) in Figure~\ref{fig:complex_base}. Hence $\tilde{\pi}^{-1}(\gamma_1)$ is the curve in $\partial L_\mathcal{T}$ given by the union of vertical line segments between the points labelled $Y_3$ and $Y_4$.

Observe that the curve $\tilde{\pi}^{-1}(\gamma_1)$ bounds a disc in $L_\shT$, given by the preimage of a triangular region bounded by $\gamma_1$ and the segments of $\Delta \cap \shB$ which contain $y_3$ and $y_4$. It follows that the curve $\tilde{\pi}^{-1}(\gamma_1)$ is a meridian curve for $L_\shT$.

Let $\gamma'_1  \subset \partial \mathcal{B}$ be the curve with $\gamma'_1(0)=y_4$, and $\gamma'_1(1)=y_1$ which is shown in Figure~\ref{fig:meridians_in_B} and Figure~\ref{fig:complex_base}. It follows from the definition of a flip, as illustrated in Figure~\ref{fig:flip}, that curve $\tilde{\pi}^{-1}(\gamma'_1)$ is a meridian curve after performing a flip. Hence, it suffices to show that $\tilde{\pi}^{-1}(\gamma'_1)$ represents a curve of class $2l-m$. To show this, consider the curve $\gamma_2 \in \partial \mathcal{B}$ with $\gamma_2(0)=y_3$, and $\gamma_2(1)=y_1$, as illustrated in Figure~\ref{fig:complex_base}. The inverse image of $\gamma_2$ under $\tilde{\pi}$ is illustrated in Figure~\ref{fig:complex} by the union of the horizontal line segments between $Y_1$ and $Y_3$. Each of these curves defines a choice of longitude in $\partial L_\shT$, and an orientation of $\tilde{\pi}^{-1}(\gamma_2)$ represents a curve of class $2l$. Note that $\gamma_2$ is homotopic to the concatenation of $\gamma_1$ and $\gamma'_1$ in $\partial \shB$. Hence, $\tilde{\pi}^{-1}(\gamma'_1)$ is of class $2l-m$, from which the result follows.

\begin{figure}
	\includegraphics[scale=1.0]{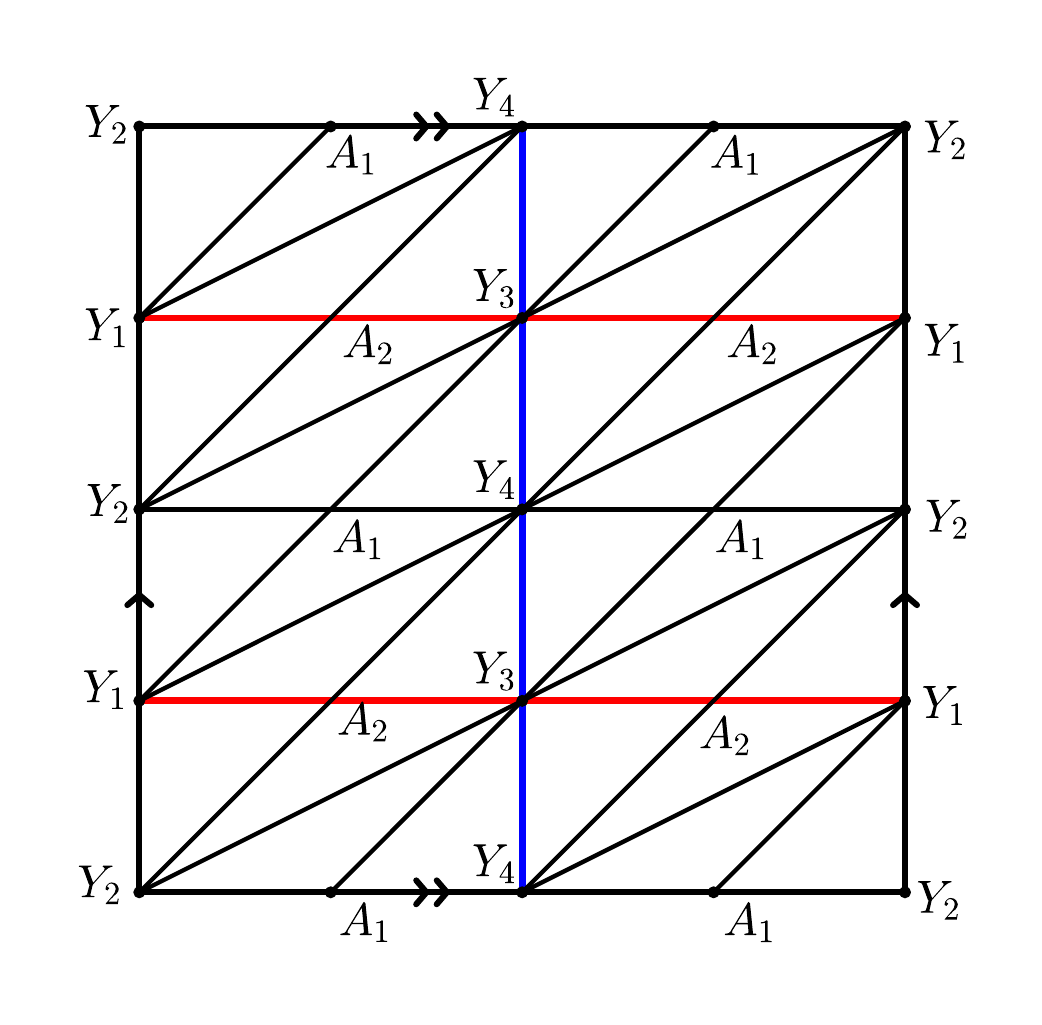}
	\caption{Triangulation of $\partial L_\mathcal{T}$.}	
	\label{fig:complex}
\end{figure}

\begin{figure}
	\includegraphics{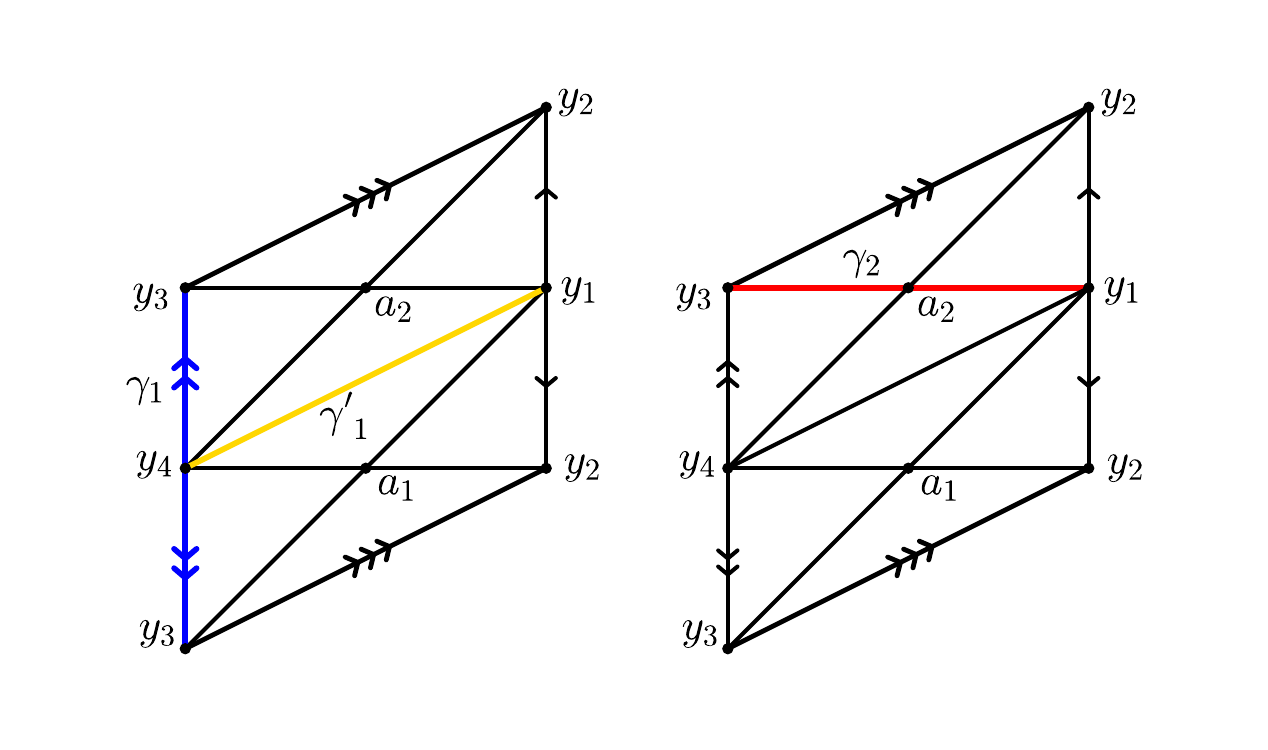}
	\caption{Triangulation of $\partial \mathcal{B}$ showing curves $\gamma_1$, $\gamma'_1$, and $\gamma_2$.}	
	\label{fig:complex_base}
\end{figure}

%\begin{figure}
%	\includegraphics{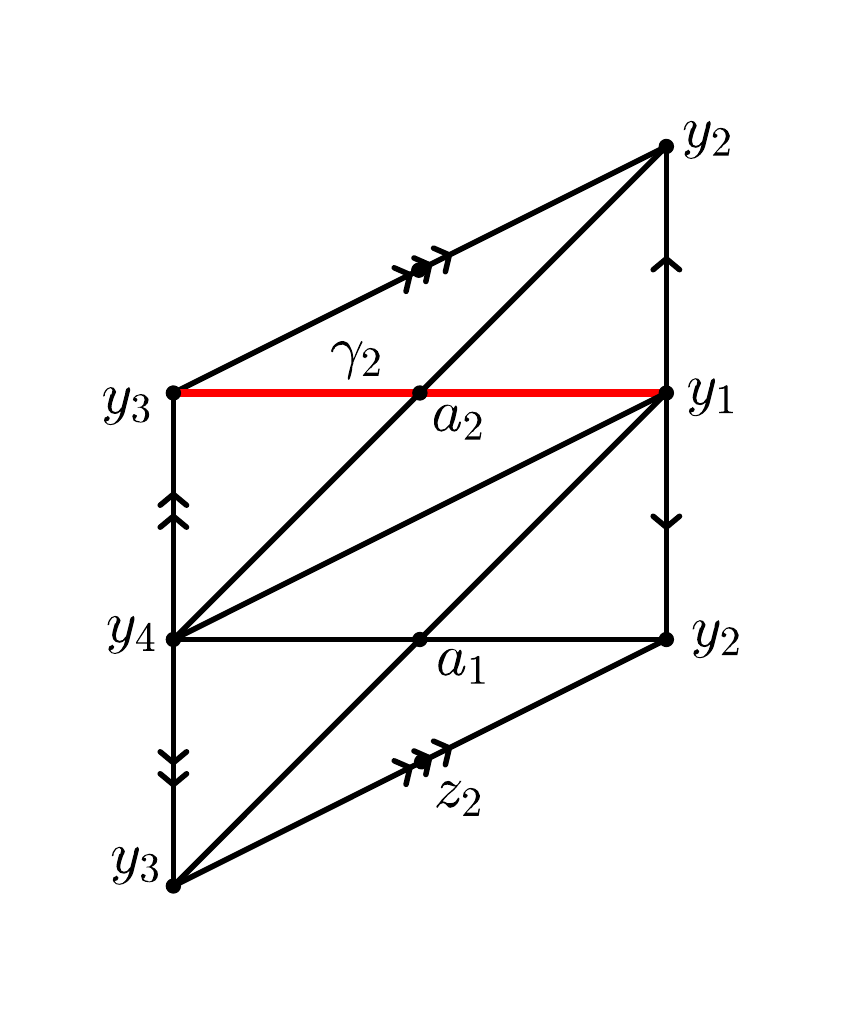}
%	\caption{Triangulation of $\partial \mathcal{B}$ showing curve $\gamma_2$.}	
%	\label{fig:longitude}
%\end{figure}

\end{proof}

\begin{theorem}
\label{Invariance under flips}
	The dimension of $H^1(\breve{L}_\RR,\ZZ_2)$ remains invariant under flipping the triangulation on $B$.
\end{theorem}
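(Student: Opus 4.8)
The plan is to deduce the theorem from Proposition~\ref{pro:flip} by a Mayer--Vietoris computation with $\ZZ_2$ coefficients, the underlying point being that a Dehn surgery with \emph{even} framing coefficient leaves mod $2$ homology unchanged. First I would record the decomposition furnished by Lemma~\ref{lem:conn_components}: $\breve{L}_\RR$ is obtained by gluing the solid torus $L_\shT \cong S^1 \times D^2$ --- the unique such component of $\breve{\pi}^{-1}(\shB)$, a closed tubular neighbourhood of the circle $C = \tilde{\pi}^{-1}(e)$ --- to the compact $3$-manifold $N := \overline{\breve{L}_\RR \setminus L_\shT}$ along the torus $\partial L_\shT = \partial N$. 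By Proposition~\ref{pro:flip}, flipping the triangulation performs a Dehn surgery on $\breve{L}_\RR$ along $C$, so $N$ is untouched, and the flipped Lagrangian is $\breve{L}_\RR' = N \cup L_\shT'$, where $L_\shT' \cong S^1\times D^2$ is reglued along $\partial N$ with its meridian identified with a curve of class $2l - m \in H_1(\partial N,\ZZ)$; here $(m,l)$ is the meridian--longitude basis for the original gluing of $L_\shT$ computed in the proof of Proposition~\ref{pro:flip}, with $m = [\tilde{\pi}^{-1}(\gamma_1)]$ and $2l = [\tilde{\pi}^{-1}(\gamma_2)]$.

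Next I would compare the Mayer--Vietoris sequences of the two decompositions with $\ZZ_2$ coefficients. Since $\ZZ_2$ is a field, universal coefficients gives $\dim_{\ZZ_2} H^1(-,\ZZ_2) = \dim_{\ZZ_2} H_1(-,\ZZ_2)$, so it suffices to compare homology. Both sequences involve the same groups $H_\bullet(N,\ZZ_2)$, $H_\bullet(\partial N,\ZZ_2)$ and $H_\bullet(S^1\times D^2,\ZZ_2)$, and the same map $H_\bullet(\partial N,\ZZ_2) \to H_\bullet(N,\ZZ_2)$; the only place they could differ is in the inclusion-induced maps $\iota_\bullet\colon H_\bullet(\partial N,\ZZ_2) \to H_\bullet(L_\shT,\ZZ_2)$ and $\iota'_\bullet\colon H_\bullet(\partial N,\ZZ_2) \to H_\bullet(L_\shT',\ZZ_2)$. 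In degrees $0$ and $2$ these agree trivially. In degree $1$, $\iota_1$ is the unique surjection $H_1(\partial N,\ZZ_2) \cong \ZZ_2^2 \twoheadrightarrow \ZZ_2$ with kernel spanned by the meridian class $[m]$, while $\iota'_1$ is the unique such surjection with kernel spanned by $[2l - m]$; but $2l - m \equiv m \pmod 2$, so $\iota_1 = \iota'_1$. Hence the maps $H_\bullet(\partial N,\ZZ_2) \to H_\bullet(N,\ZZ_2) \oplus H_\bullet(L_\shT,\ZZ_2)$ in the two Mayer--Vietoris sequences coincide, and exactness forces $\dim_{\ZZ_2} H_k(\breve{L}_\RR,\ZZ_2) = \dim_{\ZZ_2} H_k(\breve{L}_\RR',\ZZ_2)$ for all $k$ (each side equals $\dim_{\ZZ_2}\coker + \dim_{\ZZ_2}\ker$ of consecutive such maps). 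In particular $\dim_{\ZZ_2} H^1(\breve{L}_\RR,\ZZ_2) = \dim_{\ZZ_2} H^1(\breve{L}_\RR',\ZZ_2)$, which is the assertion.

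The substantive input --- hence the main obstacle, although it is already packaged in the results cited above --- is the identification of the flip with a Dehn surgery supported inside $L_\shT$, leaving $N$ unchanged, and with new meridian of class $2l - m$: this combines Proposition~\ref{pro:flip} with the structure of $\breve{\pi}^{-1}(\shB)$ in Lemma~\ref{lem:conn_components}, so it reduces to bookkeeping about which piece of $\breve{L}_\RR$ is modified. It is crucial that the longitude appears with an \emph{even} coefficient, since this is exactly what makes $2l - m \equiv m \pmod 2$ and hence preserves the $\ZZ_2$-homology; an odd coefficient would in general change $\dim_{\ZZ_2} H^1$. Granting this, the Mayer--Vietoris step is routine. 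One could alternatively invoke the general fact that integer Dehn surgery with even coefficient does not alter $\ZZ_2$-homology, but the argument above is self-contained.
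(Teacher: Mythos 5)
Your proof is correct and follows essentially the same route as the paper: a Mayer--Vietoris comparison for the decomposition of $\breve{L}_\RR$ along $\partial L_\shT$, using that the complement of the solid torus is unchanged by the surgery and that the new meridian class $2l-m$ reduces to $m$ modulo $2$, so the gluing map on $\ZZ_2$-homology is unaffected. Your write-up is somewhat more detailed (explicitly treating all degrees and the passage from homology to cohomology), but the key idea and decomposition are identical to the paper's.
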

\begin{proof}
	Consider the following Mayer--Vietoris sequence, associated to $L_\shT$ and its complement in $\breve{L}_\RR$,
	\[
	H_1(\partial L_\shT,\ZZ_2) \lra H_1(\breve{L}_\RR \setminus L_\shT, \ZZ_2)\oplus \ZZ_2 \lra H_1(\breve{L}_\RR,\ZZ_2).
	\]
	Note that $\dim H_1(\breve{L}_\RR \setminus L_\shT, \ZZ_2)$ is invariant under Dehn surgery, and hence surgery only changes $\dim H_1(\breve{L}_\RR,\ZZ_2)$ if the rank of the first map is different before and after the Dehn surgery. Observe that the kernel of this map is contained in the subspace spanned by the class of a meridian curve. However, since $2l+m = m \in H_1(\partial L_\shT,\ZZ_2)$, the rank of this map is unaffected by the Dehn surgery.
\end{proof}

\begin{remark}
	Repeating the analysis for $L_\RR$, we find that there is again a unique component in $\pi^{-1}(\mathcal{B})$ which is homeomorphic to a solid torus. In this case performing a flip induces a Dehn surgery of coefficient one.
\end{remark}

\begin{remark}
	In addition to flipping there is another important operation visible on the level of the torus fibration, \emph{conifold transition}. This operation changes the topological type of the total space of the torus fibration, and its connection with affine structures was explored by Casta\~no-Bernard--Matessi in~\cite{CBM3}. It follows directly from our analysis in this section that this operation also induces a Dehn surgery on the corresponding real Lagrangians.
\end{remark}

%!TEX root = paper.tex
%----------------------------------------------------------------------
\section{Real Lagrangians in toric degenerations}
\label{sec:Kato-Nakayama}

This Appendix has considerable overlap with the paper~\cite{AS}, where we outline an alternative construction of the real Lagrangians, using the \emph{Kato--Nakayama space} \cite{KN}. For this we consider real toric degenerations of Calabi--Yau varieties, and describe the topology of the real loci in these degenerations, explicitly via the Kato-Nakayama space of the central fibre, regarded as a log space. This offers a number of advantages; in particular it replaces a single real Lagrangian in a Calabi--Yau compactification with a moduli space of real loci. Moreover, this construction is intrinsic to the Gross--Siebert algorithm~\cite{GS2}, which is an algebro-geometric approach the SYZ conjecture in mirror symmetry. It thus provides a systematic way of introducing real structures into a fundamental mirror symmetry construction. 

One of the main achievements of this construction is to build the coordinate ring of a mirror family of Calabi--Yau varieties, starting from combinatorial data on an integral affine manifold with singularities, $B$ \cite{GS2}. For a broad overview of this data see \cite[\S$1.1$]{ruddat2019period}. The idea is to first construct from $B$ a toric log Calabi--Yau space $X_0$, that will serve as the special fiber of the family, whose intersection complex is given by $B$. A toric log Calabi--Yau space is, in this context, a union of toric varieties glued pairwise torically along toric prime divisors, with the requirement that around the zero dimensional strata $X_0$ is toroidal \cite{GS3}. We endow $X_0$ with a log structure, obtained from the \emph{gluing data} on $B$. As it forms distinct machinery, we will not recall the theory of log structures here, and encourage the interested reader to look at \cite{K,O}. However, we discuss the gluing data shortly. 

The main results of \cite{GS2} provide a systematic way, based on a wall--crossing algorithm taking place on $B$, to build the homogeneous coordinate ring for the family of Calabi--Yau varieties with degenerate limit $X_0$, when $B$ is \emph{positive} and \emph{simple}. We refer to \cite{GHS} for a version of this result in greater generality. We recall that simple integral affine manifolds were described in \S\ref{sec:integral_affine_manifolds}, while positive integral affine manifolds are defined in \cite[\S$1.5$]{GS1}, and this condition ensures that the affine structure comes from a degeneration situation. The family obtained by this construction in \cite{GS2}, with special fiber equal to the toric log Calabi--Yau space $X_0$, is referred to as a \emph{toric degeneration} \cite{GS3}. Note that the construction of the toric degeneration from $B$ can be carried out either by viewing $B$ as the intersection complex of a toric log Calabi--Yau space, or as the dual intersection complex, as explained in \cite[\S$2$]{GS1}. $B$ is viewed as the former in \cite{AS}, which roughly equates with considering the cotangent lattice on $T^\star B_0$, rather than the tangent lattice, as in \cite[\S$5$]{GS1}. The construction of a toric degeneration from $B$ depends on choices of \emph{lifted gluing data}. This data is given by elements
\[ 
s \in H^1(B, \iota^\star \breve{\Lambda} \otimes \CC^\star),
\]
where $B_0:=B \setminus \Delta$, $\iota\colon B_0 \hookrightarrow B$ is the inclusion map, and $\breve{\Lambda} \subset T^\star B_0$ is the lattice integral cotangent vectors, see \cite[\S$5$]{GS1}. It is shown in \cite[Theorem~$5.4$]{GS1} that one can obtain a moduli space of toric log Calabi--Yau spaces $X_0$ with intersection complex $B$ from a simple and positive integral affine manifold with singularities $B$, and this moduli space is parametrised by the lifted gluing data on $B$. Lifted gluing data induces both open gluing data, which determines $X_0$ as a log space by \cite[Theorem~$5.2$, Theorem~$5.4$]{GS1}, and closed gluing data, which is the data parametrising the gluings of irreducible components of $X_0$. Hence lifted gluing data determines not only the log structure, but also the scheme structure on $X_0$ \cite[Remark~$2.33$]{GS1}. For precise definitions of lifted, open and closed gluing data we refer to \cite[\S$5$]{GHS}.

Under mild assumptions on the lifted gluing data, the toric log Calabi--Yau space $X_0$ can be constructed as a projective scheme \cite[Theorem~$2.34$]{GS1}, and in this case the reconstruction result, as outlined in \cite[\S A.$2$]{GHS}, gives rise to a projective family. In particular, in the projective setting, the general fiber is not only a complex scheme, but also admits a natural symplectic structure, and the real locus is a real Lagrangian submanifold. Note that Calabi--Yau threefolds we consider throughout this paper automatically satisfy the necessary projectivity assumption in \cite[Theorem~$2.34$]{GS1}.

In \cite{AS}, the first named author and Siebert study the real locus in the general fiber of a toric degeneration, by studying the \emph{Kato--Nakayama space} or the \emph{Betti realization} over the special fiber $X_0$. The Kato--Nakayama space of a log scheme $X$, denoted by $X^{\mathrm{KN}}$, is a topological space constructed in \cite{KN}, and the topology on $X^{\mathrm{KN}}$ can be defined as in \cite[Chapter V, Definition~$1.2.4$]{O}. We summarise some of the results of \cite{AS} in the following theorem.

\begin{theorem}
\label{Thm: AS}
Let $\shX \to \Spec R$ be a toric degeneration with special fiber $X_0$, where $R$ is a discrete valuation ring. 
\begin{itemize}
\item[i)]
\label{i} 
There is a map $\delta\colon X_0^{\mathrm{KN}} \to  S^1$, such that a general fiber of an analytic model of $\shX$ is homeomorphic to the restriction of the Kato--Nakayama space $X_0^{\mathrm{KN}}$ to $\delta^{-1}(\phi)$, denoted by $X_0^{\mathrm{KN}}(\phi)$, for any $\phi \in S^1$  \emph{\cite[\S$4$]{AS}}.
\item[ii)]
\label{ii} 
There is a natural projection map $\pi: X_0^{\mathrm{KN}} \to X_0$, and  an abstract moment map $\mu \colon X_0 \to B$, such that the composition
\[
\mu \circ \pi \colon X_0^{\mathrm{KN}} \to B
\]
is a topological torus fibration. This induces a torus fibration on $X_0^{\mathrm{KN}}(\phi)$ \emph{\cite[\S$4$]{AS}}.
\item[iii)] 
\label{iii} 
The toric log
Calabi-Yau space $(X_0,\M_{X_0})$ defined by lifted gluing data is a real log space, and carries a standard real structure, if
and only if the lifted gluing data lies in the image of
\[
H^1(B, \iota_\star\breve\Lambda\otimes\RR^\times) \lra 
H^1(B,\iota_\star \breve\Lambda\otimes\CC^\times).
\]
\emph{\cite[Corollary $4.21$]{AS}}
\item[iv)]
\label{iv}
 A real structure on the toric log Calabi--Yau space $X_0$ lifts to a real structure on $X_0^{\mathrm{KN}}$, and induces a real structure on $X_0^{\mathrm{KN}}(1)$ \emph{\cite[\S$3$]{AS}}.
\item[v)]
\label{v}
The restriction of $X_0^{\mathrm{KN}}(1)$ to the real locus, exhibits the real locus in $\shX_t$ as a surjection over $B$, with finite fibers. Away from the discriminant locus this surjection is a topological covering map with fibres of cardinality $2^n$, where $n=\dim B$ \emph{\cite[Proposition~$4.23$]{AS}}.  
\end{itemize}
\end{theorem}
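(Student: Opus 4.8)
The plan is to realise all five assertions through the functorial properties of the Kato--Nakayama construction applied to the log structures arising from the toric degeneration $\shX \to \Spec R$, following \cite{KN, O}. Since $R$ is a discrete valuation ring, its spectrum carries the standard log point structure, and pulling this back endows $X_0$ with the log structure $\M_{X_0}$ determined by the gluing data on $B$; the whole argument then reduces to analysing $X_0^{\mathrm{KN}}$ together with the $S^1$-valued map $\delta$ induced by the log structure of the base. All five statements are established in detail in \cite{AS}, and my proposal is to indicate the structure of those arguments.

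First I would establish (i). The map $\delta\colon X_0^{\mathrm{KN}} \to S^1$ is the Kato--Nakayama realisation of the structure morphism to the log point, and the claim that a general fibre of an analytic model is homeomorphic to $X_0^{\mathrm{KN}}(\phi)$ is a log version of the nearby-cycle theorem: for a log smooth degeneration, the Betti realisation of the special fibre recovers the topological type of a general fibre, and $\delta^{-1}(\phi)$ models the monodromy-twisted smoothing. I would invoke this comparison as in \cite[\S$4$]{AS}. This is the analytically deepest input, and I expect it to be the main obstacle, since it requires controlling the topology of the degeneration uniformly over all of $X_0$, including the toroidal zero-dimensional strata.

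Next, for (ii), I would build the abstract moment map $\mu\colon X_0 \to B$ by gluing the moment maps of the toric irreducible components of $X_0$; since these components are glued pairwise torically along toric prime divisors, the local moment maps agree on overlaps and assemble into a map onto the intersection complex $B$. Composing with the projection $\pi\colon X_0^{\mathrm{KN}} \to X_0$, over the interior of a maximal cell the toric piece is $(\CC^\times)^n$, whose moment-map fibre is an $n$-torus while the Kato--Nakayama angular data supplies the remaining circle directions; one then checks that the fibre of $\mu \circ \pi$ is a topological $\TT^n$, and the induced torus fibration on $X_0^{\mathrm{KN}}(\phi)$ follows by restriction along $\delta$. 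The behaviour over the singular strata of $B$ is governed by the same local toric models underlying the simplicity hypothesis of \S\ref{sec:integral_affine_manifolds}.

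Finally I would treat the real structures (iii)--(v) together. Complex conjugation acts on $\CC^\times$-valued cocycles, and I would show that a standard real structure on $(X_0,\M_{X_0})$ exists precisely when the lifted gluing data is conjugation invariant, that is, when it lies in the image of $H^1(B,\iota_\star\breve{\Lambda}\otimes\RR^\times) \to H^1(B,\iota_\star\breve{\Lambda}\otimes\CC^\times)$, by matching the fixed points of conjugation on the classifying cohomology with the obstruction to a real gluing; this gives (iii). By functoriality of the Kato--Nakayama space, the anti-holomorphic involution lifts to an involution on $X_0^{\mathrm{KN}}$ compatible with $\delta$, whose fixed locus meets the fibre over $1 \in S^1$, yielding (iv). For (v), restricting to the real locus and using the torus fibration of (ii), away from $\Delta$ each fibre $\TT^n$ carries the involution $x \mapsto -x$, whose $2^n$ fixed points are exactly the half-integral points; hence the real locus surjects onto $B$ as a $2^n$-fold covering branched over the discriminant, which completes the argument along the lines of \cite[Corollary~$4.21$, Proposition~$4.23$]{AS}.
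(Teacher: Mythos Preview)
The paper does not prove this theorem at all: it is presented in Appendix~\ref{sec:Kato-Nakayama} explicitly as a summary of results from \cite{AS}, with each item carrying a citation to the relevant section or statement of that paper, and no argument is given beyond those pointers. So there is no ``paper's own proof'' to compare against; the authors are simply quoting external results.

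Your proposal is a reasonable high-level outline of how one expects the arguments in \cite{AS} to run, and the broad strokes---Kato--Nakayama realisation of the log morphism to the base for the map $\delta$, gluing of toric moment maps for $\mu$, conjugation-invariance of gluing data for the real structure, and the $x\mapsto -x$ involution on torus fibres for the $2^n$ count---are all the right ingredients. That said, it remains a sketch rather than a proof: in particular your item (i) defers entirely to a ``log version of the nearby-cycle theorem'' without indicating why the hypotheses (log smoothness away from a controlled locus, behaviour at the toroidal points) are met for a toric degeneration in the sense of \cite{GS3}, and item (ii) elides the nontrivial point of why the angular directions from the Kato--Nakayama construction and the moment-map torus directions combine correctly over lower-dimensional strata of $B$. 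Since the paper itself only cites \cite{AS} for these facts, your sketch is adequate as a pointer, but you should be aware that the actual work lives in that reference.
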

It is natural to ask if given an integral affine manifold with singularities $B$, for every choice of lifted gluing data determining $X_0$, the Kato--Nakayama space $X_0^{\mathrm{KN}}(1)$ can be obtained as a topological Calabi--Yau compactification analogous to \cite{GrossTopology}. Conjecturally, such a compactification exists -- see \cite[Introduction]{GrossBB}. In dimension three, one can directly compare the local models the fibrations studied by Gross in \cite{GrossBB, GrossSLagI} with the fibrations on Kato--Nakayama spaces studied in~\cite{AS}, to show that the Calabi--Yau compactifications studied in \cite{GrossTopology} are homeomorphic to $X_0^{\mathrm{KN}}(1)$, where $X_0$ is obtained from the \emph{trivial gluing data} \cite[Remark~$4.16$]{AS}. Note that the real locus in fibrations obtained from non-trivial gluing data typically have very different topologies, as studied in \cite[\S$5.3$]{AS}. Investigating a sequence, similar to \cite[Theorem~$1$]{CBM}, that also incorporates gluing data on $B$, to be able to capture the mod $2$ cohomology of the real loci in the case of non-trivial gluing data, is focus of future work.

%\input{EquivariantLocalization}

%-------------------------------------------------------------------------------
\bibliographystyle{plain}
\bibliography{bibliography}

\begin{thebibliography}{10}

\bibitem{AP2}
H.~Arg{\"u}z and T.~Prince.
\newblock Tropical cycles for real {L}agrangians and {H}odge numbers.
\newblock In preperation.

\bibitem{AS}
H.~Arg{\"u}z and B.~Siebert.
\newblock On the real locus in the {K}ato--{N}akayama space of logarithmic
  spaces with a view towards toric degenerations.
\newblock \href{https://arxiv.org/abs/1610.07195v2}{\texttt{arXiv:1610.07195v2
  [math.AG]}}, 2016.

\bibitem{DBranes09}
Paul~S. Aspinwall, Tom Bridgeland, Alastair Craw, Michael~R. Douglas, Anton
  Kapustin, Gregory~W. Moore, Mark Gross, Graeme Segal, Bal{\'a}zs
  Szendr{\"o}i, and P.~M.~H. Wilson.
\newblock {\em Dirichlet branes and mirror symmetry}, volume~4 of {\em Clay
  Mathematics Monographs}.
\newblock AMS, Providence, RI, 2009.

\bibitem{atiyahequivariant}
MF~Atiyah and G~Segal.
\newblock Equivariant cohomology and localization, lecture notes, 1965.

\bibitem{AM}
L.~Auslander and L.~Markus.
\newblock Holonomy of flat affinely connected manifolds.
\newblock {\em Ann. of Math. (2)}, 62:139--151, 1955.

\bibitem{A}
Louis Auslander.
\newblock The structure of complete locally affine manifolds.
\newblock {\em Topology}, 3(suppl. 1):131--139, 1964.

\bibitem{B}
Victor~V. Batyrev.
\newblock Dual polyhedra and mirror symmetry for {C}alabi-{Y}au hypersurfaces
  in toric varieties.
\newblock {\em J. Algebraic Geom.}, 3(3):493--535, 1994.

\bibitem{BB}
Victor~V. Batyrev and Lev~A. Borisov.
\newblock Mirror duality and string-theoretic {H}odge numbers.
\newblock {\em Invent. Math.}, 126(1):183--203, 1996.

\bibitem{BBci}
Victor~V. Batyrev and Lev~A. Borisov.
\newblock On {C}alabi-{Y}au complete intersections in toric varieties.
\newblock In {\em Higher-dimensional complex varieties ({T}rento, 1994)}, pages
  39--65. de Gruyter, Berlin, 1996.

\bibitem{Bihan}
F.~Bihan.
\newblock Asymptotic behaviour of {B}etti numbers of real algebraic surfaces.
\newblock {\em Comment. Math. Helv.}, 78(2):227--244, 2003.

\bibitem{biss2004mod2}
Daniel Biss, Victor~W Guillemin, and Tara~S Holm.
\newblock The mod $2$ cohomology of fixed point sets of anti-symplectic
  involutions.
\newblock {\em Advances in mathematics}, 185(2):370--399, 2004.

\bibitem{borel2016seminar}
Armand Borel.
\newblock {\em Seminar on Transformation Groups.(AM-46)}, volume~46.
\newblock Princeton University Press, 2016.

\bibitem{MAGMA}
Wieb Bosma, John Cannon, and Catherine Playoust.
\newblock The {M}agma algebra system. {I}. {T}he user language.
\newblock {\em J. Symbolic Comput.}, 24(3-4):235--265, 1997.
\newblock Computational algebra and number theory (London, 1993).

\bibitem{Bredon}
Glen~E. Bredon.
\newblock {\em Sheaf Theory}.
\newblock Springer-Verlag, 1997.

\bibitem{CBM}
R.~Casta\~no Bernard and D.~Matessi.
\newblock The fixed point set of anti-symplectic involutions of {L}agrangian
  fibrations.
\newblock {\em Rend. Semin. Mat. Univ. Politec. Torino}, 68(3):235--250, 2010.

\bibitem{CBM3}
Ricardo Casta\~no Bernard and Diego Matessi.
\newblock Lagrangian 3-torus fibrations.
\newblock {\em J. Differential Geom.}, 81(3):483--573, 2009.

\bibitem{CBM2}
Ricardo Casta\~no Bernard and Diego Matessi.
\newblock Conifold transitions via affine geometry and mirror symmetry.
\newblock {\em Geom. Topol.}, 18(3):1769--1863, 2014.

\bibitem{CBMS}
Ricardo Casta\~no Bernard, Diego Matessi, and Jake~P. Solomon.
\newblock Symmetries of {L}agrangian fibrations.
\newblock {\em Adv. Math.}, 225(3):1341--1386, 2010.

\bibitem{cushman}
Richard~H Cushman and Larry~M Bates.
\newblock {\em Global aspects of classical integrable systems}, volume~94.
\newblock Springer, 1997.

\bibitem{D80}
J.~J. Duistermaat.
\newblock On global action-angle coordinates.
\newblock {\em Comm. Pure Appl. Math.}, 33(6):687--706, 1980.

\bibitem{FOOO}
Kenji Fukaya, Yong-Geun Oh, Hiroshi Ohta, and Kaoru Ono.
\newblock Antisymplectic involution and {F}loer cohomology.
\newblock {\em Geom. Topol.}, 21(1):1--106, 2017.

\bibitem{Penka}
Penka Georgieva.
\newblock Open {G}romov-{W}itten disk invariants in the presence of an
  anti-symplectic involution.
\newblock {\em Adv. Math.}, 301:116--160, 2016.

\bibitem{GP90}
B.~R. Greene and M.~R. Plesser.
\newblock Duality in {C}alabi-{Y}au moduli space.
\newblock {\em Nuclear Phys. B}, 338(1):15--37, 1990.

\bibitem{GHJ}
M.~Gross, D.~Huybrechts, and D.~Joyce.
\newblock {\em Calabi-{Y}au manifolds and related geometries}.
\newblock Universitext. Springer-Verlag, Berlin, 2003.
\newblock Lectures from the Summer School held in Nordfjordeid, June 2001.

\bibitem{GrossSLagI}
Mark Gross.
\newblock Special {L}agrangian fibrations. {I}. {T}opology [ {MR}1672120
  (2000e:14066)].
\newblock In {\em Winter {S}chool on {M}irror {S}ymmetry, {V}ector {B}undles
  and {L}agrangian {S}ubmanifolds ({C}ambridge, {MA}, 1999)}, volume~23 of {\em
  AMS/IP Stud. Adv. Math.}, pages 65--93. Amer. Math. Soc., Providence, RI,
  2001.

\bibitem{GrossTopology}
Mark Gross.
\newblock Topological mirror symmetry.
\newblock {\em Invent. Math.}, 144(1):75--137, 2001.

\bibitem{GrossBB}
Mark Gross.
\newblock Toric degenerations and {B}atyrev-{B}orisov duality.
\newblock {\em Math. Ann.}, 333(3):645--688, 2005.

\bibitem{GHS}
Mark Gross, Paul Hacking, and Bernd Siebert.
\newblock Theta functions on varieties with effective anti-canonical class.

\bibitem{GS4}
Mark Gross and Bernd Siebert.
\newblock Affine manifolds, log structures, and mirror symmetry.
\newblock {\em Turkish J. Math.}, 27(1):33--60, 2003.

\bibitem{GS1}
Mark Gross and Bernd Siebert.
\newblock Mirror symmetry via logarithmic degeneration data. {I}.
\newblock {\em J. Differential Geom.}, 72(2):169--338, 2006.

\bibitem{GS2}
Mark Gross and Bernd Siebert.
\newblock From real affine geometry to complex geometry.
\newblock {\em Ann. of Math. (2)}, 174(3):1301--1428, 2011.

\bibitem{GS3}
Mark Gross and Bernd Siebert.
\newblock An invitation to toric degenerations.
\newblock In {\em Surveys in differential geometry. {V}olume {XVI}. {G}eometry
  of special holonomy and related topics}, volume~16 of {\em Surv. Differ.
  Geom.}, pages 43--78. Int. Press, Somerville, MA, 2011.

\bibitem{HZci}
Christian Haase and Ilia Zharkov.
\newblock Integral affine structures on spheres: complete intersections.
\newblock {\em Int. Math. Res. Not.}, (51):3153--3167, 2005.

\bibitem{How08}
Valerie Hower.
\newblock Hodge spaces of real toric varieties.
\newblock {\em Collect. Math.}, 59(2):215--237, 2008.

\bibitem{hsiang2012cohomology}
Wu~Yi Hsiang.
\newblock {\em Cohomology theory of topological transformation groups},
  volume~85.
\newblock Springer Science \& Business Media, 2012.

\bibitem{GKZ}
A.~V. I.~M~Gelfand, A.V~Zelevinski and M.M Kapranov.
\newblock Equations of hypergeometric type and {N}ewton polyhedra.
\newblock {\em Dokl. Akad. Nauk SSSR}, 300(3):529--534, 1988.

\bibitem{itenbergOne}
Ilia Itenberg.
\newblock Topology of real algebraic {$T$}-surfaces.
\newblock volume~10, pages 131--152. 1997.
\newblock Revista Matem\'{a}tica de la Universidad Complutense de Madrid.

\bibitem{TropicalHomology}
Ilia Itenberg, Ludmil Katzarkov, Grigory Mikhalkin, and Ilia Zharkov.
\newblock Tropical homology.
\newblock {\em Mathematische Annalen}, 374(1-2):963--1006, 2019.

\bibitem{itenbergTwo}
Ilia Itenberg and Oleg Viro.
\newblock Asymptotically maximal real algebraic hypersurfaces of projective
  space.
\newblock In {\em Proceedings of G{\"o}kova Geometry/Topology conference},
  pages 91--105. Citeseer, 2006.

\bibitem{K}
Kazuya Kato.
\newblock Logarithmic structures of {F}ontaine-{I}llusie.
\newblock In {\em Algebraic analysis, geometry, and number theory ({B}altimore,
  {MD}, 1988)}, pages 191--224. Johns Hopkins Univ. Press, Baltimore, MD, 1989.

\bibitem{KN}
Kazuya Kato and Chikara Nakayama.
\newblock Log {B}etti cohomology, log \'etale cohomology, and log de {R}ham
  cohomology of log schemes over {${\bf C}$}.
\newblock {\em Kodai Math. J.}, 22(2):161--186, 1999.

\bibitem{O}
Arthur Ogus.
\newblock {\em Lectures on logarithmic algebraic geometry}, volume 178.
\newblock Cambridge University Press, 2018.

\bibitem{PSW}
R.~Pandharipande, J.~Solomon, and J.~Walcher.
\newblock Disk enumeration on the quintic 3-fold.
\newblock {\em J. Amer. Math. Soc.}, 21(4):1169--1209, 2008.

\bibitem{Prince}
T.~Prince.
\newblock Project's github page.
\newblock
  \newline\href{https://github.com/T-Prince/Real-Lagrangian/blob/master/triple_intersection.m}{\texttt{https://github.com/T-Prince/Real-Lagrangian/blob/master/triple\_intersection.m}}.

\bibitem{quillen1971spectrum}
Daniel Quillen.
\newblock The spectrum of an equivariant cohomology ring: I.
\newblock {\em Annals of Mathematics}, pages 549--572, 1971.

\bibitem{ArthurKirstin}
Arthur Renaudineau and Kristin Shaw.
\newblock Bounding the {B}etti numbers of real hypersurfaces near the tropical
  limit.
\newblock {\em arXiv preprint arXiv:1805.02030}, 2018.

\bibitem{Ruan}
Wei-Dong Ruan.
\newblock Lagrangian torus fibration of quintic hypersurfaces. {I}. {F}ermat
  quintic case.
\newblock In {\em Winter {S}chool on {M}irror {S}ymmetry, {V}ector {B}undles
  and {L}agrangian {S}ubmanifolds ({C}ambridge, {MA}, 1999)}, volume~23 of {\em
  AMS/IP Stud. Adv. Math.}, pages 297--332. Amer. Math. Soc., Providence, RI,
  2001.

\bibitem{Ruan2}
Wei-Dong Ruan.
\newblock Lagrangian torus fibration of quintic {C}alabi-{Y}au hypersurfaces.
  {II}. {T}echnical results on gradient flow construction.
\newblock {\em J. Symplectic Geom.}, 1(3):435--521, 2002.

\bibitem{Ruan3}
Wei-Dong Ruan.
\newblock Lagrangian torus fibration of quintic {C}alabi-{Y}au hypersurfaces.
  {III}. {S}ymplectic topological {SYZ} mirror construction for general
  quintics.
\newblock {\em J. Differential Geom.}, 63(2):171--229, 2003.

\bibitem{ruddat2019period}
Helge Ruddat and Bernd Siebert.
\newblock Period integrals from wall structures via tropical cycles, canonical
  coordinates in mirror symmetry and analyticity of toric degenerations.
\newblock {\em arXiv preprint arXiv:1907.03794}, 2019.

\bibitem{sepe2013universal}
Daniele Sepe.
\newblock Universal {L}agrangian bundles.
\newblock {\em Geometriae Dedicata}, 165(1):53--93, 2013.

\bibitem{Solomon}
Jake~P. Solomon.
\newblock {\em Intersection theory on the moduli space of holomorphic curves
  with {L}agrangian boundary conditions}.
\newblock ProQuest LLC, Ann Arbor, MI, 2006.
\newblock Thesis (Ph.D.)--Massachusetts Institute of Technology.

\bibitem{SYZ}
Andrew Strominger, Shing-Tung Yau, and Eric Zaslow.
\newblock Mirror symmetry is {$T$}-duality.
\newblock {\em Nuclear Phys. B}, 479(1-2):243--259, 1996.

\bibitem{Viro}
Oleg Viro.
\newblock Patchworking real algebraic varieties.
\newblock {\em arXiv preprint math/0611382}, 2006.

\bibitem{zung}
Nguyen~Tien Zung.
\newblock Symplectic {T}opology of {I}ntegrable {H}amiltonian {S}ystems, ii:
  {T}opological {C}lassification: {D}edicated to {P}rofessor {C}harles-{M}ichel
  {M}arle.
\newblock {\em Compositio Mathematica}, 138(2):125--156, 2003.

\end{thebibliography}
%-------------------------------------------------------------------------------

\end{document}